\crefname{hypothesis}{Hypothesis}{Hypotheses}
\title{High-Order Integration on regular triangulated manifolds reaches Super-Algebraic Approximation Rates through Cubical Re-parameterizations
\thanks{Submitted to the editors DATE.
\funding{This work was partially funded by the Center of Advanced Systems Understanding (CASUS), financed by Germany's Federal Ministry of Education and Research (BMBF) and by the Saxon Ministry for Science, Culture and Tourism (SMWK) with tax funds on the basis of the budget approved by the Saxon State Parliament.}}}
\author{Gentian Zavalani \footnotemark[3]  \thanks{Center for Advanced Systems Understanding (CASUS), G\"{o}rlitz, Germany. \\
Email: g.zavalani@hzdr.de
}
\and 
Oliver Sander\thanks{Technische Universit\"{a}t Dresden,
Faculty of Mathematics, Dresden, Germany. \\ Email: oliver.sander@tu-dresden.de}
\and Michael Hecht\footnotemark[2] \thanks{ University of Wroc\l aw, Mathematical Institute,  Email: m.hecht@hzdr.de}}
\definecolor{OurRed}{rgb}{0.64, 0.30, 0.30}
\DeclarePairedDelimiter{\norm}{\lVert}{\rVert}
\newcommand{\R}{\mathbb{R}}
\newcommand{\N}{\mathbb{N}}
\newcommand{\dist}{\mathrm{dist}}
\newcommand{\Cheb}{\mathrm{Cheb}}
\newcommand{\Oc}{\mathcal{O}}
\newcommand{\ee}{\varepsilon}
\newcommand{\li}{\left}
\newcommand{\re}{\right}
\newcommand{\vol}{\mathrm{vol}}
\DeclareMathOperator*{\argmin}{arg\,min}
\newcommand{\p}{\partial}
\begin{document}

\maketitle

\begin{abstract}
We present a novel methodology for deriving high-order volume elements (HOVE) designed for the integration of scalar functions over regular embedded manifolds.  For constructing  HOVE we introduce  \emph{square-squeezing}---a homeomorphic multilinear hypercube-simplex transformation---reparametrizing an initial flat triangulation of the manifold to a cubical mesh.
By employing square-squeezing, we approximate the integrand and the volume element for each hypercube domain of the reparameterized mesh through interpolation in Chebyshev–Lobatto grids. This strategy circumvents the Runge phenomenon, replacing the initial integral with a closed-form expression that can be precisely computed by high-order quadratures.

We prove novel bounds of the integration error in terms of the $r^\text{th}$-order total variation of the integrand and the surface parameterization, predicting high algebraic approximation rates that scale solely with the interpolation degree and not, as is common, with the average simplex size. For smooth integrals whose total variation is constantly bounded with increasing $r$, the estimates prove the integration error to decrease even exponentially,  
while mesh refinements are limited to achieve algebraic rates.
The resulting approximation power is demonstrated in several numerical experiments, particularly showcasing p-refinements to overcome the limitations of h-refinements for highly varying smooth integrals.
\end{abstract}
\begin{keywords}
surface approximation, high-order integration, numerical quadrature, quadrilateral mesh
\end{keywords}

\begin{MSCcodes}
65D15, 65D30, 65D32
\end{MSCcodes}

\section{Introduction}

Given a compact, orientable, $d$-dimensional $C^{r+1}$-manifold $S$, $r \geq 0$,
embedded into some $m$-dimensional Euclidean space $0 \leq d \leq m$, and an integrable function
$f : S \to \R$ with $f \in C^r$, this article proposes a novel surface integral algorithm approximating the integral
\begin{equation}\label{eq:task}
    \int_{S} f(\mathrm{x})\,dS\,.
\end{equation}
Such integrals appear in \emph{geometric processing}~\cite{Lachaud16}, \emph{surface--interface and colloidal sciences} \cite{zhou2005surface}, as well as optimization of production processes \cite{Gregg_1967,riviere2009}. Especially, they are central in 
many areas of applied numerical analysis, whereas
finite element (volume) methods \cite{dziuk2013finite,hansbo2020analysis, heine2004isoparametric} and spectral methods \cite{fortunato2022high, hale2016fast,townsend2016fast} exploit them to solve partial differential equations on curved surfaces \cite{grande2018analysis}.
While spectral methods are capable of realizing much higher order approximations than finite element methods \cite{Lloyd3, ruiz2018nonuniform,hale2014fast,fortunato2020fast,townsend2018fast,olver2020fast,olver2013fast}, current research aims to make them accessible for applications such as \emph{active morphogenesis} \cite{mietke2019self}, \emph{free-surface flows} \cite{nestler2018orientational}, or \emph{interfacial transport problems} \cite{marschall2017transport,jankuhn2018incompressible,YANG09}.


In contrast to integration tasks on flat domains, when integrating over an
embedded manifold, the additional challenge of approximating the embedding has to be addressed. To do so, we assume that the integrand $f$ is fully known in the sense
that we can evaluate it precisely and with reasonable computational cost
at any point $\mathrm{x} \in S$.
The construction of HOVE further assumes the existence of a triangulation
of $S$, i.e., a finite family $\{\varrho_i\}$ of differentiable maps of a reference
simplex $\triangle_d$ into $S$ such that the images partition $S$ up to a
zero set. With such a triangulation, the surface integral becomes a sum over the simplices
\begin{equation}\label{eq:P1}
\int_S f\,dS
=
\sum_{i=1}^K \int_{\triangle_d} f(\varrho_i(\mathrm{x}))
\sqrt{\det((D\varrho_i(\mathrm{x}))^T D\varrho_i(\mathrm{x}))}\,d\mathrm{x}.
\end{equation}

The integrand involves the Jacobians $D\varrho_i$
of the parametrization functions, as part of the
volume element. However, in many practical
applications, these Jacobians are not known with any
reasonable accuracy. One case where this happens
is when $S = l^{-1}(0)$ is the level-set of a $C^r$-function $l : \R^{d+1}\to \R$.
Then, the parametrizations $\varrho_i$ can be evaluated
by numerically looking for zeros of $l$. Typically, one starts with an
approximate triangulation by flat simplices
in the surrounding space and computes the values
$\varrho_i(\mathrm{x})$ by the closest-point projection
\cite{geometricpde, Demlow_Dziuk}.
While the implicit function theorem allows  to compute the derivatives $D\varrho_i(\mathrm{x})$,
this does not lead to high-order results;
see Remark~\ref{rem:affine_mesh_with_projections}
below.

We prove and numerically demonstrate that polynomial interpolation of the $\varrho_i$ yields a more powerful alternate solution.
When replacing both the integrand $f$ and the parametrizations $\varrho_i$ by polynomial
approximations $Qf$, $Q\rho_i$ (not necessarily of the same degrees),
the surface integral becomes
\begin{equation}
\label{eq:approximation_overview}
\int_S f\,dS
\approx
\sum_{i=1}^K \int_{\triangle_d} Qf(\varrho_i(\mathrm{x}))
\sqrt{\det((DQ\varrho_i(\mathrm{x}))^T DQ\varrho_i(\mathrm{x}))}\,d\mathrm{x}.
\end{equation}
The right-hand side is a closed form expression that, even though it includes the square-root function, can be precisely computed by 
standard simplex quadrature rules as long as the volume
element stays away from zero; see Corollary~\ref{cor:pull}.
The dominant part of the integration error is induced by the approximation error of the
interpolation operator $Q$.

Classic interpolation by piecewise polynomials using total $l_1$-degree polynomial spaces
on each simplex leads to approximation rates
that are only algebraic with increasing mesh size $h>0$,
i.e., the error behaves like $\Oc(h^k)$ for
some $k \ge 1$ \cite{Ciarlet02,shewchuk2002good}.
Such algorithms are particularly suitable for
integrands with limited regularity. We, however,
are primarily interested in the high-regularity case,
by which we mean the existence of an $r \gg 0$
such that $S \in C^{r+1}$ and $f \in C^r$
are of at most polynomially growing \emph{$r^{\text{th}}$ total variations} $V_{f,r}$,  $V_{\varrho_i,r}\in  o(r^k)$, for some $k \in \N$. Hereby,  we understand $V_{f,r}$ and $V_{\varrho_i,r}$
in the sense of Vitali and Hardy--Krause (see Definition~\ref{def:total_variation}).
In this setup, we obtain high algebraic up to  exponential approximation rates, with increasing interpolation degrees,  analogous rates for the polynomial derivatives, and consequently,  the integration task.

\subsection{Contribution}\label{sec:contr} Given a compact, orientable, $d$-dimensional $C^{r+1}$-manifold $S$, $r \geq 0$,
embedded into some $m$-dimensional Euclidean space $d \leq m$
and an integrand $f : S \to \R$ in~$C^r$.

\begin{enumerate}[left=0pt,label=\textbf{C\arabic*)}]
\item \label{novelty:approximation}
We provide a novel method for approximating $S$
by a piecewise polynomial manifold.
Given a flat triangulation $T$ of $S$, on each simplex
we reparametrize by a particular hypercube--simplex
transformation $\sigma_* : \square_d \to \triangle_d$,
we term \emph{square-squeezing}. We then interpolate the $\varrho_i$ for each hypercube in
$k^\text{th}$-order tensorial Chebyshev--Lobatto nodes.
As well known, this avoids
Runge's phenomenon for regular interpolation tasks and have the advantage that the FFT is available for an $\mathcal{O}(N \log N)$ implementation of the differentiation process, and they also have slight advantages connected to their ability to approximate functions.

\item \label{novelty:quadrature}
Given \ref{novelty:approximation}, arbitrarily high-order volume elements (HOVE) can be constructed for each cube. When integrating  scalar functions
$f : S \to \R$, this results in numerical errors, rapidly decreasing with the order of the applied quadrature rule.
Possible options are tensorial Gauss--Legendre rules
or pull-backs of the symmetric
Gauss simplex rules~\cite{dunavant1985high}, whereas the latter
are more efficient, see Corollary~\ref{cor:pull}.

\item \label{novelty:error_bound}
In Theorem~\ref{Main.Thm} we prove a novel estimate for the
error $E(f,S)$ of HOVE.
Specifically, we show that
\begin{equation}\label{eq:est}
E(f,S)  \leq C\big(n^{-(r-d+1)}  + k^{-(r-d-1)}\big)\,, \quad  C=C(V_{f,r}, V_{\varrho_i,r}, S)>0\,,
\end{equation}
where $k$ is the polynomial degree used for approximating the geometry, while $n$ denotes the polynomial degree employed for interpolating the integrand $f$.
The constant $C$ depends
on the surface $S$ and on the $r^{\text{th}}$
total variations $V_{f,r}$, $V_{\varrho_i,r}$
of  the integrand $f$ and the parameterizations $\varrho_i$, respectively.
To the best of our knowledge, this estimate is the first one guaranteeing
convergence to the correct integral when increasing
the polynomial order alone.
If, in addition, the  $r^{\text{th}}$ total variations are uniformly bounded
\begin{equation}\label{eq:VB}
    \limsup_{r \to \infty} V_{f,r} < \infty \,,
    \qquad
    \limsup_{r \to \infty} V_{\varrho,r} < \infty \,,
\end{equation}
Equation~\eqref{eq:est} even implies that the integration error decreases exponentially
\begin{equation}
E(f,S)  \leq CR^{-\min\{n,k\}}\,, \quad\quad \text{for some} \quad  R>1 \,,  \quad C=C(f,S)>0\,.
\end{equation}
\end{enumerate}

We want to stress that prior estimates
for alternative surface quadrature methods  \cite{Demlow09, dziuk2013finite,Zavalani23} show only
\begin{equation}\label{eq:est2}
E(f,S)  \leq C(h^{n+1} + h^{k+1})\,, \quad  C=C(n,k,f,S)>0\,,
\end{equation}
where $h>0$ is the mesh size.
Here, the constant $C$ explicitly depends on the degrees $n$ and $k$.
Since
potentially $C(n,k,f,S) \to \infty$ with $n,k \to \infty$, in contrast to \eqref{eq:est}, no guarantees of higher accuracy or even convergence is given for $p$-refinements, increasing $n$, $k$.
Moreover, the approximation rate is only algebraic in the mesh size $h$.

Experiments in Section~\ref{sec:NUM} show the super-algebraic or even exponential approximation rates predicted by \eqref{eq:est}, suggesting HOVE to be the superior choice for regular integration tasks. In particular, HOVE resolves integration tasks of high variance, Section~\ref{sec:BC}, that are non-reachable by low--order methods, even when exploiting super-resolution meshes, potentially generated by $h$-refinements.

At this moment, HOVE is limited to scenarios where the integrand $f: S\to \R$ can be evaluated at any point $x \in S$, and the manifold $S$ is (implicitly) parameterized.
In our concluding thoughts, Section~\ref{sec:CON}, we sketch how recent results \cite{GPLS,REG_arxiv,drake2022implicit} allow to
overcome this limitation, making HOVE applicable for non-parametrized surfaces
and functions given only in specific sample points.

\subsection{Related work}\label{sec:relW}
The importance of computing integrals on manifolds
is reflected  in the large number of articles addressing this subject. Approaches might be divided into mesh-free  methods, requiring a partition of unity, and mesh-based methods. A comprehensive review of the entire literature is beyond the scope of this article. The following list highlights specific contributions that may directly relate to or complement our work.

\begin{enumerate}[left=0pt,label=\textbf{R\arabic*)}]
\item The strength of mesh-free approaches, such as \emph{moving least squares}, comes from their ability to approximate integrals with discontinuities for arbitrary  function data. However, limitations are the stability of the involved regression methods~\cite{platte:2011} and the computational cost for computing a proper partition of unity. We recommend Belytschko et al.  \cite{belytschko1996} for an excellent survey on the subject.

\item\label{Ray} Ray et al.~\cite{Ray2012} realise
\emph{High-Order Integration over Discrete (Triangulated) Surfaces} (IDS)
based on stabilized least squares, deriving $k^{\text{th}}$-order surface approximations. \linebreak
While the stabilized least-square regression avoids Runge's phenomenon the computational costs rapidly increase with the order of the approximation.
Recent extensions \cite{li2021compact}
address the task of computing integrals over non-parametrized surfaces.

\item Piecewise polynomial approximations of regular hyper-surfaces $S = l^{-1}(0)$
in $\R^3$ are studied by Dziuk and Elliot~\cite{dziuk2013finite}. Realizations are given by Demlow \cite{Demlow09}, Chien and Atkinson \cite{Chien1995, atkinson1995piecewise}, and Praetorius and Stenger  \cite{CurvedGrid}.
However, all approaches rest on interpolation in equidistant nodes
on simplices.
Consequently, they are sensitive to Runge's phenomenon and become unstable for high orders. An extended investigation of the error analysis provided by \cite{Demlow09, dziuk2013finite, CurvedGrid} is given in \cite{Zavalani23}.

\item \label{reeger} Reeger et al.~\cite{reeger2016numerical} propose to use local radial basis function-generated finite differences (RBD-FD) for efficiently generating quadrature weights for arbitrary node sets. This enables to approximate surface integrals for any given function data.
\end{enumerate}

While \ref{Ray} and \ref{reeger} address the harder problem of integrating $f$ based on samples of $f$
given at particular point sets,
even in the case of regular surface integrals all approaches are limited to achieve prior specified algebraic approximation rates.
In contrast, we prove HOVE's integration rates to be of high algebraic order, specifically depending on the instance's total variation, resulting in  super-algebraic up to exponential convergence for variationally bounded integration tasks.

\subsection{Notation} Throughout the article,
we denote with $\square_d=[-1,1]^d$ the closed $d$-dimensional standard hypercube, 
and with $\triangle_d = \{\mathrm{x} \in \R^d: x_1,\dots,x_d \geq 0 \,,\sum_{i=1}^d|x_i|\leq 1\}$ the standard $d$-simplex in $\R^d$.
For a set $U \subseteq \R^d$ we denote with $\mathring U$ its interior, with $\overline{U}$ its closure, and with $\p U = \overline{U}\setminus U$ its boundary.
The canonical basis of $\R^d$ is called $\{e_i\}_{i=1,\dots,d}$.
For vectors $\mathrm{x}, \mathrm{y} \in \R^d$ we denote by $\langle\mathrm{x},\mathrm{y}\rangle$ the standard Euclidean inner product and by $\|\mathrm{x}\|$
the corresponding norm.
Furthermore, we set
$\|\mathrm{x}\|_p = (\sum_{i=1}^d |x_i|^p)^{1/p}$ ,$1 \leq p < \infty$, the $l_p$-norm,
and $\|\mathrm{x}\|_\infty = \max_{i=1,\dots,d}|x_i|$.

Lets denote with $\N_{0}^d:=(\N\cup\{0\})^d$, we define monomials as $\mathrm{x}^{\boldsymbol{\alpha}} = \prod_{i=1}^d x_i^{\alpha_i}$, $\mathrm{x} \in \R^d$, $\boldsymbol{\alpha} \in \N_0^d$ and 
consider multi-index sets $A_{d,n,p}=\{\boldsymbol{\alpha} \in \N_{0}^d : \|\boldsymbol{\alpha}\|_p \leq n\}$, $1 \leq p \leq \infty$, inducing the real 
polynomial vector spaces $\Pi_{d,n,p} = \mathrm{span}\{\mathrm{x}^{\boldsymbol{\alpha}}\}_{\boldsymbol{\alpha} \in A_{d,n,p}}$
of $l_p$-degree $n$. 
In contrast to the common \emph{total $l_1$-degree}
polynomial space (also known as the full polynomial space),
the vector space of all real polynomials of \emph{maximum $l_\infty$-degree} $n$ in $d$ variables
will be central.  We will denote this space by
$\Pi_{d,n} = \Pi_{d,n,\infty} \mathrm{span}\{\mathrm{x}^{\boldsymbol{\alpha}}\}_{\boldsymbol{\alpha}\in A_{d,n}}$, $A_{d,n} =A_{d,n,\infty}$.

By $L^2(\square_d)=\{f : \square_d \to \R : \int_{\square_d}|f(\mathrm{x})|^2d\mathrm{x} < \infty\}$
we denote the Hilbert space of
square-Lebesgue-integrable functions.
The Banach space
of $r$-times continuously differentiable functions on $\mathring \square_d$
will be called $C^r(\square_d)$, $r\in\N$,
with norm
\begin{equation}
    \|f\|_{C^r(\square_d)}
    = \sum_{\substack{\boldsymbol{\alpha} \in \N^d \\ \|\boldsymbol{\alpha}\|_1\leq k}} \; \sup_{\mathrm{x}\in \mathring \square_d}|\p^{\boldsymbol{\alpha}} f(\mathrm{x})|\,, \quad \p^{\boldsymbol{\alpha}} f(\mathrm{x}) = \p_{x_1}^{\alpha_1}\cdots\p_{x_d}^{\alpha_d} f(\mathrm{x})\,.
\end{equation}

Finally, we introduce the main vehicle to quantify
regularity of the integrands used in this work.
\begin{definition}[$r^\text{th}$-order total variation]
\label{def:total_variation}
Let $r \geq 0$, $f: \square_d \to \R$ and its derivatives through $\p^{\boldsymbol{\beta}} f$, $\boldsymbol{\beta} = (r+1,\dots,r+1)$ be absolutely continuous (differentiable almost everywhere). We define the $r^{\text{th}}$ total variation $V_{f,r}$ as
\begin{equation}\label{eq:var}
   V_{f,r}=\max_{\substack{\boldsymbol{\beta} \in \N^d \\ \|\boldsymbol{\beta}\|_\infty \leq r+1}}\int_{\square_d} |\p^{\boldsymbol{\beta}}f(\mathrm{x})| d\mathrm{x} \,, 
\end{equation}
and refer $f$ as having bounded $r^{\text{th}}$ total variation, whenever $V_{f,r} < \infty$ exists.
\end{definition}
This definition recaptures the notion of Vitali and Hardy--Krause \cite{Clarkson1933OnDR,Aistleitner14,Owen2004}.

\section{Integrals based on triangulations}
Simplex meshes are 
typically much easier to obtain in practice~\cite{Persson} than cube meshes, 
consequently serving as our starting point here.

\subsection{Nonconforming simplex meshes}

Integrals over a manifold $S$ can be rewritten as integrals
over simplices if the manifold is equipped with a triangulation.

\begin{definition}[Nonconforming triangulation]
\label{def:triangulation}
We call nonconforming triangulation of $S$ a finite family
of maps $\varrho_i$ and corresponding sets $V_i \subset S$, $ i=1,\dots,K$
such that
\begin{equation*}
    \varrho_i : \triangle_d \to V_i \subseteq S\,, \quad \bigcup_{i=1}^K
    \overline{V_i} = S\,,
    \quad \bigcap_{i\neq j} V_i \cap \ V_j = \emptyset\,,
\end{equation*}
and the restrictions of the $\varrho_i$ to the interior
$\mathring \triangle_d$ are diffeomorphisms.
\end{definition}

\begin{remark}
Note that we do not require compatibility conditions between
adjacent simplices, which makes this notion of triangulation more general than
the common one from~\cite{thurston2014three}.
\end{remark}

For immersed manifolds $S \subset \R^m$ we will write
$D\varrho_i(\mathrm{x}) : \R^d \to \R^m$ for the
Jacobian of the parametrization $\varrho_i$ at $\mathrm{x}$, enabling to   compute integrals
simplex by simplex.

\begin{lemma}\label{prob:1}
Given a nonconforming triangulation of $S$, the integral
of an integrable function $f:S \to \R$ is
\begin{equation}\label{eq:SI}
    \int_S f dS = \sum_{i=1}^K\int_{\triangle_d} f(\varrho_i(\mathrm{x})) g_i(\mathrm{x})\,d\mathrm{x}\,,
\end{equation}
where $g_i(\mathrm{x})=\sqrt{\det((D\varrho_i(\mathrm{x}))^T D\varrho_i(\mathrm{x}))}$
is the volume element.
\end{lemma}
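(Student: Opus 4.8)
The plan is to reduce the manifold integral to a finite sum of ordinary Lebesgue integrals over the reference simplex by combining a measure-theoretic decomposition of $S$ with the classical area formula applied on each parametrized patch. Throughout, recall that the surface measure $dS$ is the restriction to $S$ of the $d$-dimensional Hausdorff measure $\mathcal{H}^d$ on $\R^m$.

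First I would decompose $S$. By Definition~\ref{def:triangulation} the closed sets $\overline{V_i}$ cover $S$ while the open sets $\mathring V_i$ are pairwise disjoint, so it suffices to show that $\{\mathring V_i\}_{i=1}^K$ partitions $S$ up to an $\mathcal{H}^d$-null set, i.e.\ that $\mathcal{H}^d(\overline{V_i}\setminus\mathring V_i)=0$ and $\mathcal{H}^d(\mathring V_i\cap\mathring V_j)=0$ for $i\neq j$. The second identity is already part of the hypothesis; for the first I would use that $\varrho_i$ restricted to $\mathring\triangle_d$ is a diffeomorphism onto $\mathring V_i$ together with the fact that $\p\triangle_d$ is a Lebesgue null set in $\R^d$, whose image under the locally Lipschitz map $\varrho_i$ is therefore $\mathcal{H}^d$-null. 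Consequently $\int_S f\,dS=\sum_{i=1}^K\int_{\mathring V_i} f\,dS$, once integrability on each patch is established, which follows from the next step.

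Next I would invoke the area formula patchwise. On $\mathring\triangle_d$ the map $\varrho_i$ is a smooth diffeomorphism onto $\mathring V_i\subseteq S$ and $D\varrho_i(\mathrm{x}):\R^d\to\R^m$ has full rank $d$, so the Gram determinant $\det((D\varrho_i(\mathrm{x}))^TD\varrho_i(\mathrm{x}))$ is strictly positive and $g_i$ is a well-defined continuous positive function. The change-of-variables theorem for rectifiable sets then yields, for every $\mathcal{H}^d$-measurable $h:\mathring V_i\to\R$,
\[
\int_{\mathring V_i} h\,dS=\int_{\mathring\triangle_d} h(\varrho_i(\mathrm{x}))\,g_i(\mathrm{x})\,d\mathrm{x}.
\]
Taking $h=|f|$ shows that $f$ is integrable over $\mathring V_i$ if and only if $(f\circ\varrho_i)\,g_i$ is integrable over $\triangle_d$ (the boundary $\p\triangle_d$ being Lebesgue-null is irrelevant here), and taking $h=f$ gives the patchwise identity. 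Summing over $i=1,\dots,K$ and using the decomposition from the first step gives
\[
\int_S f\,dS=\sum_{i=1}^K\int_{\mathring V_i} f\,dS=\sum_{i=1}^K\int_{\triangle_d} f(\varrho_i(\mathrm{x}))\,g_i(\mathrm{x})\,d\mathrm{x},
\]
as claimed.

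The only genuinely delicate point is the decomposition step: one must make sure the overlaps of the patches and the images of the simplex boundaries are honestly $\mathcal{H}^d$-negligible, so that the surface integral really splits additively. Everything else is a direct application of the area formula for smooth parametrizations. In particular, the nonconformity of the triangulation causes no difficulty, precisely because the ``seams'' between adjacent simplices are lower-dimensional and hence carry no $dS$-mass.
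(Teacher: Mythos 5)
Your argument is correct, and it is exactly the standard justification the paper has in mind: Lemma~\ref{prob:1} is stated without proof there, being the usual decomposition of $S$ into the patches $\mathring V_i$ up to $\mathcal{H}^d$-null sets followed by the area formula on each patch. The one point worth making explicit is the regularity you quietly use when claiming $\varrho_i(\p\triangle_d)$ is $\mathcal{H}^d$-null: Definition~\ref{def:triangulation} only guarantees that $\varrho_i$ is a diffeomorphism on $\mathring\triangle_d$, so a priori $\varrho_i$ need not be Lipschitz up to the boundary and the image of the null set $\p\triangle_d$ could fail to be negligible. In the paper's actual setting this is harmless, since $\varrho_i=\pi_i\circ\tau_i$ with $\pi_i$ a $C^{r+1}$-embedding of the compact simplex $T_i$ (Remark~\ref{rem:affine_mesh_with_projections}), hence Lipschitz; if you want the lemma in the bare generality of Definition~\ref{def:triangulation}, you should either add that hypothesis or note that integrability of $f$ over $S$ is taken with respect to a measure for which the seams are already assumed negligible. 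Everything else, including the compactness argument identifying $\overline{V_i}$ with $V_i$ and the patchwise change of variables, is fine.
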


\begin{remark}[Closest-point projections]
\label{rem:affine_mesh_with_projections}
In practice, triangulations of an embedded
manifold $S$ are frequently given as a set of flat simplices
in the embedding space $\R^m$, together with local
projections from these simplices onto $S$ (Fig.~\ref{fig:app_frame}).  More formally, let
\begin{equation}\label{eq:triang}
 T_i \subseteq \R^m\,,
 \qquad
 i = 1,\dots,K
\end{equation}
be a set of $d$-simplices.
For each simplex $T_i$ we assume that there is a well-defined $C^{r+1}$-embedding $\pi_i : T_i \to S$ and an invertible affine transformation
$\tau_i : \triangle_d \to T_i$, such that the maps $\varrho_i=\pi_i \circ \tau_i : \triangle_d \to S$ form a triangulation in the sense of Definition~\ref{def:triangulation}. Commonly, \emph{the closest-point
 projection}
\begin{equation*}
    \pi^* : \mathcal{N}_{\delta}(S) \to S\,, \quad \pi^*(\mathrm{x}) = \argmin_{\mathrm{y} \in S}\dist(\mathrm{x},\mathrm{y})
\end{equation*}
serves as a realisation of the $\pi_i$.
Recall from \cite{geometricpde, Demlow_Dziuk} that
given an open neighborhood $\mathcal{N}_{\delta}(S)=\{\mathbf{x}\in \mathbb R^{m} :  \mathrm{dist}(\mathbf{x},S)<\delta\}$ of a $C^{r+1}$-surface $S$, $r \geq 2$ with $\delta$ bounded by the reciprocal of the maximum of all principal curvatures on $S$, the closest-point projection
is well-defined on $\mathcal{N}_{\delta}(S)$ and of regularity $\pi^*\in C^{r-1}(T,S)$. 

In practice, $\pi^*$ is usually approximated by $\pi^*(\mathrm{x})\approx\mathrm{x}-\mathrm{sd}(\mathrm{x})\eta(\mathrm{x})$ with $\mathrm{sd}$ being the \emph{signed} distance function to $S$ and $\eta(\mathrm{x})$ the normal field, extended to $\mathcal{N}_{\delta}(S)$.
In such cases, the Jacobian $D\pi^*$ is highly sensitive to the 
approximation quality of the normal field $\eta$. Apart from standard  
cases (e.g., spheres and tori), where $\eta$ is known analytically, high-order approximates of  $D\pi^*$ cannot be derived by this approach.
\end{remark}

\subsection{Re-parametrization over cubes}

The main difficulty in providing a numerical approximation of
\eqref{eq:SI} is obtaining the unknown derivatives $D\varrho_i$ that appear in the volume element.
One classic approach, followed also by \cite{dziuk2013finite}, is to replace the Jacobians
$D\varrho_i$ by the Jacobians of a polynomial
approximation, typically obtained by interpolation
on a set of interpolation nodes in $\triangle_d$.
However, the question of how to distribute nodes in simplices in order to enable stable high-order polynomial interpolation is still not fully answered \cite{CHEN1995405,taylor2000generalized}.

To circumvent these limitations, we instead propose to
re-parametrize the curved simplices $S_i$
over the $d$-dimensional hypercube $\square_d$.

\begin{figure}[t!]
    \centering
    \begin{tikzpicture}
        \node[inner sep=0pt] at (0,0) {\includegraphics[clip,width=1.0\columnwidth]{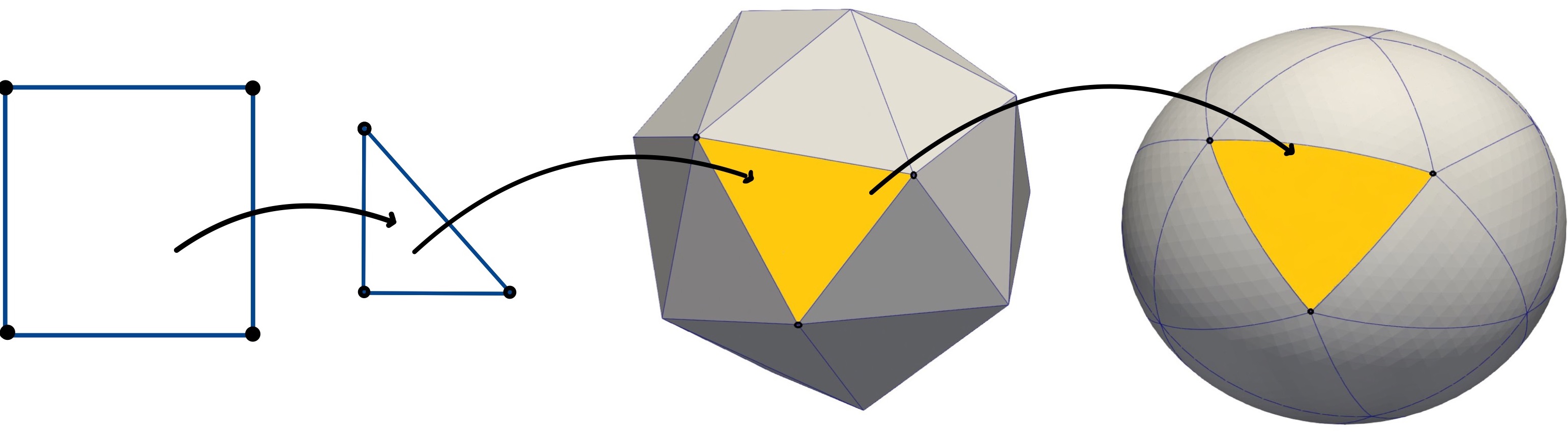}};
        
    
\node[anchor=north west] at (-0.2,0.3) {
$T_i\,$};
        \node[anchor=north west] at (2.3,1.5) {$\pi_i$};

        \node[anchor=north west] at (4.2,0.3) {$S_i\,$};

        \node[anchor=north west] at (-2.5,0.7) {$\tau_i$};
        \node[anchor=north west] at (-3.5,-0.20) {$\triangle_2$};
        \node[anchor=north west] at (-4.2,0.52) {$\sigma_i$};
        \node[anchor=north west] at (-6.3,-0.4) {$\square_2$};
    \end{tikzpicture}
    \caption{Construction of a surface parametrization over $\triangle_2$ by
closest-point projection from a piecewise affine approximate
mesh, and re-parametrization over the square $\Box_2$.}
    \label{fig:app_frame}
\end{figure}

\begin{definition}[Re-parametrization over cubes]
\label{def:cubical_reparametrization}
Let $\sigma : \square_d \to \triangle_d$ be a 
homeomorphism whose restriction
$\sigma_{| \mathring \square_d} : \mathring \square_d \to \mathring \triangle_2$ to the interior is a $C^r$-diffeomorphism, $r\geq 0$.  We call
\begin{equation}\label{eq:mesh}
\varphi_i : \square_d \to S\,,
\quad
\varphi_i = \varrho_i \circ \sigma = \pi_i \circ \tau_i\circ \sigma\,,
\quad
i=1,\dots,K
\,,
\end{equation}
a $r$-regular cubical re-parametrization whenver the coordinate functions of $\varphi_i$
are of bounded $r^{\text{th}}$ total variation, Definition~\ref{def:total_variation}, for all $i=1,\dots,K$.
\end{definition}

With such a re-parametrization, we effectively have
a hypercube mesh along with our simplex one, enabling us to construct
geometry approximations as described below.

\subsection{The square-squeezing re-parametrization map}

For the hypercube--simplex re-parametrization,
we propose to use the following multilinear map.

\begin{figure*}[!t]
  \begin{subfigure}{0.33\textwidth}
  \centering
    \includegraphics[width=1.0\linewidth]{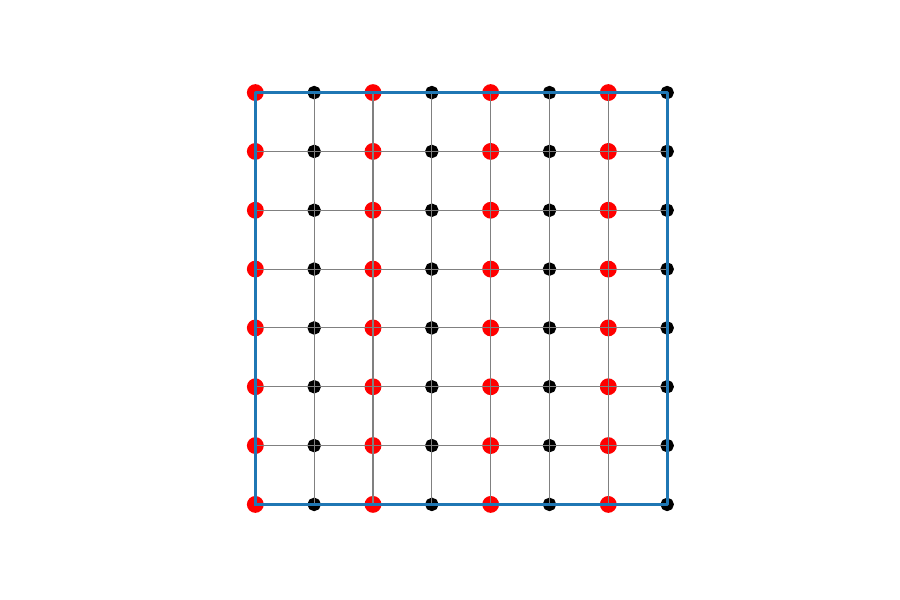}
  \end{subfigure}%
  \hfill
  \begin{subfigure}{0.33\textwidth}
  \centering
    \includegraphics[width=1.0\linewidth]{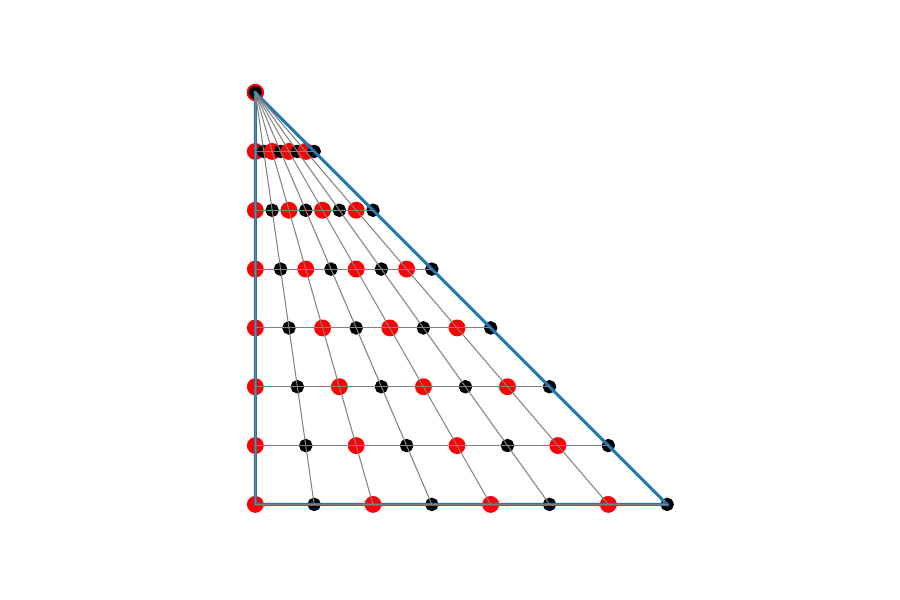}
  \end{subfigure}%
  \hfill
  \begin{subfigure}{0.33\textwidth}
  \centering
    \includegraphics[width=1.0\linewidth]{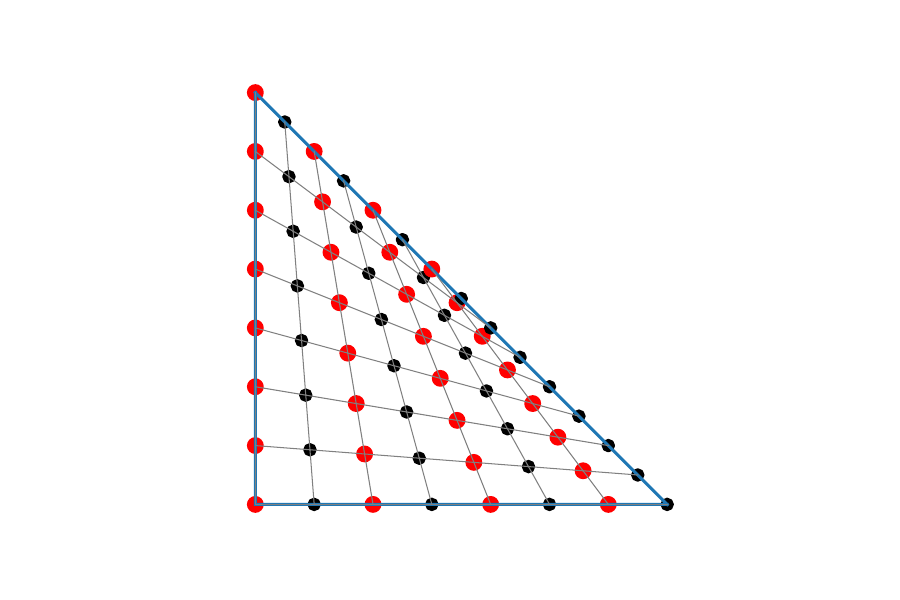}
  \end{subfigure}

  \begin{subfigure}{0.33\textwidth}
  \centering
    \includegraphics[width=1.0\linewidth]{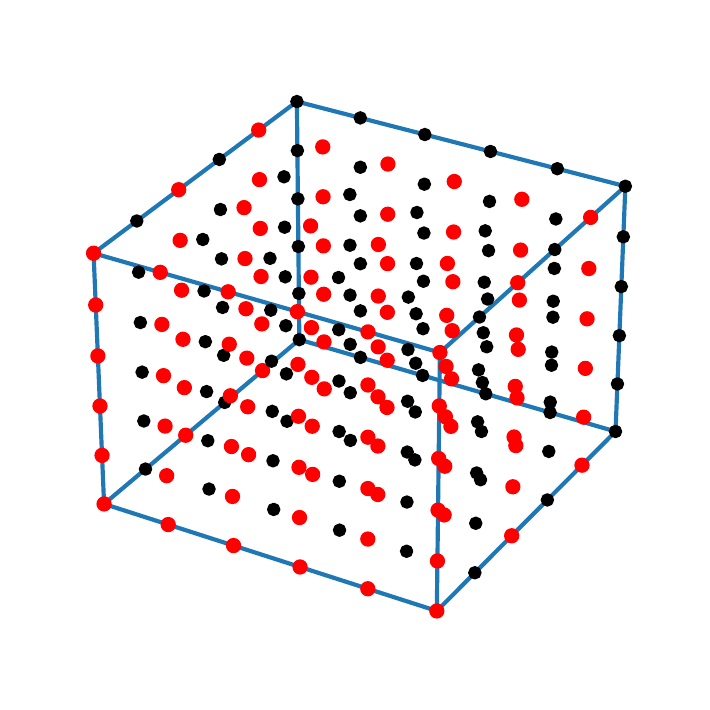}
    \caption{Standard hypercube}
    \label{fig:hypercube_grid}
  \end{subfigure}%
  \hfill
  \begin{subfigure}{0.33\textwidth}
  \centering
    \includegraphics[width=1.0\linewidth]{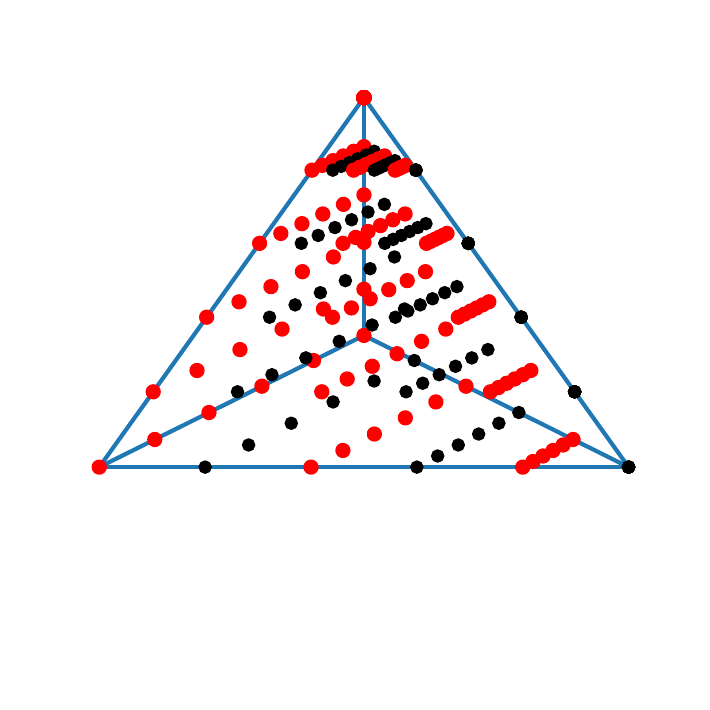}
    \caption{Duffy's transformation}
    \label{fig:duffy_grid}
  \end{subfigure}%
  \hfill
  \begin{subfigure}{0.33\textwidth}
  \centering
    \includegraphics[width=1.0\linewidth]{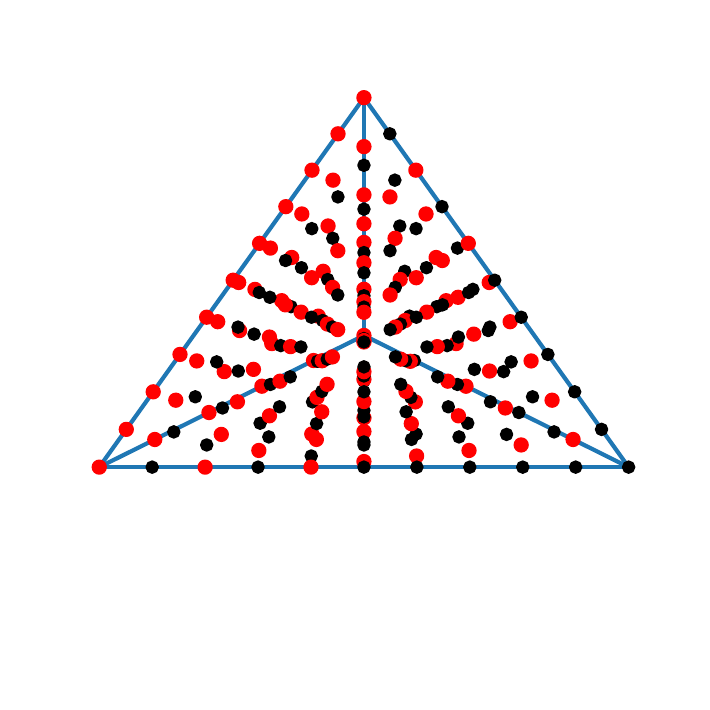}
    \caption{Square-squeezing}
    \label{fig:square_squeezing_grid}
  \end{subfigure}
  \caption{Multi-linear cube--simplex transformations for $d=2$ and $d=3$:
  Deformations of equidistant grids(\ref{fig:hypercube_grid}),
  under Duffy's transformation (\ref{fig:duffy_grid}),
  and square-squeezing (\ref{fig:square_squeezing_grid})}
  \label{fig:equi_tetra-cube3}
\end{figure*}

\begin{definition}[Square-squeezing]\label{def:ss}
Let $[0,1]^d$ be the $d$-dimensional unit cube,
with vertex set $A_{d,2}$. We call \emph{square-squeezing}
the map $\sigma_* : [0,1]^d \to \triangle_d$ that maps
the corners $\gamma \in A_{d,2} = \{ 0,1\}^d$ to
\begin{equation*}
\sigma_*(\gamma)
=
\begin{cases}
(0,\dots,0) & \text{if $\gamma = (0,\dots, 0)$} \\
\frac{\gamma}{\norm{\gamma}_1} & \text{otherwise},
\end{cases}
\end{equation*}
and uses multilinear interpolation for the rest of the domain.
\end{definition}

Note that all vertices of the simplex are mapped to themselves.
In other words:
$\sigma_* :   [0,1]^d \to \triangle_d$, $d \in \N$ is given by
\begin{equation}\label{eq:MLS}
    \sigma_*(\mathrm{x}) 
    =
    \sum_{\gamma \in A_{d,2}} \frac{\gamma}{|\gamma|} \Phi_\gamma\,, \quad  \Phi_\gamma = \prod_{i=1}^d x_i^{\gamma_i}(1-x_i)^{1-\gamma_i}\,.
\end{equation}
Since, this article operates on the standard cube $\square_d =[-1,1]^d\neq [0,1]^d$, we re-scale $\mathrm{x} \mapsto \tilde{\mathrm{x} }= (x_1 +1, \dots, x_m +1)/2$ for defining $\sigma_*(\mathrm{x})= \sigma_*(\tilde{\mathrm{x}})$ on $\square_d$. 
For illustration, we consider the important two-dimensional case in more detail:

\begin{remark}[Square-squeezing in two dimensions]\label{rem:ss}
We re-scale $\square_2$ to $[0,1]^2$ by setting $\tilde x_1 = (x_1+1)/2$, $\tilde x_2 = (x_2+1)/2$.
The square-squeezing transformation on $[0,1]^2$ becomes
\begin{equation}\label{eq.rec-tri}
   \sigma_* : [0,1]^2 \to \triangle_2\,,
    \quad
    \sigma_*(\tilde x_1, \tilde x_2)
    =
    \Big(\tilde x_1-\frac{\tilde x_1 \tilde x_2}{2},\; \tilde x_2-\frac{ \tilde x_1 \tilde x_2}{2}\Big)^T\,.
\end{equation}
The inverse map $\sigma^{-1}_* : \triangle_2 \to \square_2$ is given by
\begin{equation}\label{eq:inv_ss}
\sigma_*^{-1}(u,v)
=
\begin{pmatrix}
1+\left(u-v\right)-\sqrt{\left(u-v\right)^2+4\left(1-u-v\right)} \\
1-\left(u-v\right)-\sqrt{\left(u-v\right)^2+4\left(1-u-v\right)}
\end{pmatrix}.
\end{equation}
Both $\sigma_*$ and $\sigma_*^{-1}$ are continuous 
on $\square_d,\triangle_d$, respectively, showing  
square-squeezing to be a homeomorphism. The square-root term in \eqref{eq:inv_ss} is
smooth for all $(u,v)$ in $\triangle_2\setminus \{(\frac 12, \frac 12)\}$.
Hence, the restriction to the interior $\sigma_{* \, | \mathring \square_2} : \mathring \square_2 \to \mathring \triangle_2$
is a diffeomorphism.
Further, it is easy to show that $\|D\sigma_*\|_{C^0(\square_2)} \leq 1$. 
\end{remark}

\begin{remark}[Square-squeezing in three dimensions]
In dimension $d=3$, we term  $\sigma_* : \square_3 \to \triangle_3$, $(x,y,z)\mapsto (u,v,w)$ cube-squeezing, visualized in  Fig.~\ref{fig:equi_tetra-cube3}, and  explicitly given when re-scaling to $[0,1]^3$ by
\begin{equation*}
\sigma_* : [0,1]^3 \to \triangle_3,
\qquad
\sigma_*(\tilde x_1,\tilde x_2,\tilde x_3) = 
\begin{pmatrix}
\displaystyle (\tilde x_1-\frac{\tilde x_1 \tilde x_2}{2})(1-\frac{\tilde x_3}{2} +\frac{\tilde x_2 \tilde x_3}{6})
\vspace{8pt} 
\\
\vspace{8pt} 
\displaystyle (\tilde x_2-\frac{\tilde x_1 \tilde x_2}{2})(1-\frac{\tilde x_3}{2} +\frac{\tilde x_1 \tilde x_3}{6})\\
\vspace{8pt} 
\displaystyle (\tilde x_3-\frac{\tilde x_3 \tilde x_1}{2})(1-\frac{\tilde x_2}{2} + \frac{\tilde x_1 \tilde x_2}{6})
\end{pmatrix}.
\end{equation*}
\end{remark}

\begin{remark}\label{rem:duff} Note that
the commonly used \emph{Duffy transformation} \citep{Duffy82}
\begin{equation}\label{eq.duffy}
    \sigma_{\text{Duffy}} : \square_2 \to \triangle_2\,,
    \quad
    \sigma_{\text{Duffy}}(x,y) = \Big(\frac{1}{4}\left(1+x\right)\left(1-y\right),\frac{1+y}{2}\Big)\,,
\end{equation}
collapses one entire edge
of the square to the single vertex $(0,1)$. Thus, $\sigma_{\text{Duffy}}$
is a homeomorphism between $\mathring \square_2$ and
$\mathring \triangle_2$, but not between $\square_2$
and $\triangle_2$. Consequently, $\sigma_{\text{Duffy}}$ can only transform interpolation
or quadrature nodes from $\square_2$ to 
$\triangle_2$ and back if none of these nodes
is on the collapsed edge of $\square_2$
or the point $(0,1)$ of $\triangle_2$, excluding the case of 
\emph{Chebyshev-Lobatto nodes} \eqref{eq:CHEB} that are commonly considered as the optimal choice for interpolation tasks on hypercubes.

\end{remark}

\section{Approximation theory on hypercubes}

We now construct stable polynomial approximations of the
geometry functions $\varrho_i : \triangle_d \to \R^m$.
For this, we re-parametrize them to functions
on the cube $\varphi_i = \varrho_i \circ \sigma : \square_d \to \R^m$,
and approximate those using interpolation with
tensor-product polynomials. The resulting approximation
can be pulled back to the triangle domain
via $\sigma^{-1}: \triangle_d \to \square_d$.

\subsection{Interpolation in the hypercube}\label{sec:IP}

Throughout this section, $f$ is a generic
function on the standard square $\square_d$.
Afterwards, $f$ may play the role of  the coordinate functions of 
the geometry parametrizations $\varphi_i$
or pull-backs $f \circ \varphi_i$ of the integrand $f : S \to \R$.


We consider interpolation in tensorial grids.
\begin{definition}[Interpolation grid]
\label{def:EA}
For numbers $d, n \in \N$ let $P_1,\dots,P_d \subseteq [-1,1]$
be sets of size  $|P_i|=n+1$ each.
Then we call $G_{d,n}= \bigoplus_{i=1}^d P_i$ an interpolation grid.
For any multi-index ${\boldsymbol{\alpha}} \in A_{d,n}$
we denote with $p_{\boldsymbol{\alpha}}= (p_{\alpha_1,1},\dots,p_{\alpha_d,d}) \in G_{d,n}$, $p_{\alpha_i,i} \in P_i$, 
the corresponding grid node of $G_{d,n}$.
\end{definition}

We use such a grid to define the corresponding
interpolation operator $Q_{G_{d,n}}  : C^0(\square_d) \to \Pi_{d,n}$, $f \mapsto Q_{G_{d,n}} f$, uniquely determined by $Q_{G_{d,n}} f(p_{\boldsymbol{\alpha}}) = f(p_{\boldsymbol{\alpha}})$ 
for all $p_{\boldsymbol{\alpha}} \in G_{d,n}$. For an explicit representation, we generalize
one-dimensional Newton and Lagrange interpolation
to multivariate interpolation
on the grids $G_{d,n}$ \cite{cohen2,cohen3,PIP1,PIP2,MIP,IEEE}.

\begin{definition}[Lagrange and Newton polynomials]
\label{def:LN}
Let $G_{d,n} = \bigoplus_{i=1}^d P_i$ be an interpolation grid
indexed by a multi-index set $A_{d,n}$.
For each $\boldsymbol{\alpha} \in A_{d,n}$ the tensorial multivariate Lagrange polynomial is
\begin{equation}\label{eq:L}
  L_{\boldsymbol{\alpha}}(x)= \prod_{i=1}^d l_{\alpha_i,i} (x) \,, \quad l_{j,i} (x) = \prod_{k=0, k \not = j}^n \frac{x_i-p_{k,i}}{p_{j,i} - p_{k,i}}  \,.
\end{equation}
The $\boldsymbol{\alpha}$-th tensorial multivariate Newton polynomial is
\begin{equation}\label{eq:N}
 N_{\boldsymbol{\alpha}}(x) = \prod_{i=1}^d \prod_{j=0}^{\alpha_i}(x_i-p_{j,i})\,,\quad p_{j,i} \in P_i\,.
\end{equation}
\end{definition}
Both the Lagrange and Newton polynomials form bases of the polynomial space $\Pi_{d,n}$ induced by $A_{d,n}$.
As the $L_{\boldsymbol{\alpha}}$ satisfy
$ L_{\boldsymbol{\alpha}} (p_{\boldsymbol{\beta}}) = \delta_{\boldsymbol{\alpha},\boldsymbol{\beta}}$
for all $\boldsymbol{\alpha} \in A_{d,n}$, $p_{\boldsymbol{\beta}} \in G_{d,n}$
we deduce that
given a function $f :\square_
d\to \R$, the interpolant $Q_{G_{d,n}}f \in \Pi_{d,n}$
can be computed as
\begin{equation}\label{eq:NEWT}
    Q_{G_{d,n}}f = \sum_{\boldsymbol{\alpha} \in A_{d,n}} f(p_{\boldsymbol{\alpha}})L_{\boldsymbol{\alpha}} = \sum_{\boldsymbol{\alpha} \in A_{d,n}}b_{\boldsymbol{\alpha}} N_{\boldsymbol{\alpha}}\,,
\end{equation}
where the coefficients $b_{\boldsymbol{\alpha}} \in \R$
of the Newton interpolation can be computed
in closed form.
While Lagrange interpolation is primarily of theoretical
interest, the Newton form allows efficient and stable
evaluations of $Q_{G_{d,n}}f$ at any point $\mathrm{x} \in \square_d$. In particular, recent results in 
\cite{PIP1,PIP2,MIP,IEEE,minterpy} enable us to extend \eqref{eq:NEWT} for any choice of downward closed set $A\subseteq \N^d$, including the case of any $l_p$-degree e.g. total $l_1$-degree.

\begin{figure}[t!]
\begin{subfigure}{.45\textwidth}
  \centering\includegraphics[clip,width=1.0\columnwidth]{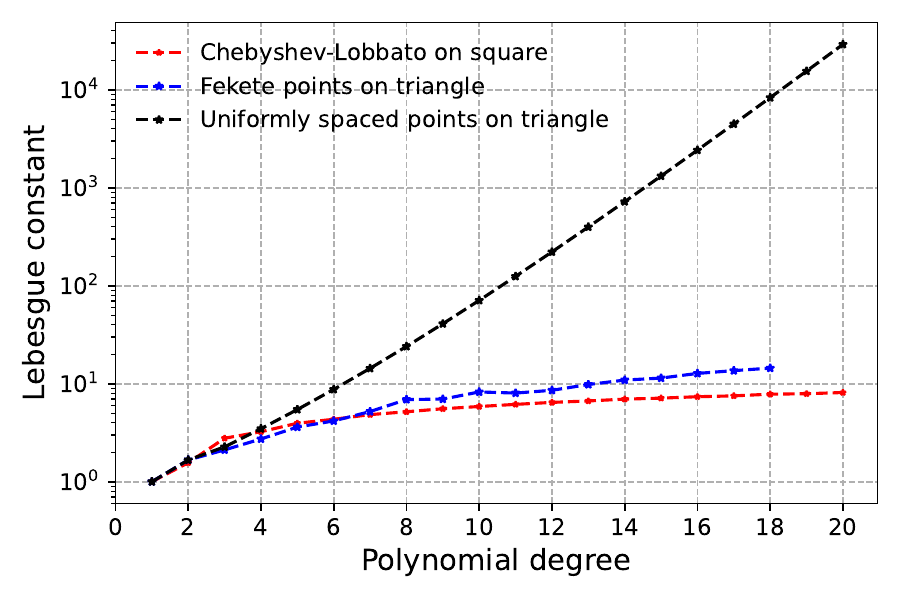}
      \caption{Lebesgue constant}
    \label{fig:T1}
\end{subfigure}%
\begin{subfigure}{0.27\textwidth}
    \includegraphics[width=\linewidth]{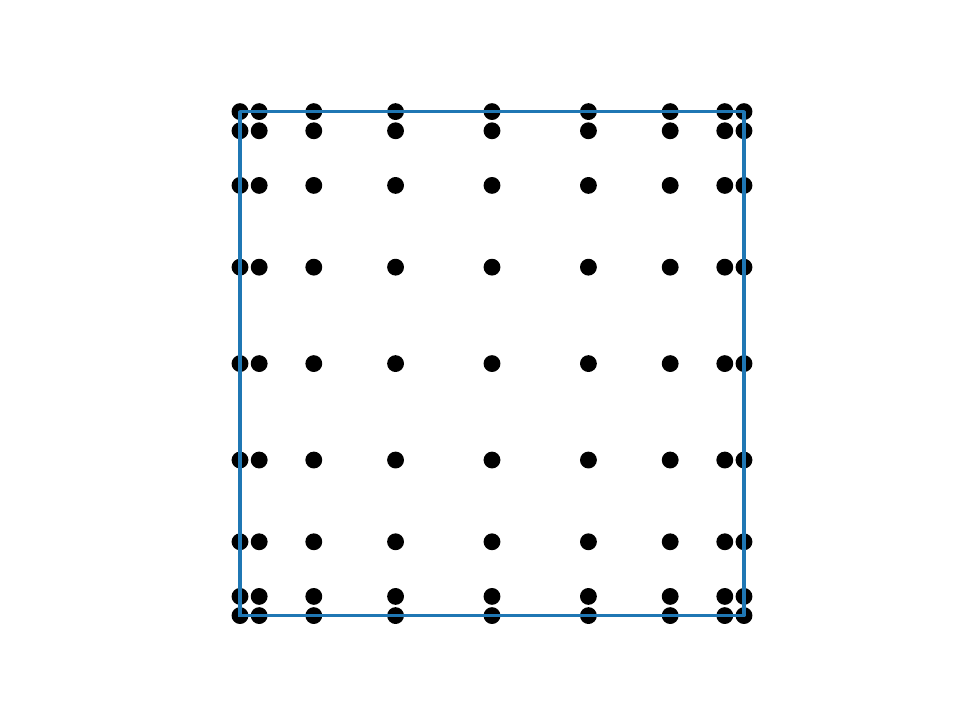}
    \caption{$\Cheb_{2,n}$}
      \label{fig:C1}
  \end{subfigure}%
  \begin{subfigure}{0.27\textwidth}
    \includegraphics[width=\linewidth]{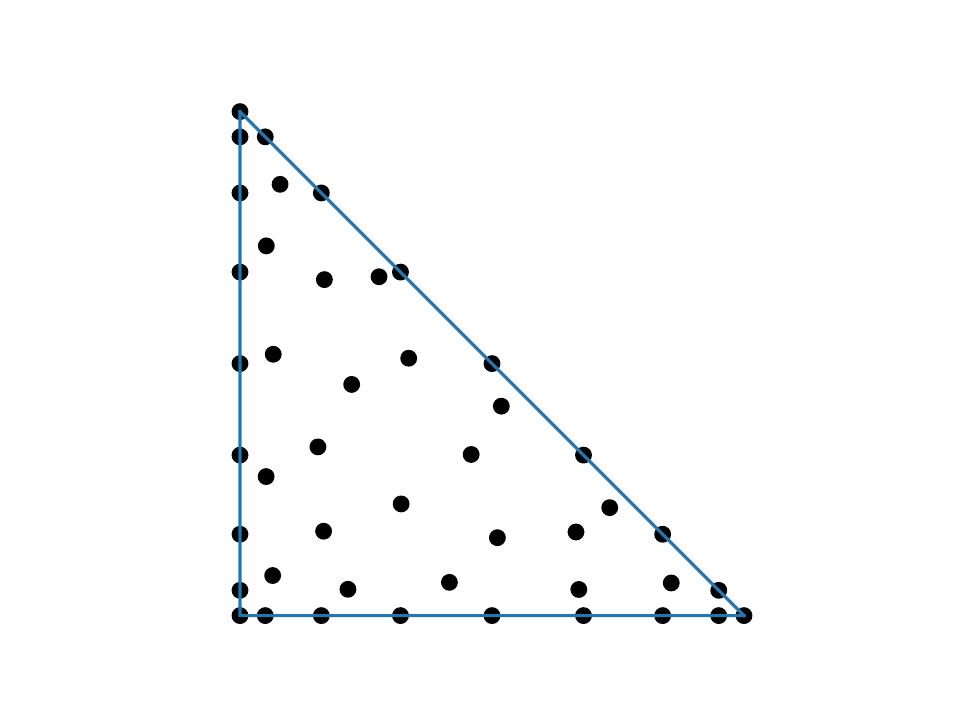}
    \caption{Fekete nodes}
    \label{feket_n=8}
  \end{subfigure}%

\caption{Lebesgue constants (\ref{fig:T1}) of uniformly spaced nodes on the triangle, Fekete nodes, and Chebyshev--Lobatto nodes (\ref{fig:C1}) a visualization of Chebyshev--Lobatto nodes and (\ref{feket_n=8}) Fekete nodes for $n=8$.
}
\label{fig:lebesgue_constant}
\end{figure}

The approximation power of polynomial interpolation is measured by
the \emph{Lebesgue constant}---the operator norm
of the interpolation operator
$Q_{G_{d,n}} : C^0(\square_d) \to \Pi_{d,n}$ given by 
\begin{equation}\label{eq:defLEB}
 \Lambda(G_{d,n})
 =
 \|Q_{G_{d,n}}\|
 = \sup_{ g\in C^0(\square_d)} \frac{\|Q_{G_{d,n}} g\|_{C^0(\square_d)}}{\|g\|_{C^0(\square_d)}}
             =  \Big\|\sum_{\boldsymbol{\alpha} \in A_{d,n}} |L_{\boldsymbol{\alpha}}|\Big\|_{C^0(\square_d)}\,.
\end{equation}
In the case of a one-dimensional interpolation domain $\square_1=[-1,1]$
and the Chebyshev--Lobatto grid
\begin{equation}\label{eq:CHEB}
   \Cheb_n = \li\{ \cos\Big(\frac{k\pi}{n}\Big) \; : \; 0 \leq k \leq n\re\}\,,
\end{equation}
the Lebesgue constant $\Lambda(\Cheb_n)$ increases slowly as $n \to \infty$.
Indeed,
\begin{equation}\label{eq:LEB}
 \Lambda(\Cheb_n)=\frac{2}{\pi}\big(\log(n+1) + \gamma +  \log(8/\pi)\big) + \Oc(1/n^2)\,,
\end{equation}
where $\gamma \approx  0.5772$ is the Euler--Mascheroni constant, see \cite{bernstein1931,ehlich,mccabe,rivlin,rivlin2,brutman}, surveyed by \cite{brutman2}, see also \cite{Trefethen2019}.
We extend this estimate to the $d$-dimensional case:
\begin{lemma}\label{lem:LEB}
The Lebesgue constant of the $d$-dimensional
Chebyshev--Lobatto grid
\begin{equation*}
 \Cheb_{d,n} = \bigoplus_{i=1}^d \Cheb_n
\end{equation*}
is $\Lambda(\Cheb_{d,n}) \leq  \Lambda(\Cheb_n)^d \in \Oc(\log(n+1)^d)
$.
\end{lemma}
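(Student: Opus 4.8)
The plan is to exploit the tensor-product structure of both the Chebyshev--Lobatto grid $\Cheb_{d,n}=\bigoplus_{i=1}^d \Cheb_n$ and of the associated Lagrange basis. Recall from Definition~\ref{def:LN} that each multivariate Lagrange polynomial factorizes as $L_\alpha(\mathrm{x})=\prod_{i=1}^d l_{\alpha_i,i}(x_i)$, where each $l_{\alpha_i,i}$ is a univariate Lagrange polynomial for the node set $\Cheb_n\subseteq[-1,1]$. Since the multi-index set $A_{d,n}=A_{d,n,\infty}$ is the full tensor grid $\{0,\dots,n\}^d$, the sum $\sum_{\alpha\in A_{d,n}}|L_\alpha(\mathrm{x})|$ factors as a product of one-dimensional sums.

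The key computation is the following identity: for every $\mathrm{x}=(x_1,\dots,x_d)\in\square_d$,
\begin{equation*}
\sum_{\alpha\in A_{d,n}}|L_\alpha(\mathrm{x})|
=\sum_{\alpha_1=0}^{n}\cdots\sum_{\alpha_d=0}^{n}\prod_{i=1}^d |l_{\alpha_i,i}(x_i)|
=\prod_{i=1}^d\Big(\sum_{j=0}^{n}|l_{j,i}(x_i)|\Big).
\end{equation*}
Taking the supremum over $\mathrm{x}\in\square_d$ and using that the supremum of a product of nonnegative functions of independent variables is the product of the suprema, we obtain
\begin{equation*}
\Lambda(\Cheb_{d,n})
=\Big\|\sum_{\alpha\in A_{d,n}}|L_\alpha|\Big\|_{C^0(\square_d)}
=\prod_{i=1}^d\,\sup_{x_i\in[-1,1]}\sum_{j=0}^{n}|l_{j,i}(x_i)|
=\Lambda(\Cheb_n)^d,
\end{equation*}
since each factor is exactly the one-dimensional Lebesgue constant $\Lambda(\Cheb_n)$ for the node set $P_i=\Cheb_n$. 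In fact this gives equality rather than merely the stated inequality, but the inequality $\Lambda(\Cheb_{d,n})\le\Lambda(\Cheb_n)^d$ suffices. The asymptotic claim $\Lambda(\Cheb_{d,n})\in\Oc(\log(n+1)^d)$ is then immediate from the one-dimensional estimate~\eqref{eq:LEB}, which gives $\Lambda(\Cheb_n)=\Oc(\log(n+1))$, so that $\Lambda(\Cheb_n)^d=\Oc(\log(n+1)^d)$.

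There is no real obstacle here; the only point requiring a little care is justifying that the supremum of the product over the box $\square_d$ equals the product of the one-dimensional suprema. This follows because $\square_d=\prod_{i=1}^d[-1,1]$ is a Cartesian product and each factor $\sum_{j=0}^n|l_{j,i}(x_i)|$ depends only on the single coordinate $x_i$ and is nonnegative and continuous (hence attains its supremum on the compact interval $[-1,1]$); choosing $x_i$ to be a maximizer of the $i$-th factor simultaneously maximizes the product. The remainder is bookkeeping with the definition~\eqref{eq:defLEB} of the Lebesgue constant.
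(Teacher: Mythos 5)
Your proof is correct and follows essentially the same route as the paper: factor the tensor-product Lagrange basis, turn the sum over $A_{d,n}$ into a product of one-dimensional sums, and bound the sup-norm of the product by the product of the one-dimensional Lebesgue constants. The only difference is that you additionally note the bound is in fact an equality (by maximizing each coordinate factor independently over the Cartesian product domain), which sharpens but does not change the stated result.
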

\begin{proof} We consider the tensorial Lagrange polynomials $L_{\boldsymbol{\alpha}}(\mathrm{x}) = \prod_{i=1}^d l_{\alpha_i,i}(x_i)$ 
in the Chebyshev--Lobatto nodes,
with $l_{j,i}$ given in \eqref{eq:L} and obtain
\begin{align}
 \Lambda(\Cheb_{d,n})
 &=
  \Big\|\sum_{\boldsymbol{\alpha} \in A_{d,n}} |L_{\boldsymbol{\alpha}}| \Big\|_{C^0(\square_d)}    \leq \norm[\Big]{\sum_{\boldsymbol{\alpha} \in A_{d,n}} \prod_{i=1}^d |l_{\alpha_i,i}| }_{C^0(\square_d)} \label{eq:LEBB} \\
  &=
  \norm[\Big]{\big(\sum_{j=0}^n |l_{j,1}|\big) \cdots   \big(\sum_{j=0}^n |l_{j,l}|\big)\  \cdots \big(\sum_{j=0}^n |l_{j,d}|\big) }_{C^0(\square_d)}\,, \,\,\, 1<l<d\nonumber \\
 &\leq
 \prod_{i=1}^d \norm[\Big]{ \sum_{j=0}^n |l_{j,i}| }_{C^0(\square_d)}=  \prod_{i=1}^d \Lambda(\Cheb_n)\,. \nonumber
\end{align}
With \eqref{eq:LEB}, this yields $\Lambda(\Cheb_{d,n}) \leq \Lambda(\Cheb_n)^d \in \Oc(\log(n+1)^d)$.
\end{proof}

To demonstrate the advantage of interpolation
by tensor-product polynomial, we measured the
Lebesgue constants numerically by evaluating \eqref{eq:LEBB} on a very fine grid for two types
of interpolation: For  $l_\infty$-degree Chebyshev--Lobatto interpolation
on the square $\square_2$ and for total $l_1$-degree interpolation
in a uniform grid on the triangle $\triangle_2$. 
For  total $l_1$-degree interpolation in Fekete nodes on the triangle we use the Lebesgue constants from  \cite{Fekete11}. 

Fig.~\ref{fig:lebesgue_constant} shows the results.
We observe the Lebesgue constant of
 uniform triangle-grid  interpolation to 
rise quickly with increasing polynomial degree.
The Lebesgue constant for Chebyshev--Lobatto
interpolation increases much slower, while
the Lebesgue constant for Fekete nodes is only
marginal worse. 

However, Fekete nodes are only known up to degree $18$ \cite{Fekete11} in the
case of total $l_1$-degree interpolation and not for the  tensorial $l_\infty$-degree setting, which is a crucial ingredient of the approximation theory we deliver next.


\subsection{Approximation errors in terms of the \texorpdfstring{$r^\text{th}$}{r-th} total variation} 
We give a $d$-dimen- sional generalization of known error estimates with respect to the \emph{$r^{\text{th}}$ total variation}.
 We start  with a multivariate extension of a classic one-dimensional approximation result as presented in \cite{Trefethen2019},  building upon and extending the findings in \cite{caliari2008hyperinterpolation}.

 \begin{theorem}\label{theo:TRUNC}  Let $d\in\N$, $r\geq 0$, and $f$ be of bounded $r^{\text{th}}$ total variation, Definition~\ref{def:total_variation}. 
Then $f$ can be expanded in a Chebyshev series
\begin{equation*}
f(\mathrm{x})   =\sum_{\boldsymbol{\alpha}\in \N_0^d}c_{\boldsymbol{\alpha}} T_{\alpha_1}(x_1)\cdots T_{\alpha_d}(x_d)\,,
\end{equation*}
\begin{equation}\label{eq:bound}
    \text{with} \quad
|c_{\boldsymbol{\alpha}}|\leq V_{f,r}\Big(\frac{2}{ \pi q(q-1)\ldots(q-r)}\Big)^{d} \,,
\end{equation}
whenever $q =\min_{i=1,\dots,d} \alpha_i  \geq r+1$.
\end{theorem}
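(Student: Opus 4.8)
The plan is to reduce the claimed $d$-dimensional statement to the classical one-dimensional Chebyshev-coefficient decay bound (the result of \cite{Trefethen2019} that Theorem~\ref{theo:TRUNC} generalizes): if $g:[-1,1]\to\R$ has $g,g',\dots,g^{(r-1)}$ absolutely continuous and $g^{(r)}$ of bounded variation, then for every $q\geq r+1$ the one-dimensional Chebyshev coefficients $a_q(g)=\tfrac{2}{\pi}\int_{-1}^{1}g(t)T_q(t)(1-t^2)^{-1/2}\,dt$ obey $|a_q(g)|\leq \tfrac{2}{\pi}\,\mathrm{Var}\bigl(g^{(r)}\bigr)\,(q-r)^{-(r+1)}$, and $\mathrm{Var}(g^{(r)})=\int_{-1}^{1}|g^{(r+1)}(t)|\,dt$ when $g^{(r+1)}$ is integrable. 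The existence of the tensorial expansion $f=\sum_{\alpha}c_\alpha\prod_i T_{\alpha_i}$ with $c_\alpha=\prod_i\kappa_{\alpha_i}\int_{\square_d}f(\mathrm{x})\prod_i T_{\alpha_i}(x_i)(1-x_i^2)^{-1/2}\,d\mathrm{x}$ (where $\kappa_0=1/\pi$, $\kappa_k=2/\pi$ for $k\geq 1$) then follows from the continuity of $f$ together with the very decay estimate to be proved.

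First I would fix $\alpha$ with $q=\|\alpha\|_\infty\geq r+1$ and pick a coordinate $j$ with $\alpha_j=q$; this is the only direction in which smoothness will be used. Integrating out the remaining variables first, I set $F(x_j)=\prod_{i\neq j}\kappa_{\alpha_i}\int_{\square_{d-1}}f(\mathrm{x})\prod_{i\neq j}T_{\alpha_i}(x_i)(1-x_i^2)^{-1/2}\,dx_{-j}$, so that $c_\alpha=\kappa_q\int_{-1}^{1}F(x_j)T_q(x_j)(1-x_j^2)^{-1/2}\,dx_j$ is precisely the $q$-th one-dimensional Chebyshev coefficient of $F$. Since $f$ has bounded $r^\text{th}$ total variation, Fubini's theorem applied to the integrable derivatives $\p^{\beta}f$, $\|\beta\|_1\leq r+1$, justifies differentiation under the integral sign and yields $F\in C^{r}$ with $F^{(r)}$ absolutely continuous and $F^{(r+1)}(x_j)=\prod_{i\neq j}\kappa_{\alpha_i}\int_{\square_{d-1}}\p^{(r+1)e_j}f(\mathrm{x})\prod_{i\neq j}T_{\alpha_i}(x_i)(1-x_i^2)^{-1/2}\,dx_{-j}$.

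Applying the one-dimensional bound to $F$ gives $|c_\alpha|\leq \tfrac{2}{\pi}(q-r)^{-(r+1)}\int_{-1}^{1}|F^{(r+1)}(x_j)|\,dx_j$. It then remains to estimate $\int_{-1}^{1}|F^{(r+1)}(x_j)|\,dx_j$ by $V_{f,r}$ up to a constant: pulling the absolute value inside, using $|T_{\alpha_i}|\leq 1$, and using $\int_{-1}^{1}(1-x_i^2)^{-1/2}\,dx_i=\pi$ to cancel the factor $1/\pi$ hidden in each $\kappa_{\alpha_i}$ attached to a non-dominant coordinate, one collapses the weighted $(d-1)$-fold integral and is left with $\int_{\square_d}|\p^{(r+1)e_j}f(\mathrm{x})|\,d\mathrm{x}$. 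Since $\|(r+1)e_j\|_1=r+1$, this last integral is bounded by $V_{f,r}$, which delivers $|c_\alpha|\leq \tfrac{4V_{f,r}}{\pi}(q-r)^{-(r+1)}$ as claimed.

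The step I expect to be the main obstacle is exactly this last recombination: carried out carelessly, each of the $d-1$ inactive coordinates threatens to contribute its own multiplicative factor, or worse, to replace the plain $L^1$-variation $V_{f,r}$ by a Chebyshev-weighted one; the whole point is to organise the estimate so that the singular weights $(1-x_i^2)^{-1/2}$ in the non-dominant directions are absorbed against the normalisation constants $\kappa_{\alpha_i}$ rather than against $\p^{(r+1)e_j}f$, which is what pins the constant down to the value stated in \eqref{eq:bound}. The remaining ingredients---the one-dimensional estimate and the Fubini/differentiation-under-the-integral bookkeeping needed to reduce to it---are standard given Definition~\ref{def:total_variation}.
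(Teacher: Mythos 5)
Your proposal follows essentially the same route as the paper: integrate out the non-dominant coordinates against the Chebyshev weight to obtain a one-variable function ($F$ in your notation, $g$ in the paper's), apply the classical one-dimensional coefficient decay bound from Trefethen, and then control the variation of that reduced function by $V_{f,r}$. The recombination step you flag as the main obstacle (absorbing the singular weights $(1-x_i^2)^{-1/2}$ against the normalisation constants rather than against $\p^{(r+1)e_j}f$) is precisely the point the paper compresses into the single assertion $V_{g,r}\le V_{f,r}$, so no genuinely different idea is involved.
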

\begin{proof} We recall that the Chebyshev polynomials $T_{\boldsymbol{\alpha}}(\mathrm{x}) = \prod_{i=1}^dT_{\alpha_i}(x_i)$
are an orthonormal basis of $L^2(\square_d)$ with respect to the weighted
$L^2$ inner product with weight function $\omega_d(\mathrm{x}) =  \prod_{i=1}^d\frac{1}{\sqrt{1-x_i^2}}$. Due to \cite[Theorem~4.1]{MASON1980349}, any Lipschitz continuous function $f:\square_d \to \R$ has a uniformly and absolutely convergent multivariate Chebyshev series
with coefficients
\begin{align}
    c_{\boldsymbol{\alpha}} &= \frac{2^d}{\pi^d}\int_{\square_d}\omega(\mathrm{x})f(\mathrm{x})T_{\boldsymbol{\alpha}}(\mathrm{x})\,d\mathrm{x}    \label{eq:FUB}
\end{align}
for all $\boldsymbol{\alpha} \in \N_0^d$ with $\alpha_1,\dots,\alpha_d \ge 1$,
and with each factor $2/\pi$ replaced by $1/\pi$ in \eqref{eq:FUB} if $\alpha_i=0$ for some $1\leq i\leq d$.
 By following the argumentation in 1D, \cite[Theorem~7.1,~7.2]{Trefethen2019}, the coefficients are bounded by 
\begin{equation}
|c_{\boldsymbol{\alpha}}| \leq \Big(\frac{2}{ \pi q(q-1)\ldots(q-r)}\Big)^{d} \int_{\square_d} |\p^{\boldsymbol{\beta}} f (\mathrm{x})| d\mathrm{x} \,,
\end{equation}
where $\boldsymbol{\beta} = (r+1,\dots,r+1)$ and $q =\min_{i=1,\dots,d} \alpha_i  \geq r+1$. Consequently, by Definition~\ref{def:total_variation}, the estimate
\begin{equation*}
|c_{\boldsymbol{\alpha}}| \leq V_{f,r}\Big(\frac{2}{ \pi q(q-1)\ldots(q-r)}\Big)^{d}\,,
\end{equation*}
applies.
\end{proof}
We use this result in order to control the truncation error of the Chebyshev series.

\begin{corollary}\label{coir:trunc} Let the assumptions of Theorem~\ref{theo:TRUNC} be fulfilled. We denote with
\begin{equation}
\mathcal{T}_{f,n}(\mathrm{x})=\sum_{\boldsymbol{\alpha} \in A_{d,n}}c_{\boldsymbol{\alpha}}T_{\alpha_1}(x_1)\cdots T_{\alpha_d}(x_d)
\end{equation}
the truncated Chebyshev series of $f : \square_d \to \R$ with respect to $A_{d,n}$, with $n >r$.
\begin{enumerate}[label=\roman*)]
    \item \label{item:approx_1} The truncation error is bounded by
    \begin{equation}\label{eq:trunc}
        \|f - \mathcal{T}_{f,n}\|_{C^0(\square_d)} \leq \frac{2ed^2V_{f,r}}{\pi(r-d+1)}\left(\frac{n+1}{\,n+1-r\,}\right)^{r+1}\cdot \frac{1}{n^{r-d+1}} \in\Oc\big(n^{-(r-d+1)}\big)\,,
    \end{equation}
    $r>d-1$.
    \item \label{item:approx_2} The truncation 
    error of the first-order partial derivatives is bounded by
        \begin{equation}\label{eq:dtrunc}
        \|\p_{x_i} f - \p_{x_i} \mathcal{T}_{f,n}\|_{C^0(\square_d)} \leq \frac{2ed^2V_{f,r}}{\pi(r-d-1)}\left(\frac{n+1}{\,n+1-r\,}\right)^{r+1}\cdot\frac{1}{n^{r-d-1}}\in\Oc\big(n^{-(r-d-1)}\big)\,, 
    \end{equation}
    $r>d$, $\forall i=1,\dots,d.$
\end{enumerate}
\end{corollary}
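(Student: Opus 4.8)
The plan is to derive both bounds from Theorem~\ref{theo:TRUNC}, which controls the Chebyshev coefficients $|c_\alpha| \leq \frac{4V_{f,r}}{\pi(q-r)^{r+1}}$ whenever $q = \|\alpha\|_\infty \geq r+1$. For part~\ref{item:approx_1}, I would write $f - \mathcal{T}_{f,n} = \sum_{\alpha \notin A_{d,n}} c_\alpha T_{\alpha_1}\cdots T_{\alpha_d}$ and use $\|T_{\alpha_i}\|_{C^0([-1,1])} = 1$ to get $\|f - \mathcal{T}_{f,n}\|_{C^0(\square_d)} \leq \sum_{\alpha \notin A_{d,n}} |c_\alpha|$. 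The complement of $A_{d,n} = \{\alpha : \|\alpha\|_\infty \leq n\}$ in $\N^d$ is the set of multi-indices with at least one component exceeding $n$; by a union bound over the $d$ coordinate directions, this sum is at most $d$ times the sum over multi-indices whose (say) last component is $\geq n+1$. First I would bound that by summing the one-dimensional tail estimate: using $q = \|\alpha\|_\infty \geq \alpha_d \geq n+1 > r+1$, the coefficient bound gives a factor decaying like $(q-r)^{-(r+1)}$, and the remaining $d-1$ free indices (each ranging $0,\dots$, but effectively the sum converges because the $\|\alpha\|_\infty$-based bound still applies) contribute. Actually the cleaner route: bound $\sum_{\alpha \notin A_{d,n}} |c_\alpha|$ by grouping according to the value $q = \|\alpha\|_\infty = m$ for $m \geq n+1$; the number of $\alpha \in \N^d$ with $\|\alpha\|_\infty = m$ is $(m+1)^d - m^d \leq d(m+1)^{d-1}$, each with $|c_\alpha| \leq \frac{4V_{f,r}}{\pi(m-r)^{r+1}}$, so the tail is $\leq \frac{4dV_{f,r}}{\pi}\sum_{m \geq n+1}\frac{(m+1)^{d-1}}{(m-r)^{r+1}}$. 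Comparing this series to an integral $\int_{n}^\infty \frac{(t+1)^{d-1}}{(t-r)^{r+1}}\,dt$ and estimating crudely yields the stated $\Oc(n^{-(r-1)})$ rate with the explicit constant $\frac{2d(n+1)V_{f,r}}{\pi r (n-r)^r}$; getting exactly that constant will require a somewhat careful but routine choice of how to bound $(m+1)^{d-1}/(m-r)^{r+1}$ and sum it (e.g.\ pulling out a factor $(n+1)$ and dominating the rest by $\int_{n-r}^\infty s^{-r-1}\,ds = \frac{1}{r(n-r)^r}$ after absorbing polynomial factors).

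For part~\ref{item:approx_2}, the plan is the same but with an extra factor coming from differentiating Chebyshev polynomials. I would use the classical bound $\|T_k'\|_{C^0([-1,1])} = k^2$ (the maximum of $|T_k'|$ on $[-1,1]$ is attained at the endpoints and equals $k^2$). Then $\p_{x_i}(f - \mathcal{T}_{f,n}) = \sum_{\alpha \notin A_{d,n}} c_\alpha T_{\alpha_1}\cdots T_{\alpha_i}'\cdots T_{\alpha_d}$, so $\|\p_{x_i}f - \p_{x_i}\mathcal{T}_{f,n}\|_{C^0} \leq \sum_{\alpha \notin A_{d,n}} |c_\alpha|\,\alpha_i^2 \leq \sum_{\alpha \notin A_{d,n}} |c_\alpha|\,\|\alpha\|_\infty^2$. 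Since $\|\alpha\|_\infty = m$ on each shell, this replaces the summand by $m^2 \cdot \frac{4V_{f,r}}{\pi(m-r)^{r+1}}$, i.e.\ effectively reduces the decay exponent by $2$, producing $\Oc(n^{-(r-3)})$ and the explicit constant $\frac{2d(n+1)n^2 V_{f,r}}{\pi(r-2)(n-r)^r}$ — here the $n^2$ comes from bounding the two largest contributing factors and the $(r-2)$ in the denominator from $\int s^{-(r-1)-1}\,ds$-type estimates after two powers of $m$ are absorbed, analogous to how $r$ appeared in part~\ref{item:approx_1}.

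The main obstacle is purely bookkeeping: arranging the multi-index tail sum $\sum_{\alpha \notin A_{d,n}}$ so that one recovers precisely the claimed constants rather than merely the correct asymptotic order. Two technical points need care. First, the union-bound / shell-counting must not double count multi-indices with several components exceeding $n$; using shells $\{\|\alpha\|_\infty = m\}$ with cardinality $\leq d(m+1)^{d-1}$ handles this cleanly. Second, the comparison of the resulting series $\sum_{m \geq n+1} m^{d-1}(m-r)^{-(r+1)}$ (or with the extra $m^2$) to an integral must be done so the polynomial factors $m^{d-1}$ (and $m^2$) are absorbed into the prefactor $d(n+1)$ (resp.\ $d(n+1)n^2$) while the remaining $(m-r)^{-(r+1)}$ integrates to $\frac{1}{r(n-r)^r}$ (resp.\ the $(r-2)$ version). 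I expect this to work because for $m \geq n+1$ one has crude monotone bounds like $m^{d-1} \leq (n+1)^{d-1} (m/(n+1))^{d-1}$ that can be balanced against the rapidly decaying $(m-r)^{-(r+1)}$; the inequalities are not tight but suffice, and the requirement $n > r$ (and implicitly $r > 2$ in part~\ref{item:approx_2} so the constant is finite) guarantees the denominators are positive. No genuinely hard analysis is involved beyond these estimates, the uniform bound $\|T_k\|_\infty = 1$, and the derivative bound $\|T_k'\|_\infty = k^2$.
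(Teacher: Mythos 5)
Your proposal follows essentially the same route as the paper: bound $\|f-\mathcal{T}_{f,n}\|_{C^0(\square_d)}$ by the tail sum $\sum_{\alpha\notin A_{d,n}}|c_\alpha|$ using $\|T_k\|_{C^0([-1,1])}\le 1$, invoke the coefficient decay of Theorem~\ref{theo:TRUNC}, and compare the resulting series to $\int_n^\infty (x-r)^{-(r+1)}\,dx$; for the derivatives, insert $\|T_k'\|_{C^0([-1,1])}=k^2$ and peel off $k^2/(k-r)^2\le n^2/(n-r)^2$ before integrating, which is exactly how the paper produces the extra $n^2$ and the $(r-2)$. The one place you are more careful than the paper is the counting of the tail: your shell count $\#\{\alpha:\|\alpha\|_\infty=m\}\le d(m+1)^{d-1}$ is correct, but since it grows with $m$ it cannot simply be ``absorbed into the prefactor $d(n+1)$'' when $d\ge 3$ --- trading the $d-1$ powers of $m$ against $(m-r)^{-(r+1)}$ costs $d-2$ powers in the final exponent. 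The paper sidesteps this by bounding the tail with the cruder double sum $d\sum_{k=n+1}^\infty\sum_{i=0}^n$, i.e., it effectively allows only one companion index per offending coordinate; so your worry about recovering the exact constant is well-founded, and by either accounting the stated prefactor $d(n+1)$ with exponent $r$ is only literally obtained for $d\le 2$.
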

\begin{proof}
\ref{item:approx_1} directly follows from Theorem~\ref{theo:TRUNC}:
Since \( T_k(\cos(x)) = \cos(kx) \) for all \( k \in \mathbb{N}_0 \), we observe that \( \|T_k\|_{C^0([-1,1])} \leq 1 \).
Let \(\Lambda_n^{(i)} = \{ \boldsymbol{\alpha} \in \mathbb{N}_{0}^d : \alpha_i > n \}\).
Then the truncation error admits the following bound:


\begin{align}
  \|f - \mathcal{T}_{f,n}\|_{C^0(\square_{d})}
  &\leq
  \sum_{\boldsymbol{\alpha} \in \mathbb{N}_{0}^d \setminus A_{d,n}} c_{\boldsymbol{\alpha}}\| T_{\alpha_1}(x_1)\cdots T_{\alpha_d}(x_d)\|_{C^0(\Omega_{d})}  \leq \sum_{\boldsymbol{\alpha} \in \mathbb{N}_{0}^d \setminus A_{d,n}} |c_{\boldsymbol{\alpha}}|\nonumber\\
  &\leq \sum_{i=1}^{d}  \sum_{\boldsymbol{\alpha} \in \Lambda_n^{(i)}}
\frac{2V_{f,r}}{ \pi \alpha_i(\alpha_i-1)\ldots(\alpha_i-r)}\leq \sum_{i=1}^{d}  \sum_{\boldsymbol{\alpha} \in \Lambda_n^{(i)}}
\frac{2V_{f,r}}{ \pi (\alpha_i-r)^{r+1}}\nonumber\\
  &= \frac{2dV_{f,r}}{\pi}\sum_{q=n+1}^{\infty} \frac{ |A_{d,q}\setminus A_{d,q-1}|}{(q - r)^{r+1}}\leq \frac{2ed^2V_{f,r}}{\pi}\sum_{q=n+1}^{\infty} \frac{ q^{d-1}}{(q - r)^{r+1}}\nonumber\\
  & \leq \frac{2ed^2V_{f,r}}{\pi}\left(\frac{n+1}{\,n+1-r\,}\right)^{r+1} \sum_{q=n+1}^{\infty} \frac{1}{q^{r-d+2}}\leq\frac{2ed^2V_{f,r}}{\pi}\left(\frac{n+1}{\,n+1-r\,}\right)^{r+1}\int_{n}^{\infty}\frac{1}{x^{r-d+2}}dx\nonumber\\\nonumber
  &= \frac{2ed^2V_{f,r}}{\pi(r-d+1)}\left(\frac{n+1}{\,n+1-r\,}\right)^{r+1}\cdot \frac{1}{n^{r-d+1}}\label{eq:sum_bound}\,
\end{align}
where for each \( \boldsymbol{\alpha} \in \Lambda_n^{(i)} \), we apply the Chebyshev coefficient bound~\eqref{eq:bound} in the \( i \)-th coordinate only, assuming \( q:=\alpha_i \geq r + 1 \). Moreover,we used the estimate \( |A_{d,q} \setminus A_{d,q-1}|
= (q+1)^d - q^d
\;\le\; e d q^{d-1}, \; (q > d)\), and bounded the resulting monotonically decreasing sum (for \(r-d+2 >1\)), which ultimately yields the desired estimate~\eqref{eq:trunc}.

We show \ref{item:approx_2} for the partial derivative $\p_{x_i}$ by writing
\begin{equation*}
    \norm{\partial_{x_i}f-\partial_{x_i}\mathcal{T}_{f,n}}_{C^0(\square_d)}\leq \sum_{\boldsymbol{\alpha} \in \N_0^d \setminus A_{d,n}} |c_{\boldsymbol{\alpha}}|\norm{T_{\alpha_1}\cdots T_{\alpha_{i-1}}}_{C^0(\square_d)}\norm{T'_{\alpha_i}}_{C^0(\square_d)}\norm{T_{\alpha_{i+1}}\cdots T_{\alpha_d}}_{C^0(\square_d)}\,.
\end{equation*}
We recall that $T_{k}(x)=\cos{\left(k\arccos(x\right))}$ for $-1\leq x\leq 1$ and deduce
that for all $k \in \N_0$
\begin{equation}\label{eq.derivative}
T'_{k}(x)=\frac{k\sin{\left(k\arccos\left(x\right)\right)}}{\sqrt{1-x^{2}}} =\frac{k\sin{\left(kt\right)}}{\sin{\left(t\right)}}\,, \qquad t=\arccos(x)\,,
\end{equation}
yielding $\|T'_{\alpha_i}\|_{C^0(\square_d)}=\alpha_i^{2}$.
Following~\ref{item:approx_1}, we compute

\begin{align}
    \norm{\partial_{x_i}f-\partial_{x_i}\mathcal{T}_{f,n}}_{C^0(\square_d)}
    &\leq \sum_{\boldsymbol{\alpha} \in \N_0^d \setminus A_{d,n}} |c_{\boldsymbol{\alpha}}|\alpha_i^2  \leq \sum_{i=1}^{d}  \sum_{\boldsymbol{\alpha} \in \Lambda_n^{(i)}}
 \frac{2V_{f,r}}{ \pi (\alpha_i-r)^{r+1}}\alpha_i^2\nonumber \\ 
&\leq \frac{2ed^2V_{f,r}}{\pi}\sum_{q=n+1}^{\infty} \frac{q^{d-1}q^2}{(q-r)^{r+1}} \leq \frac{2ed^2V_{f,r}}{\pi}\left(\frac{n+1}{\,n+1-r\,}\right)^{r+1} \sum_{q=n+1}^{\infty} \frac{1}{q^{r-d}}\nonumber\\
&\leq \frac{2ed^2V_{f,r}}{\pi}\left(\frac{n+1}{\,n+1-r\,}\right)^{r+1}\int_{n}^{\infty}\frac{1}{x^{r-d}}=\frac{2ed^2V_{f,r}}{\pi(r-d-1)}\left(\frac{n+1}{\,n+1-r\,}\right)^{r+1}\cdot\frac{1}{n^{r-d-1}} \,,\label{eq:EST}
\end{align}
by bounding the monotonically decreasing sum (for  \(r-d >1\))  in \eqref{eq:EST}.
\end{proof}

With the previous results, we can bound the
approximation error of the Chebyshev--Lobatto interpolant of $f$. 

\begin{corollary}\label{corollary.ineq}
Let the assumption of Theorem~\ref{theo:TRUNC} be satisfied and 
$Q_{G_{d,n}}f$ be the interpolant of $f : \square_d \to \R$ 
in the Chebyshev--Lobatto grid $\Cheb_{d,n}$.
Then the approximation errors of $f$ and its first derivatives are bounded by
\begin{align}\label{eq:app}
    \|f-Q_{G_{d,n}}f\|_{C^0(\square_d)}
    & \leq \frac{4ed^2V_{f,r}}{\pi(r-d+1)}\left(\frac{n+1}{\,n+1-r\,}\right)^{r+1}\cdot \frac{1}{n^{r-d+1}}\in\Oc\big(n^{-(r-d+1)}\big)  \,.
\intertext{and}
 \label{eq:dapp}
    \|\p_{x_i} f-\p_{x_i} Q_{G_{d,n}}f\|_{C^0(\square_d)}
    & \leq \frac{4ed^2V_{f,r}}{\pi(r-d-1)}\left(\frac{n+1}{\,n+1-r\,}\right)^{r+1}\cdot\frac{1}{n^{r-d-1}}\in\Oc\big(n^{-(r-d-1)}\big)\,
\end{align}
for all $i =1,\dots,d$.
\end{corollary}
\begin{proof} The statement is a direct consequence of Theorem~\ref{theo:TRUNC} and the [Aliasing Theorems 4.1, 4.2] \cite{Trefethen2019}, stating that 
\begin{equation}
f(\mathrm{x})-Q_{G_{d,n}}f(\mathrm{x}) = \sum_{\boldsymbol{\alpha} \in \N_0^d \setminus A_{d,n}}c_{\boldsymbol{\alpha}}\big( T_{\alpha_1}(x_1)\cdots T_{\alpha_d}(x_d) - T_{\beta_1}(x_1)\cdots T_{\beta_d}(x_d)\big)\,, 
\end{equation}
where $\beta_i = |(\alpha_i + n-1)\mathrm{mod} 2n - (n-1)|$. This shows that, when following the estimation in Corollary~\ref{coir:trunc}, the approximation error of the interpolant can be bounded by twice the bound, appearing for the truncation.
\end{proof}

\begin{remark}[Exponential approximation rates] 
If the total variation $V_{f,r}$ is uniformly
bounded in $r$, i.e.,
$\limsup_{r \to \infty} V_{f,r}  < \infty$,
Corollar~\ref{corollary.ineq} implies that for $n> r\in \N$ large enough 
\begin{equation}
\frac{2ed^2V_{f,r}}{\pi(r-d+1)}\left(\frac{n+1}{\,n+1-r\,}\right)^{r+1}\cdot \frac{1}{n^{r-d+1}} \leq  CR^{-n}\,, \quad \text {for some} \,\, 1 < R \,, C\in \R^+\,.
\end{equation}
Hence, the error bounds
\eqref{eq:app} and \eqref{eq:dapp} imply
exponential error decay for increasing degree $n \in \N$.
\end{remark}

\section{Integration errors of high-order volume elements (HOVE) }\label{sec:APPP_HOS}

We derive the integration error for replacing  the surface geometry $\varphi_i$ and
the integrand $f$ by Chebyshev--Lobatto interpolants
 $Q_{G_{d,k}} \varphi_i$, $Q_{G_{d,n}}(f \circ \varphi_i)$, respectively. As we show in Corollary~\ref{cor:pull}, the resulting closed form expression of 
the integral can be computed precisely by high-order quadrature rules:
\begin{align}
\int_S f\,dS 
&\approx
\sum_{i=1}^K\int_{\square_d} Q_{G_{d,n}}(f\circ\varphi_i)(\mathrm{x}) \sqrt{\det((DQ_{G_{d,k}}\varphi_i(\mathrm{x}))^T DQ_{G_{d,k}}\varphi_i(\mathrm{x}))}\,d\mathrm{x} \nonumber \\ 
&\approx
\sum_{i=1}^K \sum_{\mathrm{p} \in P}\omega_{\mathrm{p}} \; Q_{G_{d,n}}(f \circ\varphi_i)(\mathrm{p})
\sqrt{\det((DQ_{G_{d,k}}\varphi_i(\mathrm{p}))^T DQ_{G_{d,k}}\varphi_i(\mathrm{p}))}\,. \label{eq:HOSQ}
\end{align}


We start by bounding the approximation error of the geometry.

\begin{lemma}\label{lem:Jac}
Let $S$ be a $d$-dimensional $C^{r+1}$-surface, $r \geq0$,
and $\varphi_i = \varrho_i \circ \sigma : \square_d \to \R^m$, $i=1,\dots, K$ be 
a $r$-regular cubical re-parametrization, Definition~\ref{def:cubical_reparametrization}. 
Let $Q_{G_{d,k}} \varphi_i$ be
the vector-valued tensor-polynomial interpolant
of $\varphi_i$ in the Chebyshev--Lobbatto grid $\Cheb_{d,k}$.
\begin{enumerate}[label={\roman*})]
    \item \label{item:difference_jacobians} The Jacobians of $\varphi_i$ and its interpolant $Q_{G_{d,k}}\varphi_i$ differ by
    \begin{equation}\label{eq:Jac}
        \|D\varphi_i - DQ_{G_{d,k}}\varphi_i\|_{C^0(\square_d)}
         \leq \frac{4ed^2V_{\varphi,r}}{\pi(r-d-1)}\left(\frac{k+1}{\,k+1-r\,}\right)^{r+1}\cdot \frac{1}{k^{r-d-1}}\,,
    \end{equation}

   where $V_{\varphi_i,r}$ is the maximum $r^{\text{th}}$ total variation of the coordinate functions of $\varphi_i$.

    \item \label{item:volume_element_difference} The difference of the volume elements is bounded by
\begin{equation*}
\bigl\|\sqrt{\det(\Phi_i)}-\sqrt{\det(\Psi_i)}\bigr\|_{C^0(\square^d)}
\;\leq\; d\,C^{\,d-1}\,
\bigl\|D\varphi_i - DQ_{G^d,k}\varphi_i\bigr\|_{C^0(\square^d)}\;,
\end{equation*}
    where $\Phi_i = D\varphi_i^TD\varphi_i$, $\Psi_i = DQ_{G_{d,k}}\varphi_i^TDQ_{G_{d,k}}\varphi_i$.
\end{enumerate}
\end{lemma}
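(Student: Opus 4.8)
The plan is to handle the two parts separately, with part~\ref{item:difference_jacobians} being essentially an application of the approximation theory already developed and part~\ref{item:volume_element_difference} being a matrix-perturbation estimate. For part~\ref{item:difference_jacobians}, I would apply the scalar estimate of Corollary~\ref{corollary.ineq}, specifically the first-derivative bound~\eqref{eq:dapp}, componentwise to each of the $m$ coordinate functions of $\varphi_i$. Since the Chebyshev--Lobatto interpolation operator acts componentwise on vector-valued functions, $DQ_{G_{d,k}}\varphi_i = Q_{G_{d,k}}(D\varphi_i)$ entrywise (the interpolant of the derivative equals the derivative of the interpolant only in the appropriate aliasing sense, so I would rather write: each entry $\p_{x_j}(\varphi_i)_\ell - \p_{x_j}Q_{G_{d,k}}(\varphi_i)_\ell$ is bounded by~\eqref{eq:dapp} applied to the scalar function $(\varphi_i)_\ell$). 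Combining~\eqref{eq:dapp} with the explicit bound~\eqref{eq:dtrunc} from Corollary~\ref{coir:trunc} gives, for each entry, the bound $\tfrac{4d(k+1)k^2 V_{(\varphi_i)_\ell,r}}{\pi(r-2)(k-r)^r}$; taking the maximum over coordinates $\ell$ (which is exactly $V_{\varphi_i,r}$ by definition) and over the matrix entries, and using whichever matrix norm $\|\cdot\|_{C^0}$ the paper has fixed, yields~\eqref{eq:Jac}. The factor $2$ from~\eqref{eq:dapp} and the factor $2$ inside~\eqref{eq:dtrunc} combine to the $4$ in the statement.

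For part~\ref{item:volume_element_difference}, I would first reduce the square-root-of-determinant difference to a determinant difference, then a Jacobian difference. Write $A = D\varphi_i$, $B = DQ_{G_{d,k}}\varphi_i$, both $m\times d$ matrices, so $\Phi_i = A^TA$, $\Psi_i = B^TB$ are $d\times d$ symmetric positive semidefinite. Using the elementary inequality $|\sqrt{a}-\sqrt{b}| \le \sqrt{|a-b|}$ for $a,b\ge 0$, it suffices to bound $\|\det(\Phi_i) - \det(\Psi_i)\|_{C^0}$. Then I would expand $\det(A^TA) - \det(B^TB)$ as a telescoping sum — the Cauchy--Binet formula writes $\det(A^TA) = \sum_{|J|=d}\det(A_J)^2$ over $d$-element column subsets $J$ of the rows of $A$, and similarly for $B$; each term $\det(A_J)^2 - \det(B_J)^2$ factors, and $\det$ of a $d\times d$ matrix is a sum of $d!$ products of $d$ entries, so a standard multilinear telescoping bounds $|\det(A_J) - \det(B_J)|$ by $d!\,\|A-B\|^{d}$ up to lower-order-in-$\|A-B\|$ terms, under the normalization $\|D\varphi_i\|, \|DQ_{G_{d,k}}\varphi_i\| \le 1$ (which is plausible here given $\|D\sigma_*\|_{C^0}\le 1$ and a normalization of the $\varrho_i$). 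Collecting the combinatorial constants gives the claimed $d!\,\|D\varphi_i - DQ_{G_{d,k}}\varphi_i\|_{C^0}^d$ bound.

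The main obstacle is getting the constant in part~\ref{item:volume_element_difference} to come out as cleanly as $d!$ without a mess of cross terms: the naive telescoping of $\det(A) - \det(B)$ produces $d$ terms each of which is a product mixing entries of $A$ and $B$, and bounding these requires a uniform bound on the entries of both matrices — hence the implicit normalization that $\|D\varphi_i\|_{C^0}\le 1$ and $\|DQ_{G_{d,k}}\varphi_i\|_{C^0}\le 1$. One must either invoke such a normalization (consistent with Remark~\ref{rem:ss}) or absorb $\max\{1,\|D\varphi_i\|_{C^0}\}^{d-1}$-type factors into the constant; I would state the normalization explicitly. A secondary subtlety is that the bound is stated in terms of $\|D\varphi_i - DQ_{G_{d,k}}\varphi_i\|_{C^0}^d$ rather than to the first power, which is the "right" leading-order behavior only when that quantity is small (i.e., $k$ large) — for small $k$ the bound is weak but still valid, so no extra care is needed there, only a remark that the estimate is meant for the asymptotic regime $k \gg r$.
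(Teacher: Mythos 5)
Your treatment of part~\ref{item:difference_jacobians} matches the paper's: the bound follows by applying \eqref{eq:dapp} from Corollary~\ref{corollary.ineq} together with \eqref{eq:dtrunc} from Corollary~\ref{coir:trunc} to each coordinate function of $\varphi_i$ and each partial derivative, taking the maximum over coordinates to obtain $V_{\varphi_i,r}$; the factor $4$ in \eqref{eq:Jac} is indeed the product of the two factors of $2$. No issue there.

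Part~\ref{item:volume_element_difference} has a genuine gap: the route you propose cannot produce the stated inequality. Passing through $|\sqrt{a}-\sqrt{b}|\le\sqrt{|a-b|}$ and then bounding $|\det\Phi_i-\det\Psi_i|$ via Cauchy--Binet and multilinear telescoping yields a quantity whose leading term is \emph{linear} in $\varepsilon:=\|D\varphi_i-DQ_{G_{d,k}}\varphi_i\|_{C^0(\square_d)}$, because each of the $d$ telescoping summands contains exactly one small factor and $d-1$ factors that are plain entries of $D\varphi_i$ or $DQ_{G_{d,k}}\varphi_i$; after taking the square root you get $O(\varepsilon^{1/2})$, which for small $\varepsilon$ is far larger than the claimed $d!\,\varepsilon^{d}$, so the asserted bound does not follow. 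No bookkeeping of ``lower-order terms'' repairs this --- the powers go the wrong way. The paper reaches the $d$-th power by a structurally different mechanism: it works with the matrix square roots $\Phi_i^{1/2}=(\phi_{st})$ and $\Psi_i^{1/2}=(q_{st})$, so that $\sqrt{\det\Phi_i}=\det\bigl(\Phi_i^{1/2}\bigr)$, identifies the difference of these determinants with the Leibniz expansion in the \emph{entrywise differences} $\phi_{s,\gamma(s)}-q_{s,\gamma(s)}$ (a sum of $d!$ products each containing $d$ small factors, hence the $d!\,(\cdot)^d$ form), and then bounds $|\phi_{st}-q_{st}|$ by the corresponding entry of $\bigl((D\varphi_i-DQ_{G_{d,k}}\varphi_i)^T(D\varphi_i-DQ_{G_{d,k}}\varphi_i)\bigr)^{1/2}$, i.e.\ by $\varepsilon$; this also removes any need for your normalization $\|D\varphi_i\|_{C^0(\square_d)}\le 1$, which is not among the lemma's hypotheses. (That identification step itself deserves scrutiny, since a difference of determinants is not the determinant of the difference; but it is the only place the degree-$d$ homogeneity can come from.) Your own caveat that $\varepsilon^{d}$ is ``the right leading-order behavior only when $\varepsilon$ is small'' should have been a warning sign: a first-order perturbation of the Jacobian perturbs the volume element at first order, so no argument whose leading term is linear in $\varepsilon$ --- yours included --- can terminate in a bound of order $\varepsilon^{d}$.
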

\begin{proof}
\ref{item:difference_jacobians} follows directly from  Corollary~\ref{corollary.ineq}, \eqref{eq:dapp},
whereas \ref{item:volume_element_difference} can be estimated 
in terms of singular values. For 
\(\Phi_i = D\varphi_i^{\!\top} D\varphi_i\) and 
\(\Psi_i = (DQ_{G_{d,k}}\varphi_i)^{\!\top}DQ_{G_{d,k}}\varphi_i\), 
let \(\sigma_1(\cdot)\geq\cdots\geq\sigma_d(\cdot)\) denote the singular values. Then
\begin{equation}
\sqrt{\det(\Phi_i)} = \prod_{j=1}^d \sigma_j(D\varphi_i), 
\qquad 
\sqrt{\det(\Psi_i)} = \prod_{j=1}^d \sigma_j(DQ_{G_{d,k}}\varphi_i).
\end{equation}
By the mean-value inequality for products,
\begin{equation}
\bigl|\sqrt{\det(\Phi_i)} - \sqrt{\det(\Psi_i)}\bigr|
\;\leq\; \sum_{k=1}^d 
|\sigma_k(D\varphi_i)-\sigma_k(DQ_{G_{d,k}}\varphi_i)|\,
\prod_{j\neq k}\max\{\sigma_j(D\varphi_i),\sigma_j(DQ_{G_{d,k}}\varphi_i)\}.
\end{equation}
Now $\sigma_j(D\varphi_i \leq \|D\varphi_i \|$ and  $k\gg 1$ large enough 
\begin{align*}
\sigma_j(DQ_{G_{d,k}}\varphi_i) 
&\leq \|DQ_{G_{d,k}}\varphi_i\|_{C^0(\square^d)} \\
&\leq \|D\varphi_i\|_{C^0(\square^d)} 
   + \|DQ_{G_{d,k}}\varphi_i - D\varphi_i\|_{C^0(\square^d)} \\
&\leq \|D\varphi_i\|_{C^0(\square^d)} + 1.
\end{align*}
We set 
$C := \max_{i=1,\dots,K}\|D\varphi_i\|_{C^0(\square^d)} + 1$
and obtain 
\begin{align*}
\big|\sqrt{\det(\Phi_i)} - \sqrt{\det(\Psi_i)}\big|
&\leq  \sum_{k=1}^d 
|\sigma_k(D\varphi_i)-\sigma_k(DQ_{G_{d,k}}\varphi_i)|\, C^{d-1} \\
&\leq  d\,C^{d-1}\,
\|D\varphi_i - DQ_{G_{d,k}}\varphi_i\|_{C^0(\square^d)},
\end{align*}
where we used Weyl’s inequality for the second line.

\end{proof}

With the help of the previous result, we bound the integration error.
\begin{theorem}[Integration error] \label{Main.Thm}
Let the  assumptions of Lemma~\ref{lem:Jac} be satisfied, and let $f:S \to \R$ be of bounded $r^{\text{th}}$ total variation $V_{f,r}$.
For each mesh element, we consider its approximation $Q_{G_{d,n}}$ by
tensor-polynomial interpolation in the Chebyshev--Lobbatto grid $\Cheb_{d,n}$.
Then the integration error induced by the approximation of the geometry $\varphi_i$ and of $f\circ \varphi_i$ is
\begin{multline*}
\Big|
 \int_S f\,dS - \sum_{i=1}^K \int_{\square_d} Q_{G_{d,n}}(f\circ \varphi_i)(\mathrm{x}) \sqrt{\det\big((DQ_{G_{d,k}}\varphi_i(\mathrm{x}))^T DQ_{G_{d,k}}\varphi_i(\mathrm{x})\big)}\,d\mathrm{x} \Big|\\
  \leq
  \ee_f\vol(S) + \ee_f\ee_\varphi \vol(\square_d) + \|f\|_{C^0(S)}\ee_\varphi\,\vol(\square_d)
  =
 \Oc\Big(\frac{1}{n^{r-d+1}}\Big)+\Oc\Big(\frac{1}{k^{r-d-1}}\Big)\,,
\end{multline*}
where $\vol(S)$ and $\vol(\square_d)$ denote the volumes of $S$ and $\square_d$, respectively, and
\begin{align*}
\ee_f = \frac{4ed^2V_{f,r}}{\pi(r-d+1)}\left(\frac{n+1}{\,n+1-r\,}\right)^{r+1}\cdot\frac{1}{n^{r-d+1}}\,, \quad
&
\mu_\varphi= \frac{4ed^2V_{\varphi,r}}{\pi(r-d-1)}\left(\frac{k+1}{\,k+1-r\,}\right)^{r+1}\cdot \frac{1}{k^{r-d-1}}\,,
\end{align*}
with
\begin{equation*}
\ee_\varphi \;=\; d\,C^{\,d-1}\,\mu_\varphi,
\qquad
C:= \max_{i=1,\dots,K}\|D\varphi_i\|_{C^0(\square_d)}+1\,
\qquad
V_{\varphi,r}=\max_{i=1,\dots,K} V_{\varphi_i,r}\,.
\end{equation*}
\end{theorem}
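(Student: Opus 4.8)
The plan is to first rewrite the exact surface integral as a sum of integrals over the reference cube, and then to split the total error into an integrand-approximation part and a geometry-approximation part by an add-and-subtract argument, bounding each piece with the interpolation estimates already established. Concretely, I would start from Lemma~\ref{prob:1}, $\int_S f\,dS=\sum_{i=1}^K\int_{\triangle_d}(f\circ\varrho_i)\,g_i\,d\mathrm{x}$ with $g_i=\sqrt{\det((D\varrho_i)^TD\varrho_i)}$, substitute $\mathrm{x}=\sigma(\mathrm{y})$ for the square-squeezing map, and use the chain-rule identity $\det\big((D\varphi_i)^TD\varphi_i\big)=(\det D\sigma)^2\,\det\big((D\varrho_i)^TD\varrho_i\big)\circ\sigma$ with $\varphi_i=\varrho_i\circ\sigma$, so that the change-of-variables Jacobian $|\det D\sigma|$ is exactly absorbed into the volume element of $\varphi_i$. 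This gives
$$\int_S f\,dS=\sum_{i=1}^K\int_{\square_d}(f\circ\varphi_i)(\mathrm{x})\sqrt{\det\Phi_i(\mathrm{x})}\,d\mathrm{x}\,,\qquad\Phi_i=(D\varphi_i)^TD\varphi_i\,,$$
and, taking $f\equiv1$, also $\sum_i\int_{\square_d}\sqrt{\det\Phi_i}\,d\mathrm{x}=\vol(S)$, which I will use below. I would also record that $\varphi_i(\square_d)\subseteq S$ implies $\|f\circ\varphi_i\|_{C^0(\square_d)}\le\|f\|_{C^0(S)}$, and that the hypothesis ``$f:S\to\R$ of bounded $r^{\text{th}}$ total variation $V_{f,r}$'' is to be read, via the chain rule together with the $r$-regularity in Definition~\ref{def:cubical_reparametrization}, as the pullbacks $f\circ\varphi_i$ having $r^{\text{th}}$ total variation controlled by $V_{f,r}$ and $V_{\varphi_i,r}$, so that Corollary~\ref{corollary.ineq} is applicable to them.

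Next, inside each cube integral I would add and subtract $Q_{G_{d,n}}(f\circ\varphi_i)\sqrt{\det\Phi_i}$ and, in the resulting geometry term, further split $Q_{G_{d,n}}(f\circ\varphi_i)=(f\circ\varphi_i)+\big(Q_{G_{d,n}}(f\circ\varphi_i)-f\circ\varphi_i\big)$. Writing $\Psi_i=(DQ_{G_{d,k}}\varphi_i)^TDQ_{G_{d,k}}\varphi_i$, this expresses the local error density as the sum of $\big(f\circ\varphi_i-Q_{G_{d,n}}(f\circ\varphi_i)\big)\sqrt{\det\Phi_i}$, $\big(Q_{G_{d,n}}(f\circ\varphi_i)-f\circ\varphi_i\big)\big(\sqrt{\det\Phi_i}-\sqrt{\det\Psi_i}\big)$, and $(f\circ\varphi_i)\big(\sqrt{\det\Phi_i}-\sqrt{\det\Psi_i}\big)$. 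For the integrand factor, Corollary~\ref{corollary.ineq} together with Corollary~\ref{coir:trunc} gives $\|f\circ\varphi_i-Q_{G_{d,n}}(f\circ\varphi_i)\|_{C^0(\square_d)}\le\ee_f$; for the volume-element factor, Lemma~\ref{lem:Jac} (part~\ref{item:difference_jacobians} feeding part~\ref{item:volume_element_difference}, with $\mu_\varphi$ the Jacobian error bound \eqref{eq:Jac}) gives $\|\sqrt{\det\Phi_i}-\sqrt{\det\Psi_i}\|_{C^0(\square_d)}\le\ee_\varphi$. Integrating over $\square_d$, summing over $i$, using $\sum_i\int_{\square_d}\sqrt{\det\Phi_i}=\vol(S)$ for the first term, $\int_{\square_d}1\,d\mathrm{x}=\vol(\square_d)$ with $\|f\circ\varphi_i\|_{C^0}\le\|f\|_{C^0(S)}$ for the remaining two (the sum over the $K$ mesh elements contributing only a mesh-fixed constant, absorbed into the final constant), and the triangle inequality, produces exactly the claimed $\ee_f\vol(S)+\ee_f\ee_\varphi\vol(\square_d)+\|f\|_{C^0(S)}\ee_\varphi\vol(\square_d)$.

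Finally, the asymptotic rates follow by reading off Corollary~\ref{coir:trunc}: $\ee_f\in\Oc(n^{-(r-1)})$ from part~\ref{item:approx_1}, and $\mu_\varphi\in\Oc(k^{-(r-3)})$ from part~\ref{item:approx_2} via Lemma~\ref{lem:Jac}\,\ref{item:difference_jacobians}; since $\ee_\varphi$ is proportional to $\mu_\varphi$ up to a mesh-independent geometric constant built from $\phi=\max_i\|D\varphi_i\|_{C^0}$, also $\ee_\varphi\in\Oc(k^{-(r-3)})$ and $\ee_f\ee_\varphi$ is of even higher order, so the total bound is $\Oc(n^{-(r-1)})+\Oc(k^{-(r-3)})$. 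I expect the genuinely delicate step to be the propagation of the first-order geometry interpolation error through the nonlinear volume element $\sqrt{\det((\cdot)^T(\cdot))}$---that is, Lemma~\ref{lem:Jac}\,\ref{item:volume_element_difference}---which rests on the Leibniz expansion of the determinant, the telescoping estimate for a difference of products of matrix entries, and the operator monotonicity of the matrix square root; the remaining work is the change of variables plus triangle-inequality bookkeeping. A secondary point needing care is the definitional reduction, via the chain rule and Definition~\ref{def:cubical_reparametrization}, of ``bounded $r^{\text{th}}$ total variation of $f$ on $S$ and of the $\varphi_i$'' to a statement about the pullbacks $f\circ\varphi_i$, since that is precisely what legitimizes applying Corollary~\ref{corollary.ineq} in the error splitting.
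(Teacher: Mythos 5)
Your proposal is correct and follows essentially the same route as the paper: the same three-term add-and-subtract decomposition of $f\sqrt{\det\Phi_i}-Q_{G_{d,n}}(f\circ\varphi_i)\sqrt{\det\Psi_i}$, bounded term by term via Corollary~\ref{corollary.ineq} and Lemma~\ref{lem:Jac}, with $\sum_i\int_{\square_d}\sqrt{\det\Phi_i}\,d\mathrm{x}=\vol(S)$ absorbing the first term. If anything, you are slightly more careful than the paper, which writes $\sqrt{\det\Psi_i}$ in its first error term while bounding it by $\ee_f\vol(S)$ (your $\sqrt{\det\Phi_i}$ is the version consistent with that bound) and which silently drops the factor $K$ you note in the $\vol(\square_d)$ terms.
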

\begin{proof} We  set  $\Phi_i = (D\varphi_i^TD\varphi_i)^{1/2}$, $\Psi_i = (DQ_{G_{d,k}}\varphi_i^TDQ_{G_{d,k}}\varphi_i)^{1/2}$, apply the Cauchy-- \linebreak Schwarz inequality, and estimate
\begin{multline*}
\Big|
 \int_S f\,dS 
 -
 \sum_{i=1}^K \int_{\square_d} Q_{G_{d,n}}(f\circ \varphi_i)(\mathrm{x}) \sqrt{\det(\Phi_i(\mathrm{x}))}\,d\mathrm{x} \Big| \\
 \begin{aligned}
 &\leq
     \sum_{i=1}^K \int_{\square_d} \big|f(\varphi_i(\mathrm{x}))  -Q_{G_{d,n}}(f\circ \varphi_i)(\mathrm{x})\big|\sqrt{\det( \Psi_i(\mathrm{x}))}\,d\mathrm{x} \\
     &+ \sum_{i=1}^K \int_{\square_d} \big|f(\varphi_i(\mathrm{x}))  -Q_{G_{d,n}}(f\circ \varphi_i)(\mathrm{x})\big| \cdot \norm[\big]{\sqrt{\det(\Phi_i(\mathrm{x}))} - \sqrt{\det(\Psi_i(\mathrm{x}))}}_{C^0(\square_d)} \,d\mathrm{x}
     \\
     &+ \sum_{i=1}^K \int_{\square_d} \big|f(\varphi_i(\mathrm{x})) \big| \cdot \norm[\big]{\sqrt{\det(\Phi_i(\mathrm{x}))} - \sqrt{\det(\Psi_i(\mathrm{x}))}}_{C^0(\square_d)} \,d\mathrm{x}\\
     &\leq \ee_f\vol(S) + \ee_f\ee_\varphi  \vol(\square_d) + \|f\|_{C^0(S)}\ee_\varphi\vol(\square_d)\,.
     \end{aligned}
\end{multline*}
    The estimates for $\ee_f$, $\ee_\varphi$, $\mu_{\varphi}$ follow from
Corollary~\ref{corollary.ineq} and
Lemma~\ref{lem:Jac}, concluding the proof.
\end{proof}

The approximated integral can now be computed using a
quadrature rule. There are two basic options: Either use
a quadrature rule for the cube domain $\square_d$ directly,
or use a simplex rule and pull it back to $\square_d$ by the
inverse of the square-squeezing map $\sigma_*$
(effectively integrating over the original triangulation
$\{\varrho_i\}$ of $S$ from Definition~\ref{def:triangulation}).
While the former seems more natural, the latter is
more efficient, as simplex rules typically consist of
fewer nodes.

\begin{corollary}[Quadrature rule error]
\label{cor:pull}
Under the assumptions of Theorem~\ref{Main.Thm}  denote with $\Phi_i = D\varphi_i^TD\varphi_i$, $\Psi_i = DQ_{G_{d,k}}\varphi_i^TDQ_{G_{d,k}}\varphi_i$, and 
$Q_{G_{d,n}}f \in \Pi_{d,n}$  the polynomial approximations of $f$ and $\varphi_i$ of $l_\infty$-degree $n,k \in \N$. Then there is $0<v<1$, independent of $l$, such that for $1 \leq l \in \N$ large enough: 

\begin{enumerate}[left=0pt,label=\roman*)]
    \item\label{Q1} Let  
$\mathrm{p} \in P,\omega_\mathrm{p}$ be the nodes and weights of the tensorial Gauss--Legendre quadrature on $\square_d$ \cite{stroud} of order $N\in \N$, integrating any polynomial $Q \in \Pi_{d,M}$ of $l_\infty$-degree  $M =2kdl + n$ exactly. Then
\begin{multline}
\int_{\square_d} Q_{G_{d,n}}f(\mathrm{x}) \sqrt{\det\big((DQ_k\varphi_i(\mathrm{x}))^T DQ_k\varphi_i(\mathrm{x})\big)}\,d\mathrm{x} \\
=  \sum_{\mathrm{p} \in P}\omega_{\mathrm{p}} 
 Q_{G_{d,n}}f(\mathrm{p})\sqrt{\det\big((DQ_k\varphi_i(\mathrm{p}))^TDQ_k\varphi_i(\mathrm{p})\big)} + \Oc(v^{l+1})\,.
\end{multline}

\item\label{Q2} Let $\sigma: \square_d \to \triangle_d$ be a cube--simplex transformation diffeomorphic in the interior $\mathring{\square}_d$,  $P^* = \{\mathrm{p^*} =\sigma^{-1}(\mathrm{q}): \mathrm{q} \in P \subseteq \mathring{\triangle_d}\}$,
$\omega_\mathrm{p^*} = \omega_{\mathrm{q}} \sqrt{\det((D\sigma^{-1}(\mathrm{q}))^TD\sigma^{-1}(\mathrm{q}))}$
be the $\sigma$-pull-back rule of a simplex rule of order $N^*\in \N$, integrating any polynomial $Q \in \Pi_{d,M}$ of $l_\infty$-degree 
$M =2kdl + n$ exactly on $\triangle_d$.  Then
\begin{multline}
\int_{\triangle_d} Q_{G_{d,n}}f(\sigma^{-1}(\mathrm{y})) \sqrt{\det\big((D\varrho_i(\mathrm{y}))^T D\varrho_i(\mathrm{y})\big)}d\mathrm{y} \\
 =    \sum_{\mathrm{p^*} \in P^*}\omega_{\mathrm{p^*}} 
 Q_{G_{d,n}}f(\mathrm{p^*})\sqrt{\det\big((DQ_k\varphi_i(\mathrm{p^*}))^T DQ_k\varphi_i(\mathrm{p^*})\big)} + \Oc(\varepsilon_iI_{i,\triangle_d}) + \Oc(v^{l+1})\,,
\end{multline}
with  $\varepsilon_i = \|\sqrt{\det(\Phi_i)} - \sqrt{\det(\Psi_i)}\|_{C^0(\square_d)}$ as in Lemma~\ref{lem:Jac} and $I_{i,\square_d}= \int_{\square_d} Q_{G_{d,n}}f(\mathrm{x}) d\mathrm{x}$.
\end{enumerate}

\end{corollary}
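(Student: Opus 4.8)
The plan is to reduce both statements to the polynomial exactness of the quadrature rule by replacing the only non-polynomial factor of the closed-form integrand — the square root in the volume element — by a polynomial of exactly the degree $M$ named in the statement. First I would record the degree count: $\det(\Psi_i)$ is the determinant of the $d\times d$ Gram matrix $(DQ_{G_{d,k}}\varphi_i)^{T}DQ_{G_{d,k}}\varphi_i$, whose entries are polynomials of $l_\infty$-degree at most $2k$, hence $\det(\Psi_i)\in\Pi_{d,2kd}$. Under the standing hypothesis that the approximated volume element stays away from zero there are $0<a\le b$ with $\det(\Psi_i)(\mathrm{x})\in[a,b]$ for all $\mathrm{x}\in\square_d$ and $i=1,\dots,K$. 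Since $t\mapsto\sqrt t$ is real-analytic on $[a,b]$, the classical Bernstein estimate (polynomial approximation of functions analytic in a Bernstein ellipse) yields a ratio $\rho=\rho(a,b)>1$ — depending on the surface and on $k$, but not on $l$ — and, for every $l\in\N$, a univariate polynomial $\tilde s_l$ of degree $l$ with $\|\sqrt\cdot-\tilde s_l\|_{C^0([a,b])}\le C\rho^{-l}$. Putting $v:=1/\rho\in(0,1)$ and $R_{i,l}:=Q_{G_{d,n}}f\cdot(\tilde s_l\circ\det(\Psi_i))$, one has $R_{i,l}\in\Pi_{d,M}$ with $M=2kdl+n$ and the key estimate $\|Q_{G_{d,n}}f\,\sqrt{\det(\Psi_i)}-R_{i,l}\|_{C^0(\square_d)}\le\|Q_{G_{d,n}}f\|_{C^0(\square_d)}\,C\rho^{-l}$, which explains the precise form of $M$.

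For part~\ref{Q1} I would use that the tensorial Gauss--Legendre rule of the prescribed order integrates $R_{i,l}\in\Pi_{d,M}$ exactly, and split the quadrature error for $Q_{G_{d,n}}f\,\sqrt{\det(\Psi_i)}$ into the continuous error $\int_{\square_d}(Q_{G_{d,n}}f\,\sqrt{\det(\Psi_i)}-R_{i,l})\,d\mathrm{x}$, the vanishing term $\int_{\square_d}R_{i,l}\,d\mathrm{x}-\sum_{\mathrm{p}\in P}\omega_{\mathrm{p}}R_{i,l}(\mathrm{p})$, and the discrete error $\sum_{\mathrm{p}\in P}\omega_{\mathrm{p}}(R_{i,l}(\mathrm{p})-Q_{G_{d,n}}f(\mathrm{p})\sqrt{\det(\Psi_i)(\mathrm{p})})$. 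Using positivity of the Gauss--Legendre weights together with $\sum_{\mathrm{p}}\omega_{\mathrm{p}}=\vol(\square_d)$, the first and third pieces are each at most $\vol(\square_d)\,\|Q_{G_{d,n}}f\|_{C^0(\square_d)}\,C\rho^{-l}=\Oc(v^{l+1})$, which is the assertion.

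For part~\ref{Q2} I would first push the left-hand integral onto the cube: from $\varrho_i=\varphi_i\circ\sigma^{-1}$ on the interior, the chain rule and the factorisation of the Gram determinant give $\sqrt{\det((D\varrho_i)^{T}D\varrho_i)}=|\det D\sigma^{-1}|\,(\sqrt{\det(\Phi_i)}\circ\sigma^{-1})$, so the substitution $\mathrm{x}=\sigma^{-1}(\mathrm{y})$ turns it into the clean cube integral $\int_{\square_d}Q_{G_{d,n}}f\,\sqrt{\det(\Phi_i)}\,d\mathrm{x}$. Trading $\sqrt{\det(\Phi_i)}$ for $\sqrt{\det(\Psi_i)}$ there costs at most $\varepsilon_i\int_{\square_d}|Q_{G_{d,n}}f|\,d\mathrm{x}=\Oc(\varepsilon_iI_{i,\square_d})$ by Lemma~\ref{lem:Jac}\ref{item:volume_element_difference} (with $I_{i,\triangle_d}=I_{i,\square_d}$ via the same substitution). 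By the definitions of $P^{*}$ and $\omega_{\mathrm{p}^{*}}$, the pull-back sum on the right-hand side is the underlying $M$-exact simplex rule $\{\mathrm{q},\omega_{\mathrm{q}}\}$ applied to the map $\mathrm{y}\mapsto|\det D\sigma^{-1}(\mathrm{y})|\,Q_{G_{d,n}}f(\sigma^{-1}(\mathrm{y}))\sqrt{\det(\Psi_i)(\sigma^{-1}(\mathrm{y}))}$; replacing $\sqrt\cdot$ by $\tilde s_l$ there changes it by $\Oc(v^{l+1})$, and $\int_{\triangle_d}|\det D\sigma^{-1}(\mathrm{y})|\,R_{i,l}(\sigma^{-1}(\mathrm{y}))\,d\mathrm{y}=\int_{\square_d}R_{i,l}\,d\mathrm{x}$. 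Thus it remains to bound the error made by the $M$-exact simplex rule on the integrand $\mathrm{y}\mapsto|\det D\sigma^{-1}(\mathrm{y})|\,R_{i,l}(\sigma^{-1}(\mathrm{y}))$.

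The last point is where I expect the real difficulty. Unlike on the cube, this integrand is \emph{not} a polynomial on $\triangle_d$ when $\sigma$ is only a diffeomorphism of the interiors; for square-squeezing it is even unbounded, since $|\det D\sigma_*^{-1}|$ carries an integrable $1/\sqrt{\cdot}$-type singularity at the image of one cube vertex (where the degree-one polynomial $\det D\sigma_*$ vanishes). Hence the naive exactness-plus-best-approximation bound does not apply, and one needs a sharper estimate of how well a symmetric Gauss simplex rule of degree $M=2kdl+n$ reproduces such a pull-back — either by exploiting the explicit algebraic form of $\sigma_*$ (so that the only obstruction, $1/\det D\sigma_*$, has a known structure concentrated at a single boundary point), or by carrying out the analysis directly on the original simplex parametrization $\varrho_i$ rather than through $\sigma^{-1}$ — and arranging that whatever residual error it contributes is absorbed into the $\Oc(v^{l+1})$ and $\Oc(\varepsilon_iI_{i,\square_d})$ terms, all while keeping the constant $v$ independent of $l$.
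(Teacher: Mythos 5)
Your treatment of part~\ref{Q1} is essentially the paper's own argument: the paper likewise replaces the square root by a degree-$l$ univariate polynomial in $\det(\Psi_i)$ --- concretely, it writes $\sqrt{\det\Psi_i}=\kappa\sqrt{1+x}$ with $x=\kappa^{-2}\det\Psi_i-1$ and truncates the binomial series after $l$ terms, which is exactly the role played by your Bernstein approximant $\tilde s_l$ --- and then invokes exactness of the Gauss--Legendre rule on $\Pi_{d,M}$ with the same degree count ($2kd$ per power of $\det\Psi_i$, hence $M=2kdl+n$) and the same geometric remainder. Both versions need $\det\Psi_i$ bounded away from zero so that the expansion variable stays in $(-1,1)$; you state this explicitly, the paper leaves it implicit. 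For part~\ref{Q2} your decomposition (chain rule on the Gram determinant, change of variables back to $\square_d$, the $\Oc(\varepsilon_i I)$ term from swapping $\Phi_i$ for $\Psi_i$, then the pull-back rule) is also identical to the paper's.

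The difficulty you flag at the end of part~\ref{Q2} is genuine, but you should know that the paper's proof does not resolve it either: it simply writes the final equality, i.e., it asserts that the pull-back of the order-$N^*$ simplex rule reproduces $\int_{\square_d}Q_{G_{d,n}}f\,\sqrt{\det\Psi_i}\,d\mathrm{x}$ up to $\Oc(v^{l+1})$, without addressing the fact that the corresponding integrand on $\triangle_d$, namely $\mathrm{y}\mapsto|\det D\sigma^{-1}(\mathrm{y})|\,R_{i,l}(\sigma^{-1}(\mathrm{y}))$, is not a polynomial on the simplex (and for $\sigma=\sigma_*$ is unbounded at the image of the cube vertex where $\det D\sigma_*$ vanishes, exactly as you observe). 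The exactness hypothesis on the simplex rule is therefore never actually brought to bear in the published proof of~\ref{Q2}; ``\ref{Q2} follows from \ref{Q1}'' is a claim, not a derivation. So you have not missed an ingredient that the paper supplies --- you have located the one step that is asserted rather than proved --- and either of the repairs you sketch (exploiting the explicit algebraic structure of $\sigma_*^{-1}$ near the single singular boundary point, or carrying out the error analysis directly on $\varrho_i$) would be a genuine addition rather than a reproduction of the paper's argument.
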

\begin{proof}
To prove \ref{Q1}, we choose $\kappa >\|\sqrt{\det(\Psi_i)}\|_{C^0(\square_d)}$ and rewrite: 
\begin{equation*}
\kappa\sqrt{\det\big(\frac{1}{\kappa^2}\Psi_i(\mathrm{x})\big)} = \kappa\sqrt{1+x}\,, \,\,x =\frac{1}{\kappa^2}\det(\Psi_i(\mathrm{x})) -1\,.
\end{equation*}
We recall that $\sqrt{1 + x} = \sum_{s=0}^\infty\frac{(-1)^s2s!}{(1-2s)(s!)^2(4^s)}x^s$, for $|x|<1$, and deduce that
\begin{equation*}
\int_{\square_d} Q_{G_{d,n}}f(\mathrm{x}) \kappa\sqrt{\det(\frac{1}{\kappa^2}\Psi_i(\mathrm{x}))}\,d\mathrm{x} =  \int_{\square_d}Q_{G_{d,n}}f(\mathrm{x}) Q(\mathrm{x}) d\mathrm{x} + \Oc(v^{l+1})\,,
\end{equation*}
where $Q$ has $l_\infty$-degree $M -n$. Hence,   $\int_{\square_d} Q_{G_{d,n}}f(\mathrm{x})  Q(\mathrm{x}) d\mathrm{x}=  \sum_{\mathrm{p} \in P}\omega_{\mathrm{p}} 
 Q_{G_{d,n}}f(\mathrm{p}) Q_{l}(\mathrm{p})$ can be computed exactly due to the Gauss--Legendre quadrature of order $N$. 
Consequently, \ref{Q1} is proven. Now \ref{Q2} follows from \ref{Q1} by 
\begin{multline*}
\int_{\triangle_d} Q_{G_{d,n}}f(\sigma^{-1}(\mathrm{y})) \sqrt{\det\big((D\varrho_i(\mathrm{y}))^T D\varrho_i(\mathrm{y})\big)}\,d\mathrm{y} \\
\begin{aligned}
&= \int_{\triangle_d} Q_{G_{d,n}}f(\sigma^{-1}(\mathrm{y})) \sqrt{\det\big((D\sigma^{-1}(\mathrm{y}))^T (D\varphi_i(\sigma^{-1}(\mathrm{y})))^T D\varphi_i(\sigma^{-1}(\mathrm{y}))D\sigma^{-1}(\mathrm{y})\big)}\, d\mathrm{y}\\ 
& = 
\int_{\triangle_d} Q_{G_{d,n}}f(\sigma^{-1}(\mathrm{y})) \sqrt{\det(\Psi(\sigma^{-1}(\mathrm{y})))\det(D\sigma^{-1}(\mathrm{y})^TD\sigma^{-1}(\mathrm{y}))}\, d\mathrm{y}\\
& + \int_{\square_d} Q_{G_{d,n}}f(\mathrm{x}) \Big(\sqrt{\det(\Phi_i(\mathrm{x}))} - \sqrt{\det(\Psi_i(\mathrm{x}))}\Big) d\mathrm{x}\\
&= \sum_{\mathrm{p^*} \in P^*}\omega_{\mathrm{p^*}} 
 Q_{G_{d,n}}f(\mathrm{p^*})\sqrt{\det(\Psi_i(p^*))} + \Oc(\varepsilon_iI_{i,\triangle_d}) 
+ \Oc(v^{l+1})\,,
 \end{aligned}
\end{multline*}
proving the statement.
\end{proof}

\begin{remark} In fact, Corollary~\ref{cor:pull}~\ref{Q2} applies for the square-squeezing transformation $\sigma_*$ and Duffy transformation $\sigma_{\text{Duffy}}$ in combination with the symmetric Gauss quadrature $\mathrm{q} \in P,\omega_\mathrm{q}$ of the triangle $\triangle_2$ \cite{dunavant1985high} (both are diffeomorphisms in the interior $\mathring{\square}_2$ and $P\subseteq  \mathring{\triangle}_2$).
\end{remark}
While Corollary~\ref{cor:pull} suggests the necessity of a high order  quadrature, $M \gg k,n$, as part of the next section, we empirically find that choosing $M=k=n$ equally to the interpolation degrees suffices for achieving computations reaching machine precision.  
 

\section{Numerical experiments}\label{sec:NUM}

We now demonstrate the quality of the HOVE surface integration
method described in Section~\ref{sec:APPP_HOS}, by presenting several numerical experiments.
We focus on the important case of two-dimensional manifolds
exclusively. We triangulate these manifolds by first
approximating them by piecewise affine triangulations
in $\R^3$, constructed by the algorithm of Persson and Strang~\cite{Persson}.
The flat triangles are then equipped
with the Euclidean closest-point
projections, approximating the maps $\pi_i : T_i \to S$
as described in Remark~\ref{rem:affine_mesh_with_projections}.


We compare HOVE with the \textsc{Dune-CurvedGrid} integration algorithm (DCG),
included in the surface-parametrization module \texttt{dune-curvedgrid} \cite{CurvedGrid} of the \textsc{Dune} finite element framework.%
\footnote{\url{www.dune-project.org}}
As  discussed in \cite{Zavalani23}, DCG interpolates the closest-point projection directly on each triangle,
using $l_1$-degree polynomials on a uniform point set. 

If not stated otherwise, HOVE uses square-squeezing pull-backs of symmetric
Gauss triangle rules \cite{dunavant1985high} as quadratures on $\square_2$ (Corollary~\ref{cor:pull}~\ref{Q2}). Similarly, for DCG, we also make use of symmetric Gauss triangle rules of the same degree as used in HOVE.

Our implementation of HOVE is part of a Python package called \textsc{surfgeopy}.%
\footnote{\url{https://github.com/casus/surfgeopy}
}
The examples and results of this manuscript using \textsc{Dune-CurvedGrid}  
are summarized and made available in a separate repository.%
%
\footnote{\url{https://github.com/casus/dune-surface_int}
}

\subsection{Duffy-transform-integration vs square-squeezing-integration  }
This first experiment investigates the impact of interpolaing the volume element of the sphere with $1^{\text{st}}$ or $2^{\text{nd}}$ kind Chebyshev nodes in conjunction with the Duffy and square-squeezing transformations, respectively, and posterior computing the area of one octant of the unit sphere. Hereby,  Fejér’s rule is applied for $1^{\text{st}}$ kind Chebyshev nodes, while the Clenshaw-Curtis quadrature is employed for $2^{\text{nd}}$ kind Chebyshev nodes, each of order equal to the interpolation degree.  
\begin{figure}[!t]
\centering
\begin{subfigure}[t]{0.48\textwidth}  
    \centering
    \includegraphics[width=\textwidth]{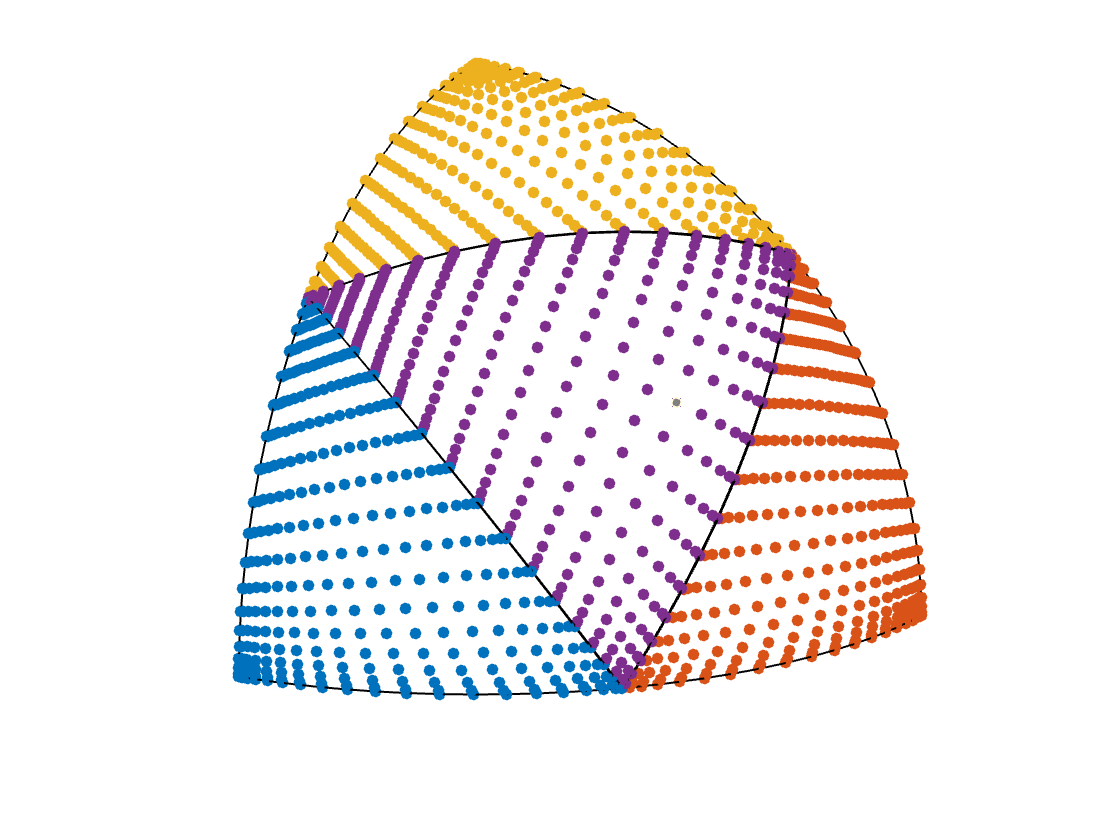}
    \caption{Chebyshev nodes of the $1^{\text{st}}$ mapped by the Duffy transformation.}
    \label{fig:first_kind}
\end{subfigure}%
\hfill
\begin{subfigure}[t]{0.48\textwidth}  
    \centering
    \includegraphics[width=\textwidth]{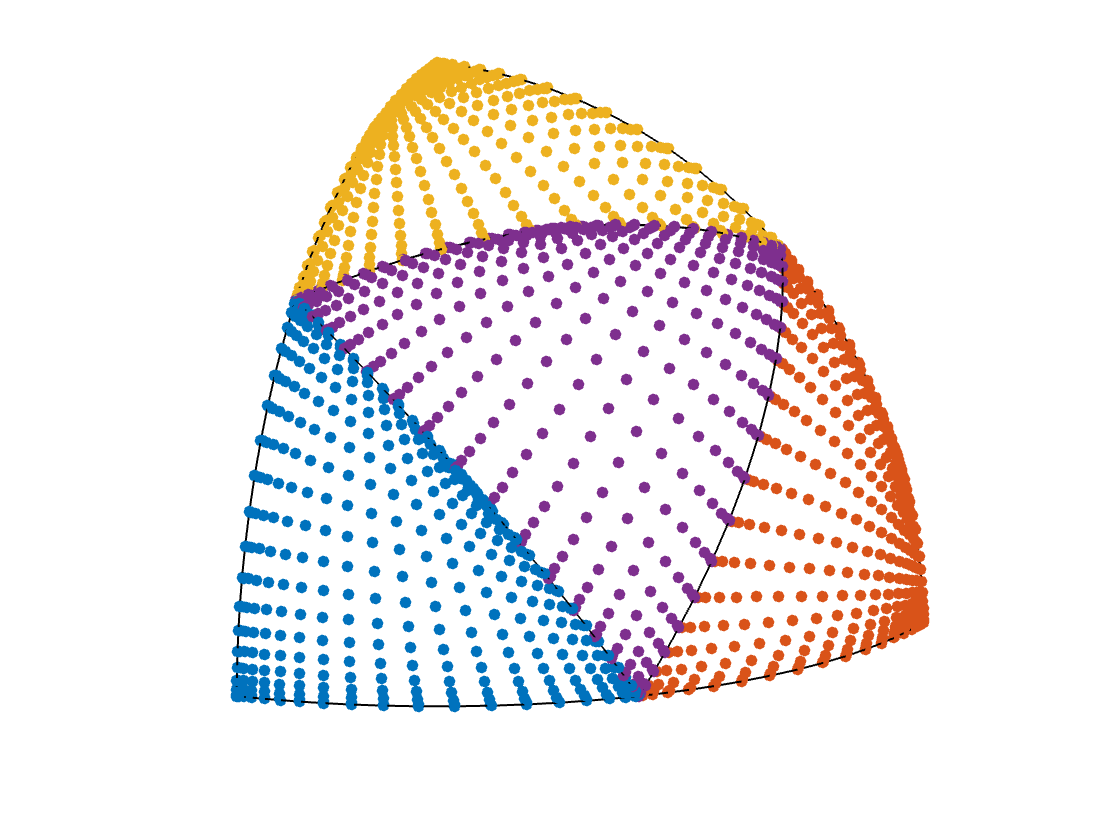}
    \caption{Chebyshev nodes of the $2^{\text{nd}}$ mapped by square-squeezing.}
    \label{fig:second_kind}
\end{subfigure}


\begin{subfigure}[b]{0.48\textwidth}  
    \centering
    \includegraphics[width=\textwidth]{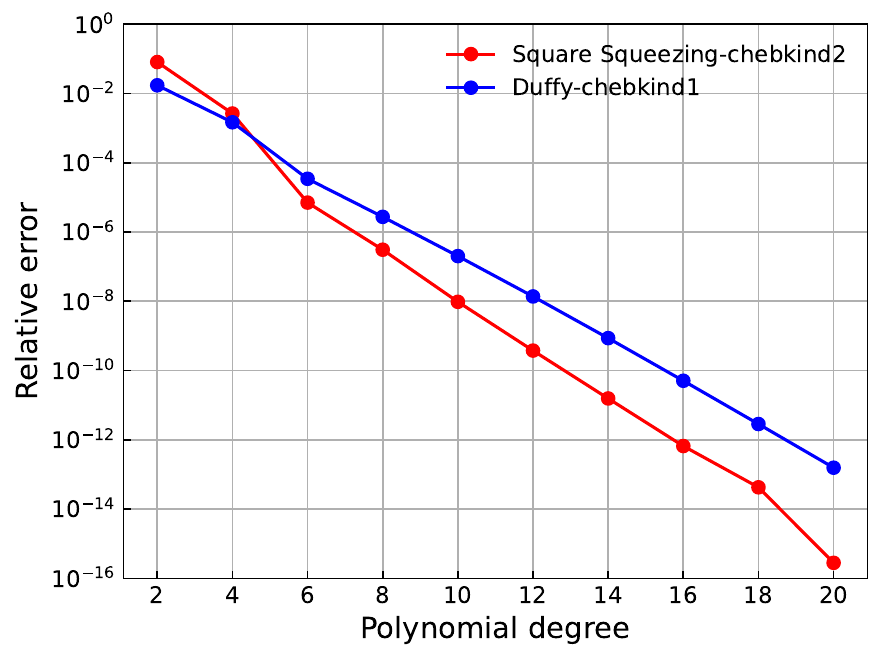}
    \caption{Square-squeezing-integration with $2$nd kind Chebyshev nodes vs Duffy with $1$st kind Chebyshev nodes}
    \label{fig:rel_second_kind}
\end{subfigure}%
\hfill
\begin{subfigure}[b]{0.48\textwidth}  
    \centering
    \includegraphics[width=\textwidth]{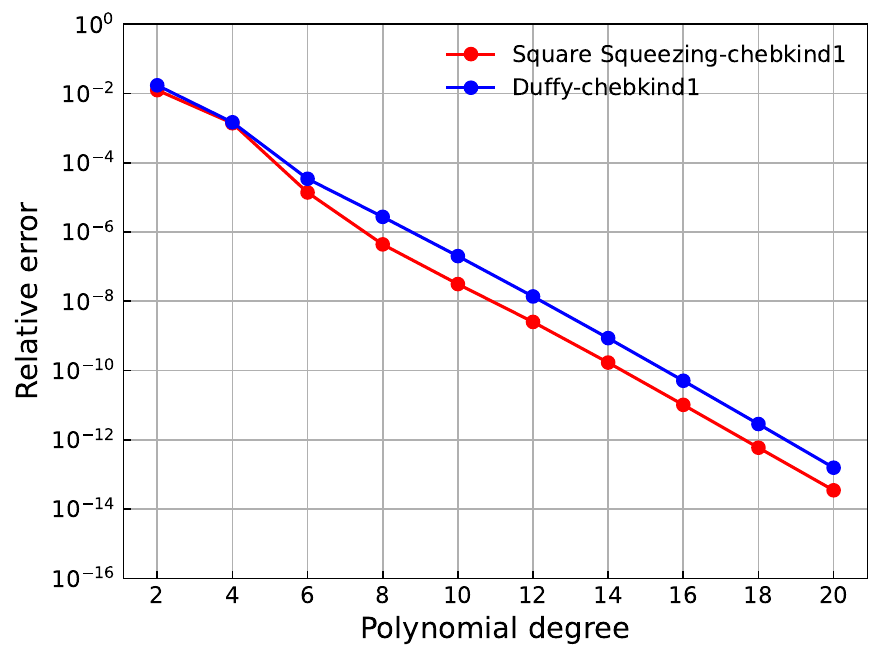}
    \caption{Square-squeezing-integration vs Duffy-transform-integration with $1$st kind Chebyshev nodes for both.}
    \label{fig:ss_duffy_cheb1}
\end{subfigure}

\caption{Chebyshev nodes mapped onto a triangulation of one octant of the unit sphere (\ref{fig:first_kind}), (\ref{fig:second_kind}), along with the relative error of square-squeezing-integration (\ref{fig:rel_second_kind}) and Duffy-transform-integration (\ref{fig:ss_duffy_cheb1}).}
\end{figure}

Fig.~\ref{fig:rel_second_kind} shows the appearing relative errors. In both cases we observe an exponential error decay. However, square-squeezing-integration achieves two orders of magnitude higher accuracy. Specifically, for $\deg = 20$ 
Duffy-transform-integration results in an error of $2.4736\times 10^{-13}$, while  square-squeezing-integration  achieves $4.4409 \times 10^{-16}$. 
Additionally, comparing both transformations, relying on $1$st kind Chebyshev nodes, Fig.~\ref{fig:ss_duffy_cheb1}., still shows and advantage of exploiting square-squeezing instead of the Duffy transformation.

Given that significant enhancement in accuracy performance already for this simple integration task suggests a high impact of the HOVE approach, being further investigated below.

\subsection{Surface area}

\begin{figure}[t!]
\begin{subfigure}{.5\textwidth}
  \centering
  \includegraphics[clip,width=1\columnwidth]{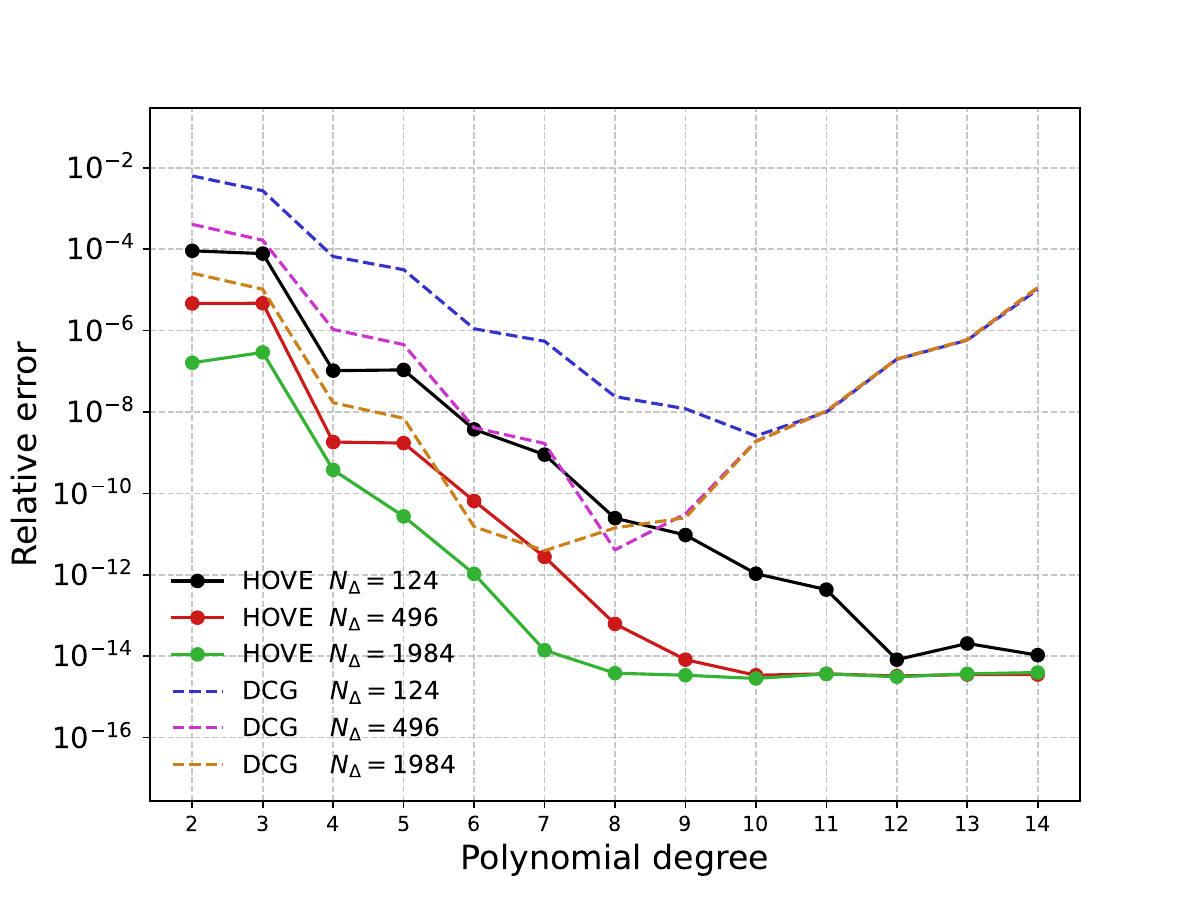}
  \caption{Unit sphere}
  \label{fig:US3}
\end{subfigure}%
\begin{subfigure}{.5\textwidth}
  \centering
 \includegraphics[clip,width=1.\columnwidth]{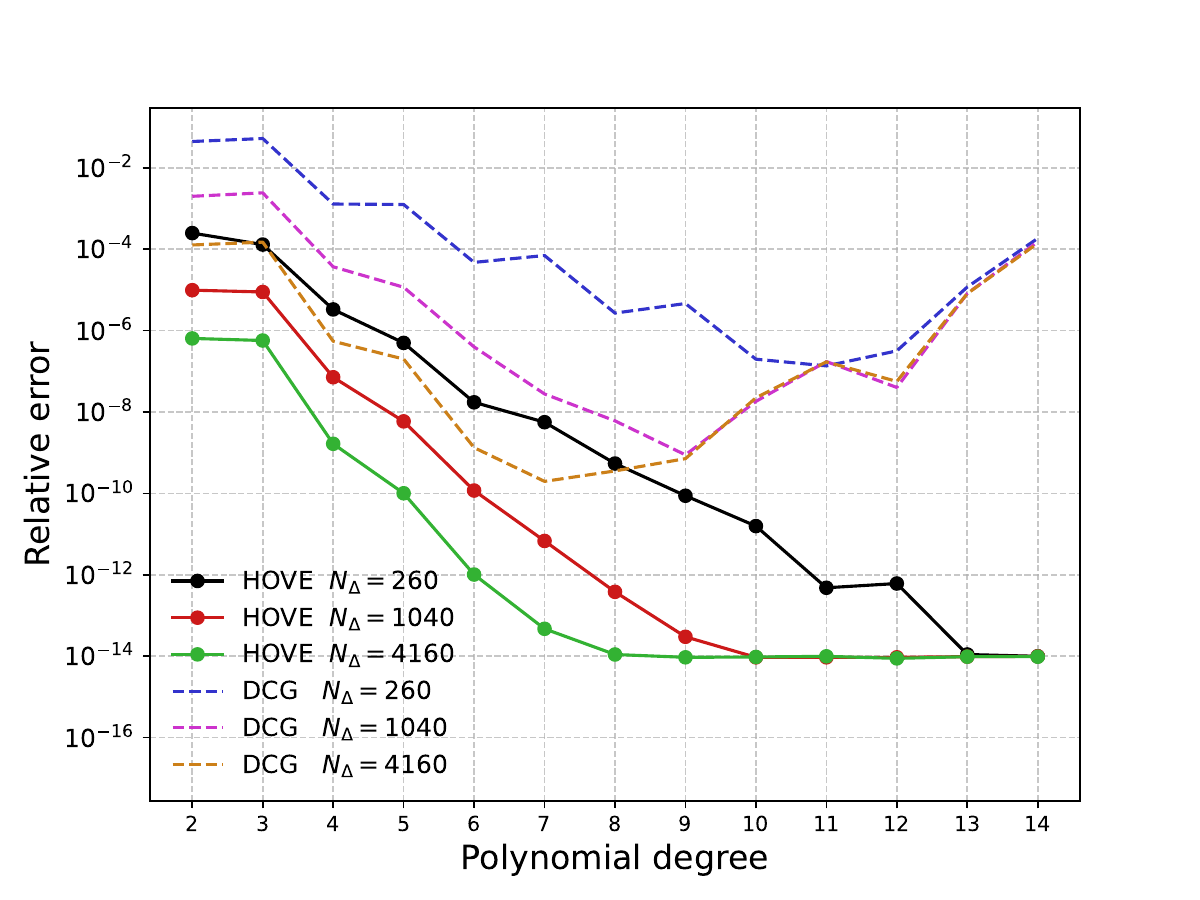}
  \caption{Torus with radii $r=1$ and $R=2$}
  \label{fig:US4}
\end{subfigure}
\caption{Relative errors of DCG and HOVE for surface area of the unit sphere and the torus,
using three different meshes}
\label{fig:R}
\end{figure}
\begin{figure}[t!]
\begin{subfigure}{.5\textwidth}
  \centering
  \includegraphics[clip,width=1\columnwidth]{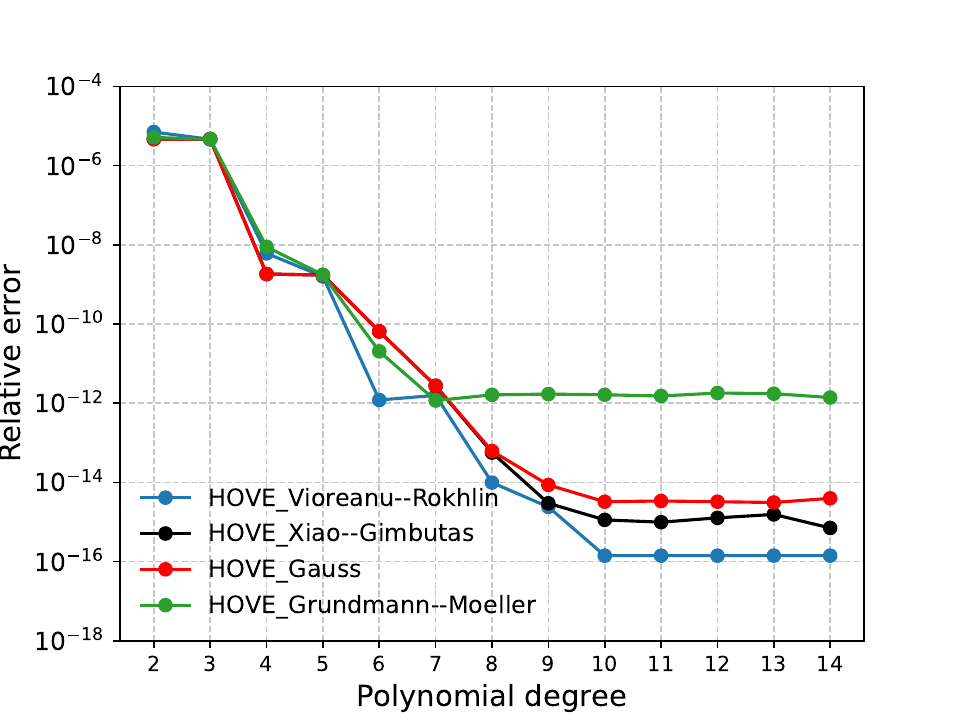}
  \caption{Unit sphere}
  \label{fig:US5}
\end{subfigure}%
\begin{subfigure}{.5\textwidth}
  \centering
 \includegraphics[clip,width=1.\columnwidth]{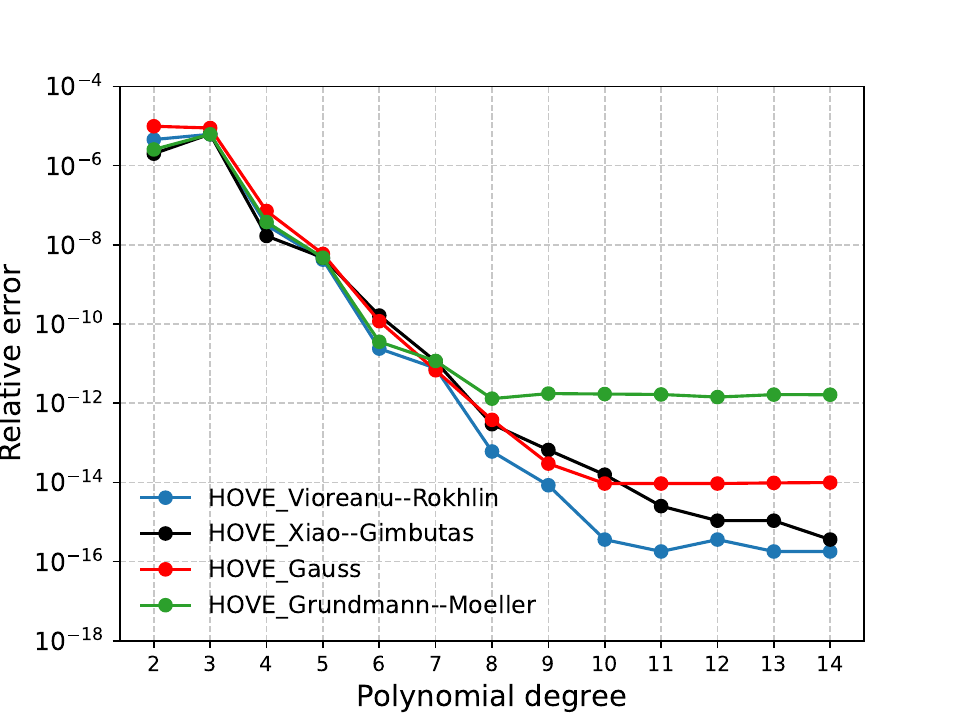}
  \caption{Torus with radii $r=1$ and $R=2$}
  \label{fig:US6}
\end{subfigure}
\caption{Relative errors of HOVE, using Vioreanu–Rokhlin, Xiao–Gimbutas, symmetric Gauss, and Grundmann–Moeller simplex rules, integrating the surface areas of the unit sphere and the torus.}
\label{fig:RE}
\end{figure}

\begin{figure}[t!]
    \begin{minipage}{0.5\textwidth}
        \centering
        \begin{table}[H]
                 \caption{Mesh data}
            \label{tab:IDS+HOSQ}
            \centering
            \begin{tabular}{@{}ccc@{}}
                \toprule
                mesh & \# vertices & \# vertices for IDS \cite{Ray2012} \\
                \midrule
                0 & 272 & 544 \\
                1 & 1088 & 1896 \\
                2 & 4352 & 7528 \\
                3 & 17\,408 & 31\,392 \\
                \bottomrule
            \end{tabular}
        \end{table}
    \end{minipage}
    \begin{minipage}{0.49\textwidth}
        \centering
        \includegraphics[clip,width=0.96\linewidth]{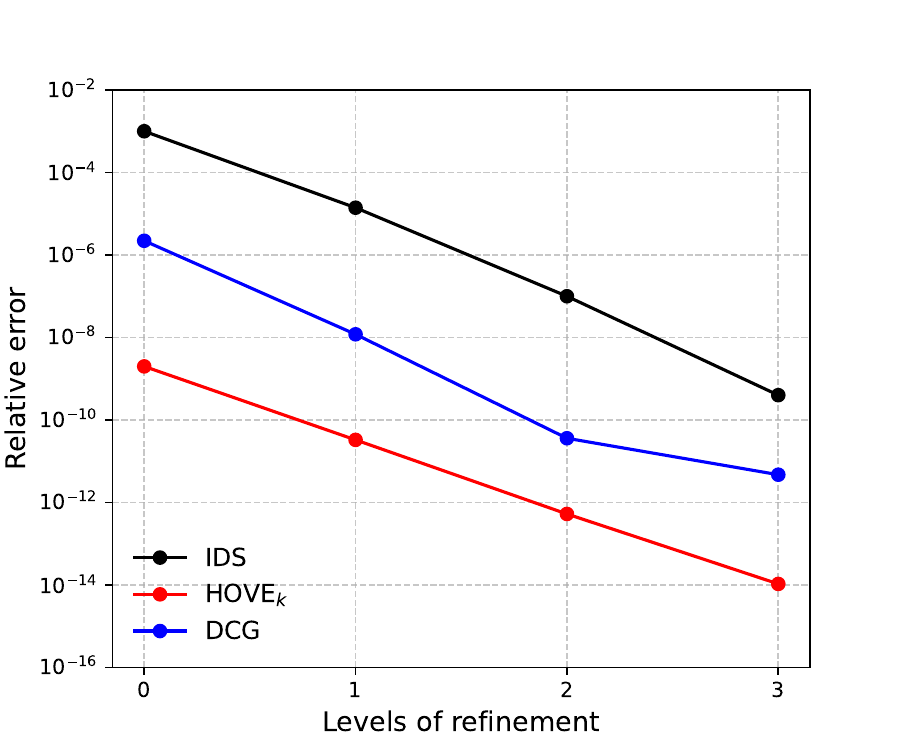}
    \end{minipage}
    \caption{Relative errors of IDS \cite{Ray2012}, DCG, and HOVE$_{k}$ for the surface area of the torus, using a polynomial of degree $6$, are presented on four different meshes, as detailed in the table on the left. }
    \label{fig:IDS}
\end{figure}

The next experiment is the first to involve an actual
integration over a manifold $S$. 
We integrate the
constant function $f=1$ over the unit sphere $\mathrm{S}^2$ and the torus $T^2_{r,R}$ with inner radius $r=1$ and outer radius $R=2$.
The expected result is the surface area, which is
$4\pi$ for the unit sphere and $4\pi^2 rR$
for the torus. We choose initial triangulations of size $N_{\Delta}=124$ for the sphere and of size $N_{\Delta}=260$ for the torus and apply the symmetric Gauss quadrature rule for the triangle $\triangle_2$ of $\deg=14$ with $42$ quadrature nodes~\cite{dunavant1985high}. 

Note that as the integrand $f$ is constant, its
approximation $Q_{G_{d,n}}$ is $f$ itself, and there is
no approximation error.

Fig.~\ref{fig:R} shows the relative errors with respect to the degree of the polynomial interpolation of the geometry.
HOVE stably converges to machine precision with a high algebraic rate, as predicted by Theorem \ref{Main.Thm}.
In contrast, DCG
becomes unstable for orders larger than $\deg =8$. We interpret the instability as the appearance of Runge's phenomenon caused by the choice of equidistant interpolation nodes for DCG.
Indeed, Fig.~\ref{fig:lebesgue_constant} shows a significant difference of the corresponding Lebesgue constants arising for order $k\geq 6$.

Additionally, for each initial mesh, we use {HOVE} with square-squeezing pull-backs of  state-of-the-art simplex quadrature rules (Corollary \ref{cor:pull} \ref{Q2}), including   the symmetric Gauss rule \cite{dunavant1985high}, the Grundmann--Möller quadrature \cite{grundmann1978invariant}, the Xiao--Gimbutas quadrature \cite{xiao2010numerical}, and the Vioreanu--Rokhlin simplex quadrature \cite{vioreanu2014spectra}. Fig.~\ref{fig:RE} shows the relative errors, demonstrating superior accuracy of the HOVE--Vioreanu--Rokhlin rule, but only in the range of machine precision ($10^{-14} \sim 10^{-15}$). However, the Grundmann-Möller quadrature is outperformed by all other rules, which might be attributed to the presence of its negative and positive weights.


\begin{remark}
In the case of the torus  Ray~et~al.~\cite{Ray2012}, conducted a very similar experiment for tori of radii $r=0.7$, $R =1.3$, using the
\emph{High-Order Integration over Discrete Surfaces} (IDS) algorithm \cite[Fig.~5]{Ray2012}, resting on total $l_1$-interpolation degree $k$, with maximum choice $k = 6$.
We perform the same experiment here for  DGC and HOVE with interpolation degree $k =6$,
employing an initial mesh composed of $544$ triangles or equivalently $272$ vertices. We subsequently refine the mesh three times, resulting in similar but coarser meshes than the ones reported by \cite{Ray2012}
Fig.~\ref{fig:IDS} reports the mesh sizes and the relative errors of all methods.

Even though IDS uses meshes of higher resolution, both DCG and HOVE outperform IDS. 
For the rest of this section, we will therefore
disregard the IDS algorithm and only compare 
DCG and HOVE, whereas, for the sake of simplicity, the latter is executed for symmetric or tensorial Gauss rules.
\end{remark}


\begin{figure}[!h]
\begin{subfigure}{.5\textwidth}
  \centering
  \includegraphics[clip,width=0.8\columnwidth]{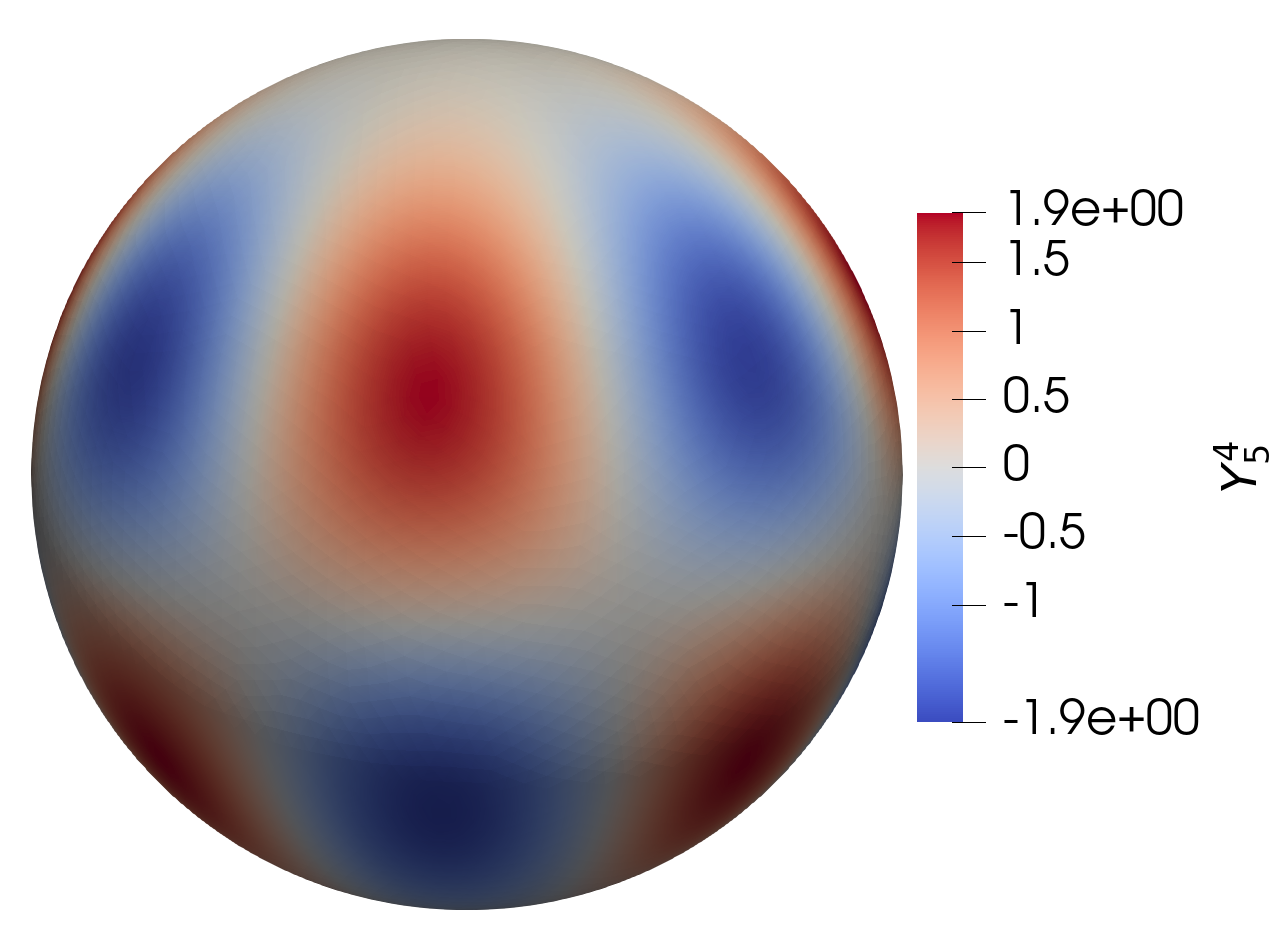}
  $ $ \\
  \hfill
  \caption{Spherical harmonic $Y_5^4$}
  \label{fig:SH1}
\end{subfigure}%
 \hfill
\begin{subfigure}{0.5\textwidth}
  \centering
 \includegraphics[clip,width=1.\columnwidth]{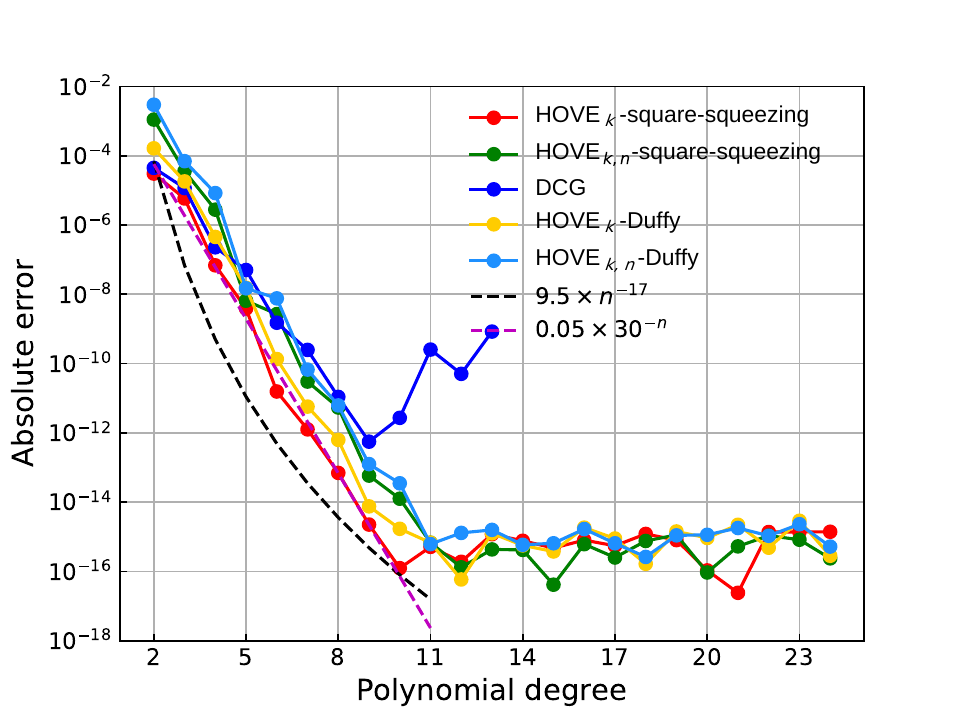}
  \caption{Integration errors}
  \label{fig:SH2}
\end{subfigure}
\caption{Visualization of the spherical harmonic $Y_{5}^{4}$~(left).
Integration errors of DCG and HOVE with respect to the interpolation degree. Abbreviations:  $\text{HOVE}_{k}$ -- interpolating only the geometry, $\text{HOVE}_{k,n}$ -- interpolating  the geometry and the integrand.
\label{fig.duffy_sm}
}
\end{figure}

\subsection{Spherical harmonics}\label{sec:SH}

The next experiment uses a non-constant integrand.
We integrate the $4^{\text{th}}$-order spherical harmonic, visualized in Fig.~\ref{fig.duffy_sm},
over the unit sphere $S^2 \subset \R^3$
\begin{equation*}
 \int_S Y_5^4\,dS
 =0 \,, \quad 
Y^{4}_{5}(x_1,x_2,x_3) 
=
\frac{3\sqrt{385}(x_1^{4}-6x_2^{2}x_1^{2}+x_2^{4})x_3}{16\sqrt{\pi}}\,,
\end{equation*}
vanishing by the $L_2$-orthogonality 
 of the spherical harmonics. 
We approximate the unit sphere by a piecewise flat mesh with
$496$ triangles and compare DCG, HOVE,
and HOVE with the Duffy transformation.
The actual integration is performed using a
symmetric Gauss triangle rule \cite{dunavant1985high} of order $\deg =25$.
Fig.~\ref{fig.duffy_sm}~(right) shows the absolute integration errors as a function of
the polynomial degree for two interpolation scenarios: 

\begin{enumerate}
    \item $\text{HOVE}_{k}$ -- only interpolating the geometry and sampling the integrand directly in the quadrature nodes of a degree-$k$-rule. 
    \item  $\text{HOVE}_{k,n}$ -- interpolating the integrand and the geometry with degree $n=k$ and posterior computing the approximated integral by a degree-$k$-rule.
\end{enumerate}
Both $\text{HOVE}_{k}$ and $\text{HOVE}_{k,n}$ converge with an exponential rates, $0.05\cdot 30^{-n}$ fitted for $\text{HOVE}_{k}$,  as predicted by Theorem~\ref{Main.Thm}. The best fit of an algebraic rate, $9.5\cdot n^{-17}$, does not assert rapid convergence.
%
%
We observe that all three methods behave similarly
for interpolation degrees below~$9$.
For higher degrees, DCG becomes unstable,
whereas HOVE reaches machine precision for degrees above $10$.
HOVE reaches one-order-of-magnitude higher accuracy when utilizing square-squeezing instead of Duffy's transformation.

\subsection{Integrating the Gauss curvature}
\label{sec:experiment_curvature}

\begin{figure}[!htbp]
\begin{subfigure}{.45\textwidth}
\centering
\includegraphics[scale=0.4]{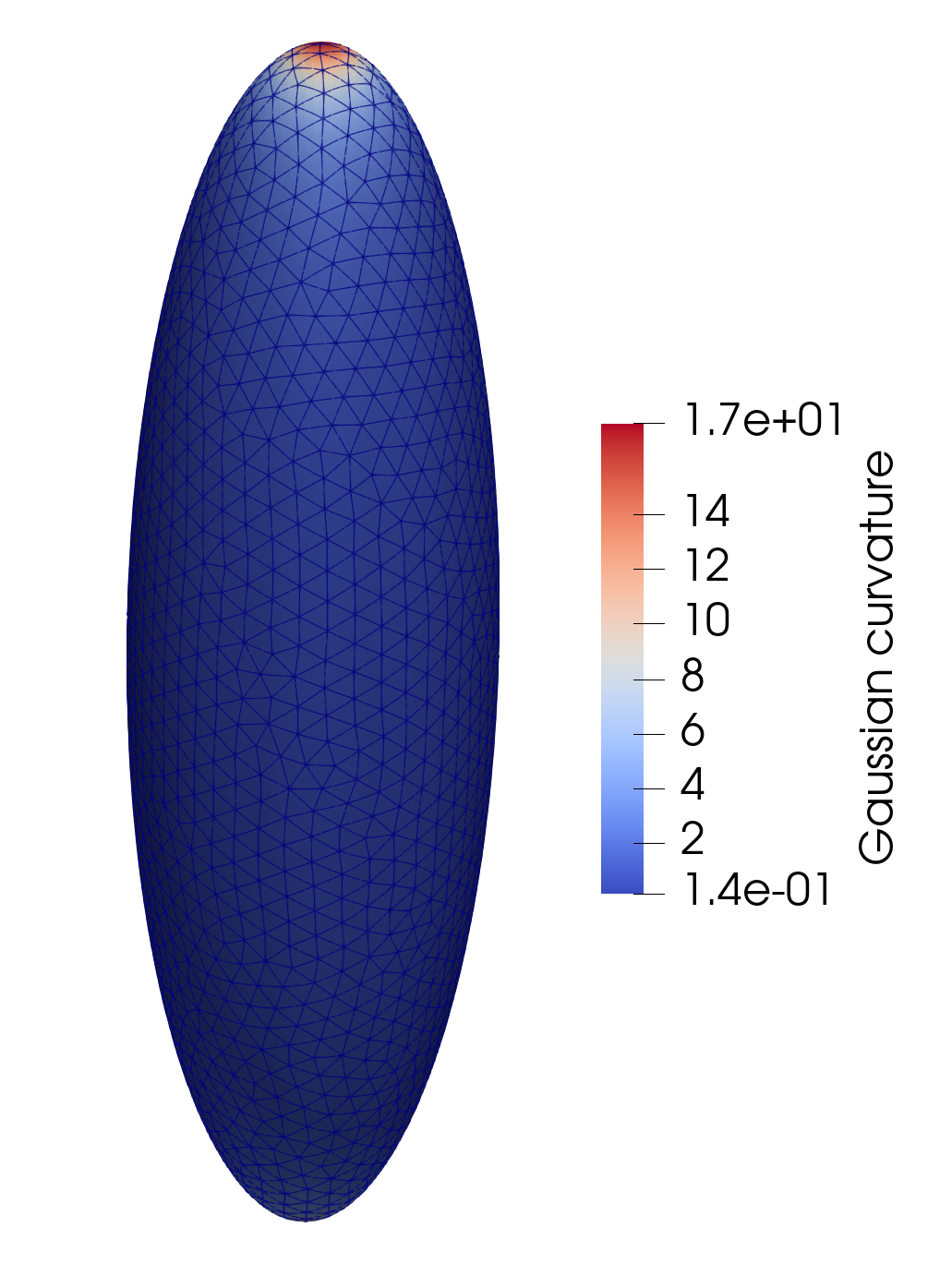}
\caption{Ellipsoid with  $4024$ triangles}
\label{gc.c}
\end{subfigure}%
\begin{subfigure}{.495\textwidth}
 \includegraphics[width=1.0\textwidth]{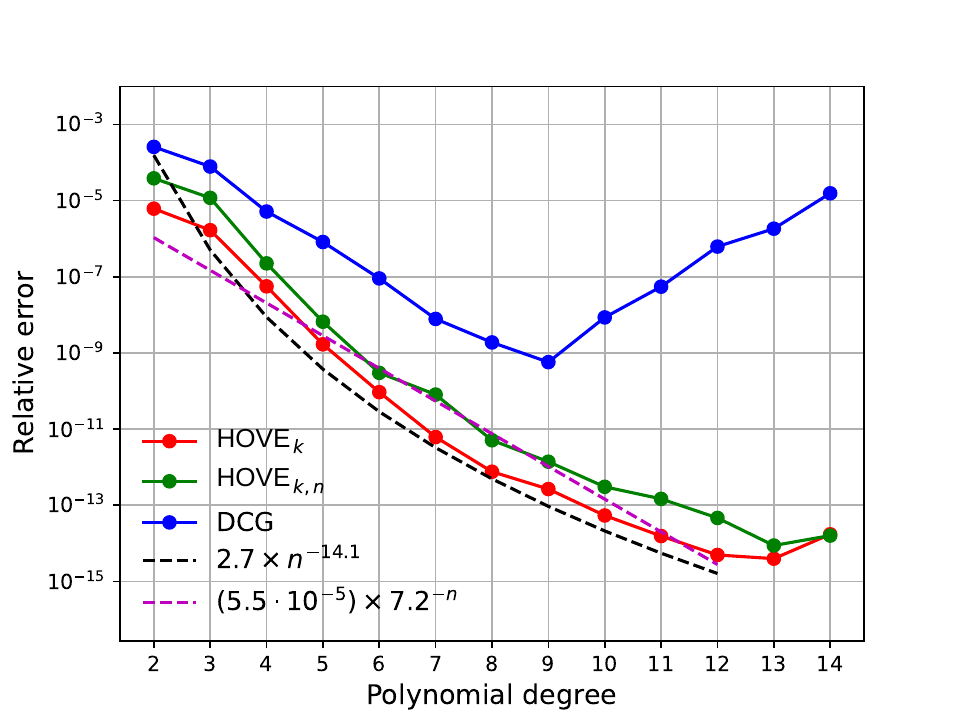}
  \caption{Integration errors }
  \label{r1.c}
\end{subfigure}
\caption{Gauss--Bonnet validation for an ellipsoid with $a=0.6$, $b=0.8$, $c=2$.}
\label{fig:SURF1}
\end{figure}

In this section, we use the Gauss curvature as a non-trivial
integrand. 
By the Gauss--Bonnet theorem \citep{spivak1999,pressley2001}, integrating the Gauss curvature over a closed surface yields
\begin{equation}\label{eq:GB}
    \int_{S}K_{\mathrm{Gauss}}\,dS=2\pi \chi\left(S\right),
\end{equation}
where $\chi\left(S\right)$ denotes the Euler characteristic of the surface. 
We use five surfaces as integration domains.
They are given as the zero sets of the
following five polynomials:

\noindent
\begin{tabular}{lll}
1) & Ellipsoid & $\frac{x^2}{a^2} + \frac{y^2}{b^2} + \frac{z^2}{c^2} = 1$,\quad  $a,b,c \in \R\setminus\{0\}$\\
2) &  Torus & $(x^2 + y^2 + z^2 + R^2 - r^2)^2 - 4R^2(x^2 + y^2) =0$, \quad $0 <r <R \in \R$\\
3) &  Genus 2 surface &  $2y(y^2 - 3x^2)(1 - z^2) + (x^2 + y^2)^2 - (9z^2 - 1)(1 - z^2) = 0$\\
4) & Dziuk's surface & $(x-z^2)^2+y^2+z^2-1 = 0$\\
5) & Double torus & $\big(x^2+y^2)^2-x^2+y^2\big)^2+z^2-a^2=0$ ,\quad  $a \in \R\setminus\{0\}$\\
\end{tabular}

The surfaces, their parameter choices, and the mesh sizes are shown in Fig.~\ref{fig:SURF1}--\ref{fig:SURF5}.
The Gauss curvature is computed symbolically from the
implicit surface descriptions using \linebreak \textsc{Mathematica~11.3}.
HOVE and DCG use (square-squeezing pull-backs of) the symmetric
Gauss simplex rules~\cite{dunavant1985high} of order $14$.

\begin{figure}[!htbp]
\begin{subfigure}{.45\textwidth}
\vfill
\centering
\vspace{-0.5cm}
\includegraphics[scale=0.4]{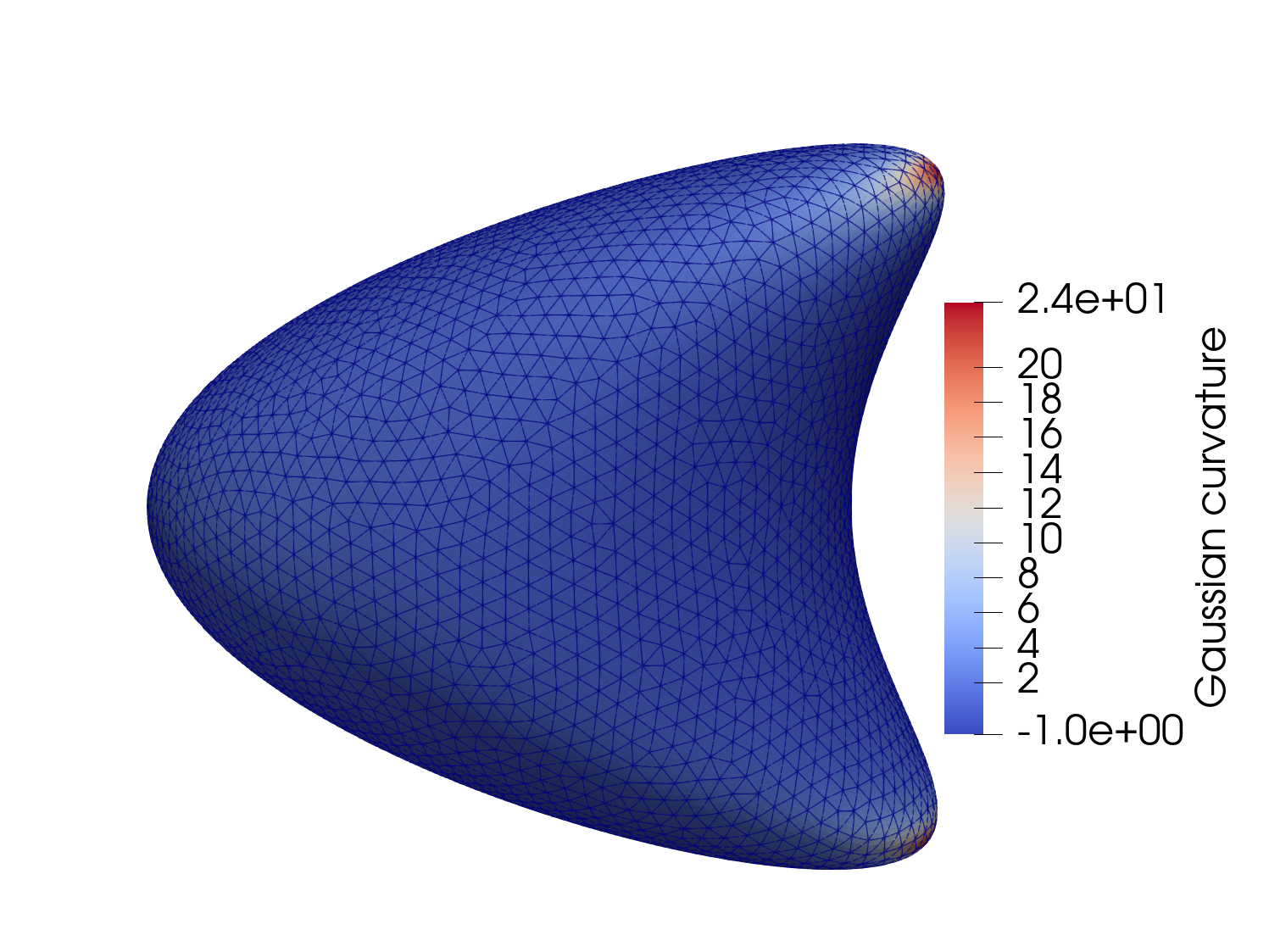}
\caption{Dziuk's surface with $8088$ triangles}
\label{gc.e}
\end{subfigure}
\hfill
\begin{subfigure}{.495\textwidth}
 \includegraphics[width=1.0\textwidth]{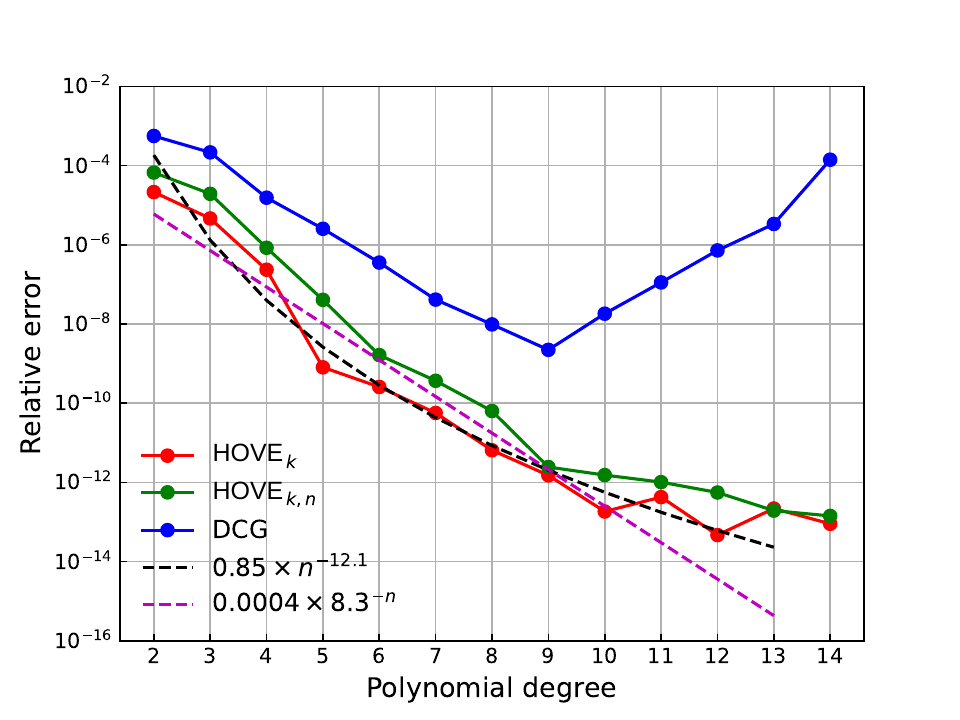}
  \caption{Dziuk's surface}
  \label{r1.f}
\end{subfigure}%
\caption{Gauss--Bonnet validation for Dziuk's surface.}
\label{fig:SURF2}
\end{figure}

\begin{figure}[!htbp]
\begin{subfigure}{.45\textwidth}
\centering
\includegraphics[scale=0.45]{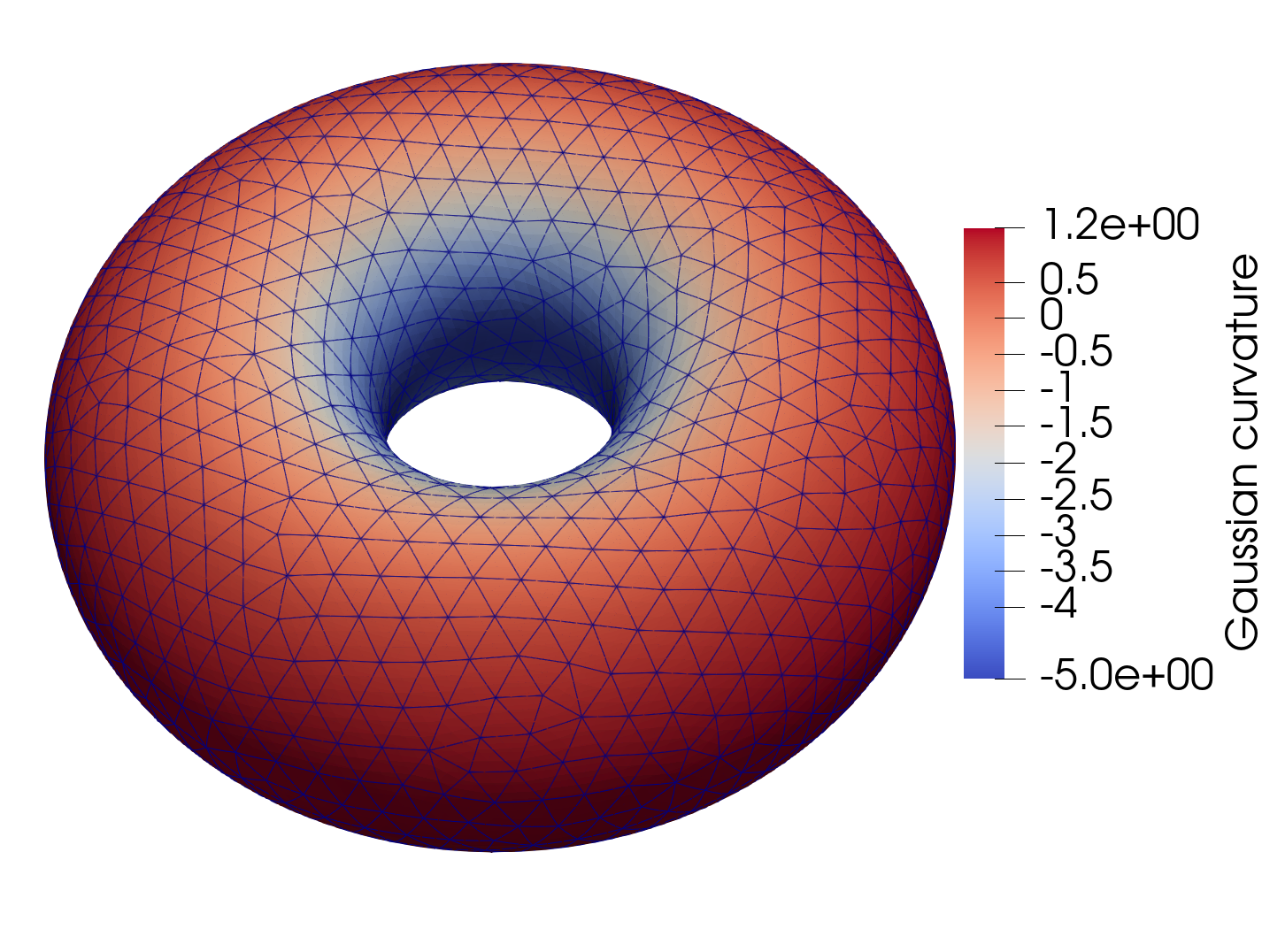}
\caption{Torus  with  $1232$ triangles}
\label{gc.b}
\end{subfigure}
\hfill
\begin{subfigure}{.495\textwidth}
 \includegraphics[width=1.0\textwidth]{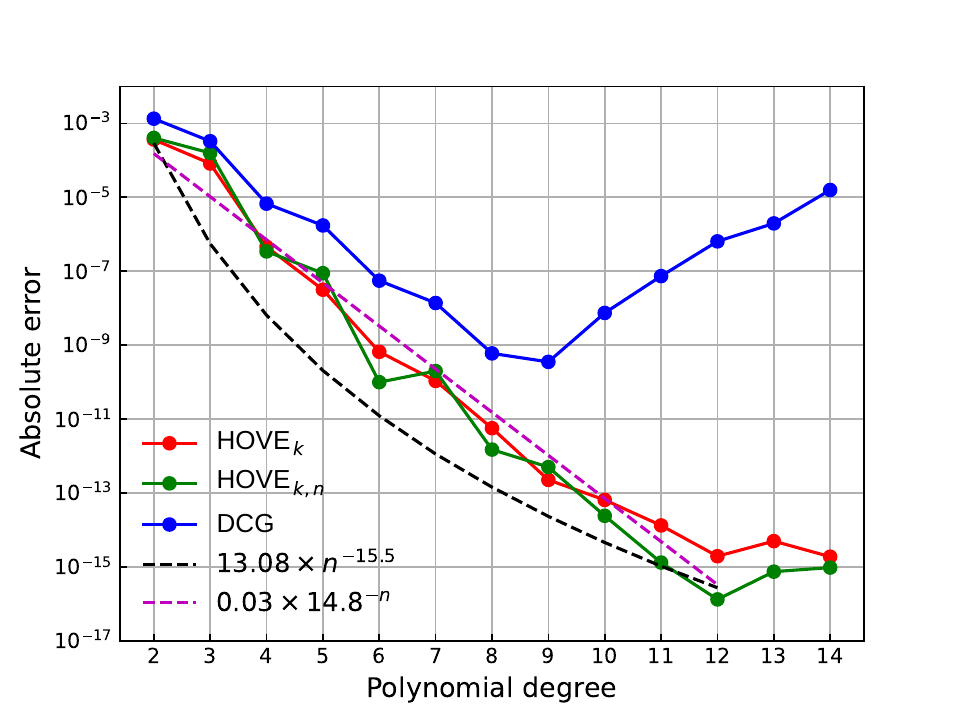}
  \caption{Torus with radii $R=2$, $r=1$}
  \label{r1.d}
\end{subfigure}
\caption{Gauss--Bonnet validation for a torus with radii $R=2$, $r=1$.}
\label{fig:SURF3}
\end{figure}

\begin{figure}[!htbp]
\begin{subfigure}{.45\textwidth}
\centering
\includegraphics[scale=0.45]{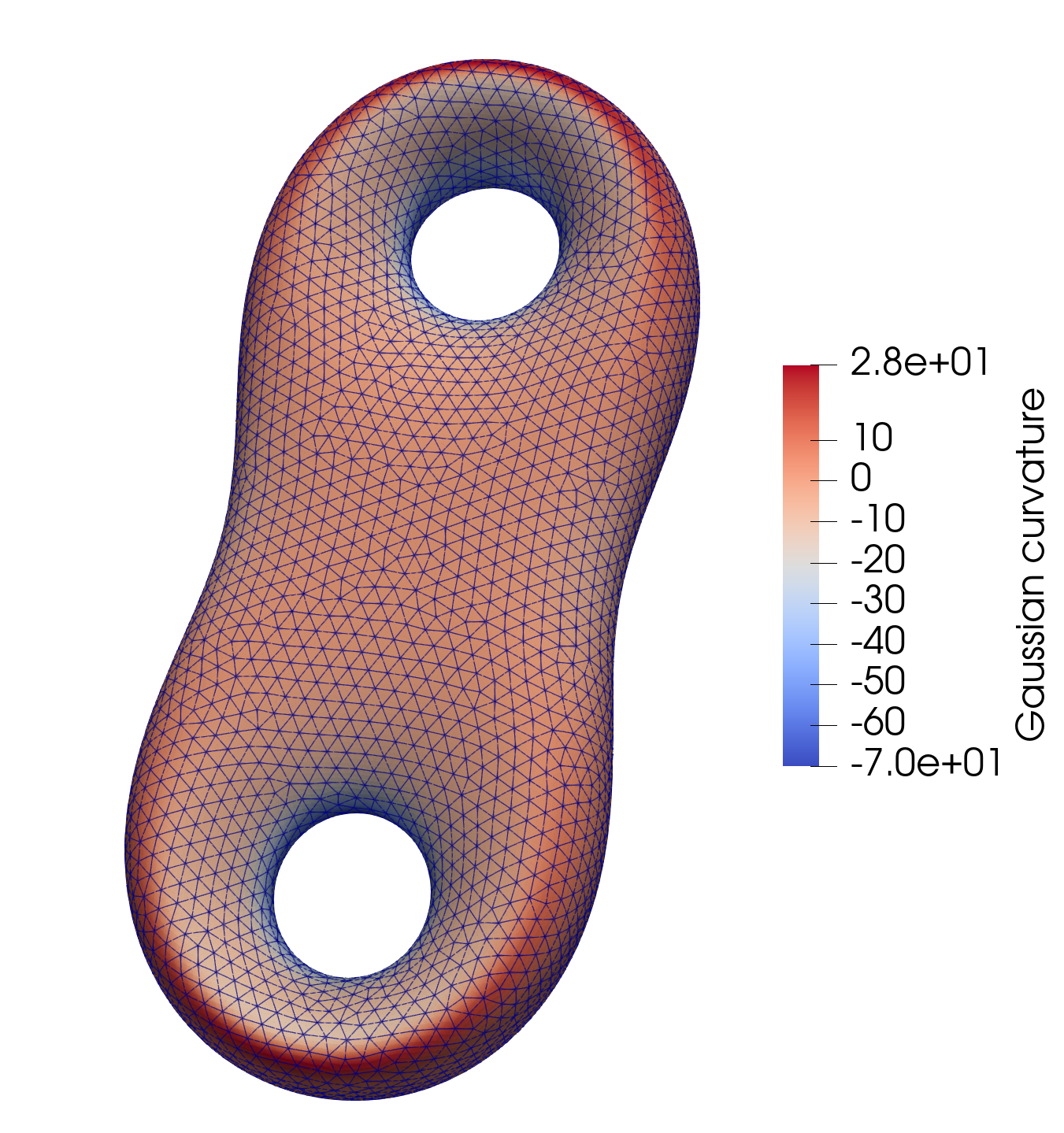}
\caption{Double torus with $8360$ triangles}
\label{gc.f}
\end{subfigure}
\hfill
\begin{subfigure}{.495\textwidth}
 \includegraphics[width=1.0\textwidth]{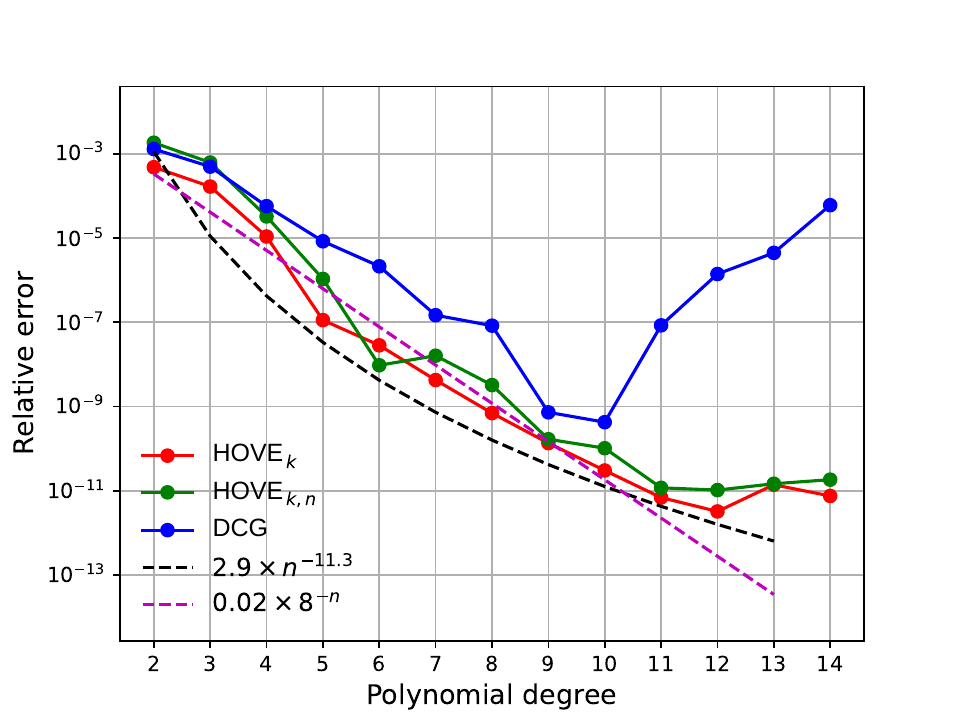}
  \caption{Double torus}
  \label{r1.g}
\end{subfigure}
\caption{Gauss--Bonnet validation for a double torus with $a=0.2$.}
\label{fig:SURF4}
\end{figure}

\begin{figure}[!htbp]
\begin{subfigure}{.45\textwidth}
\centering
\includegraphics[scale=0.45]{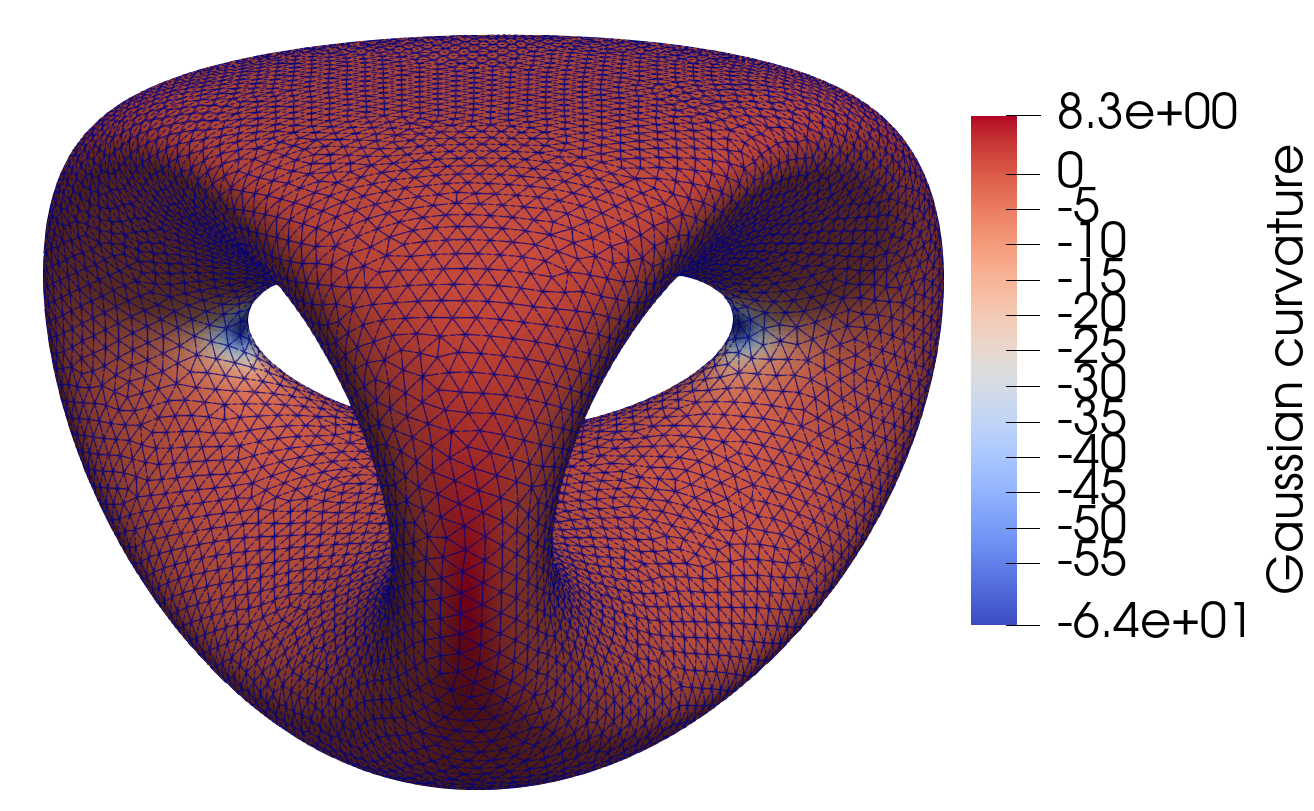}
\caption{Genus~2 surface, with $15\,632$ triangles}
\label{gc.d}
\end{subfigure}
\hfill
\begin{subfigure}{.495\textwidth}
 \includegraphics[width=1.0\textwidth]{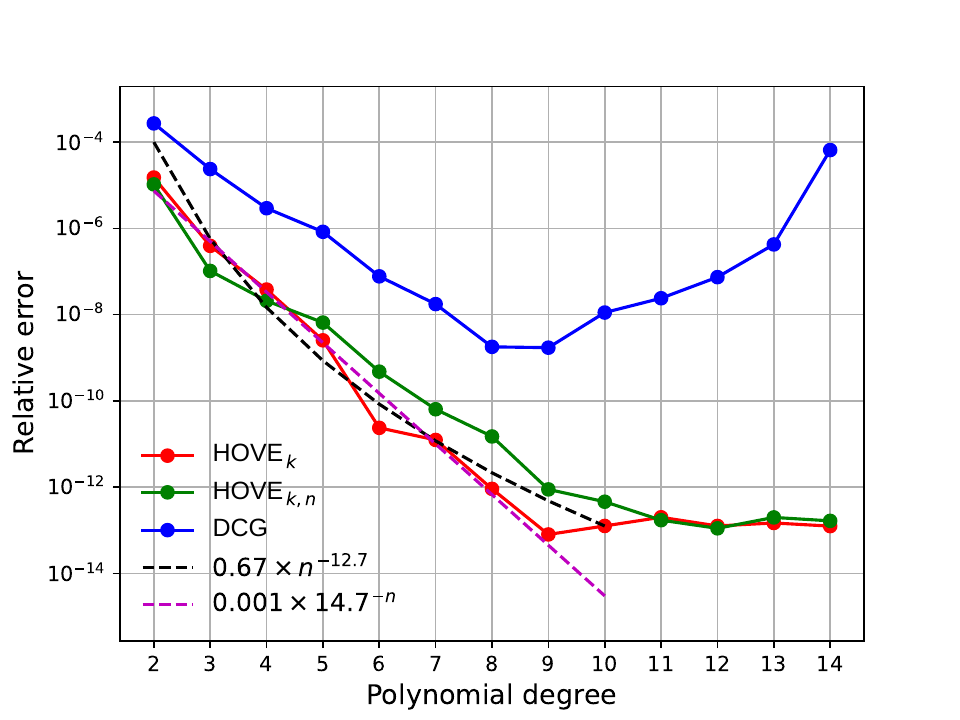}
  \caption{Genus-2 surface}
  \label{r1.e}
\end{subfigure}
\caption{Gauss--Bonnet validation for a genus~2 surface.}
\label{fig:SURF5}
\end{figure}

We keep the experimental design from Section~\ref{sec:SH} and plot the errors as functions of the polynomial degree in Fig.~\ref{fig:SURF1}--\ref{fig:SURF5}. 
Both $\text{HOVE}_{k}$ and $\text{HOVE}_{k,n}$ rapidly converge with exponential rates to the correct value $2\pi\chi(S)$, except for the thin ellipsoid in Fig.~\ref{gc.c}, where both reach super-algebraic rates. In contrast,
DCG fails to reach machine-precision approximations in all of the cases and becomes unstable when using interpolation degrees $k$ larger than $8$.

\subsection{A geometry with a near-singularity}
\label{sec:BC}
\begin{figure}
\begin{subfigure}{.45\textwidth}
\centering
\includegraphics[scale=0.45]{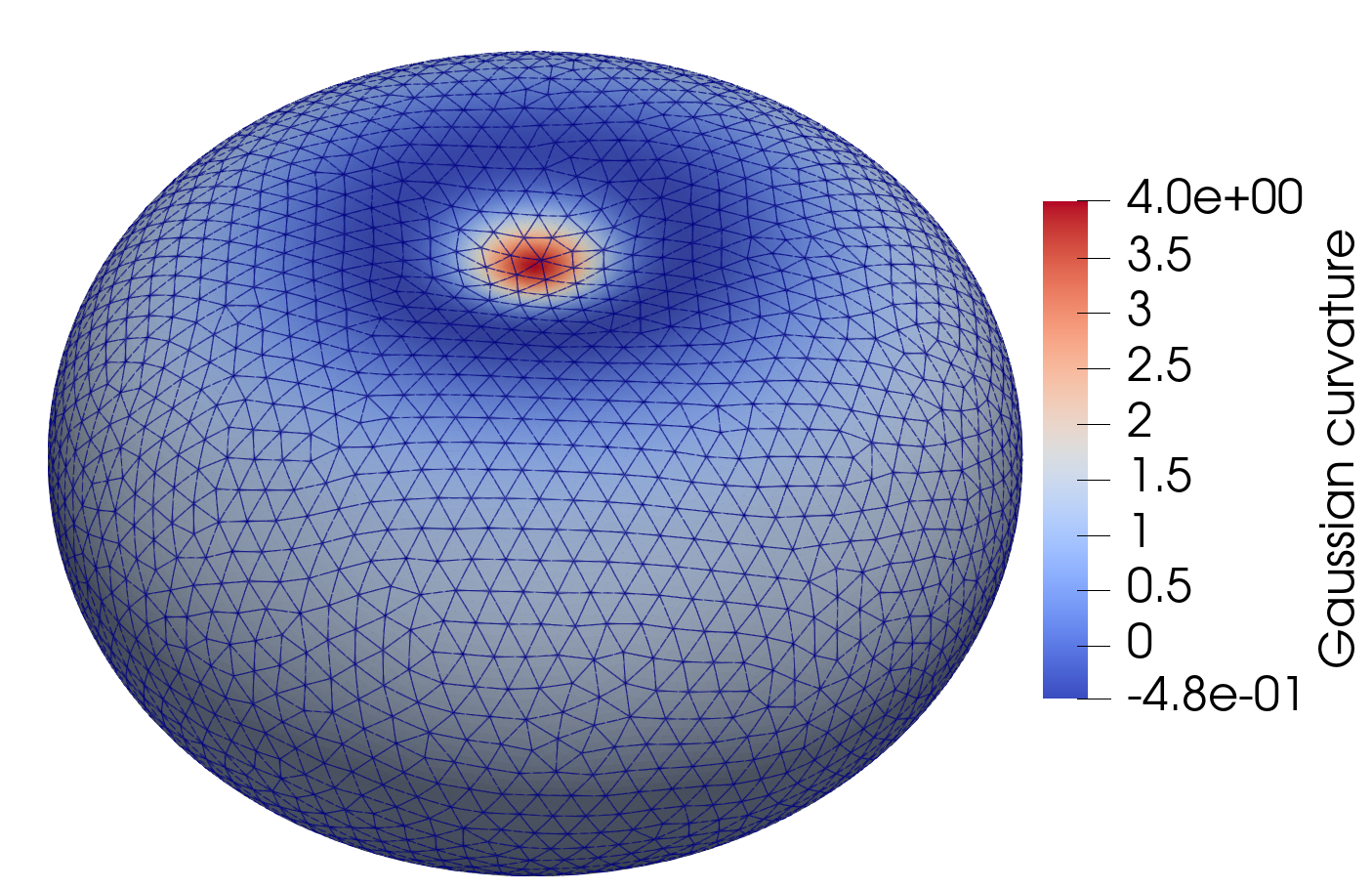}
\caption{Biconcave disc $c=-0.934$, $d=0.80$, with  $5980$ triangles.}
\label{gc.a}
\end{subfigure}%
\hfill
\begin{subfigure}{.5\textwidth}
  \includegraphics[clip,width=1.0\columnwidth]{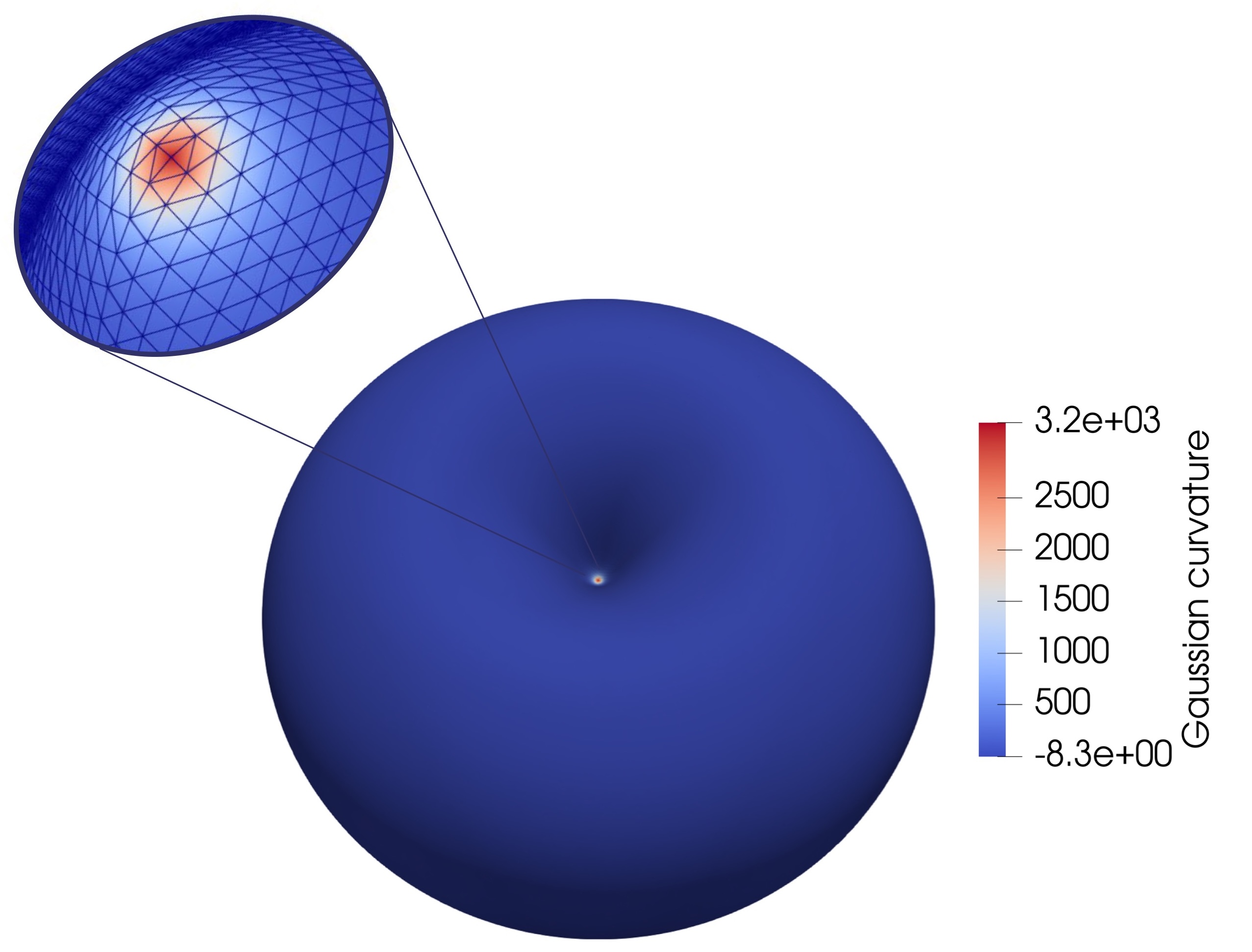}
  \caption{Biconcave disc $c=0.375$, $d=0.5$ with $3144$ triangles.}
  \label{B1}
\end{subfigure}%
\caption{Gauss--Bonnet validation for biconcave discs approaching a vertex singularity. The Gauss curvature
ranges between $[-4.8\cdot10^{-1}]$  and $[4.0]$, Fig.~\ref{gc.a}, and $[-8.3]$
and $[3.2\cdot10^{3}]$, Fig.~\ref{B1}}
\label{fig:biconcave_discs}
\end{figure}

The geometries of the previous section have all been well-behaved.
In contrast, in this section we now test HOVE
on a surface that is close to being singular.
For this, we consider the biconcave discs shown
in Fig.~\ref{fig:biconcave_discs},
which are the zero sets of the polynomial
\begin{equation*}
 P_\text{bicon}(x,y,z) = (d^2 + x^2 + y^2 + z^2)^3 - 8d^2(y^2 + z^2) - c^4,
 \qquad 
 c<d \in \R\setminus\{0\}.\\
\end{equation*}
As long as the parameters $c,d$ are chosen such that $0 \not \in P_\text{bicon}$ the surfaces are smooth. 
We consider the two cases $c=-0.934$, $d=0.8$
and $c=0.375$, $d=0.5$, for which the Gauss curvature
ranges between $[-4.8\cdot10^{-1}]$ and $[4.0]$, and $[-8.3]$
and $[3.2\cdot10^{3}]$, respectively, see Fig.~\ref{B1}. In the latter case, the Gauss curvature increases rapidly by four orders of magnitude when approaching the center, mimicking cone-like singularities \cite{goodwill2021numerical} as a challenge for
high-accuracy integration.


 \begin{figure}[!htp]
\begin{subfigure}{.495\textwidth}
  \includegraphics[width=1.0\textwidth]{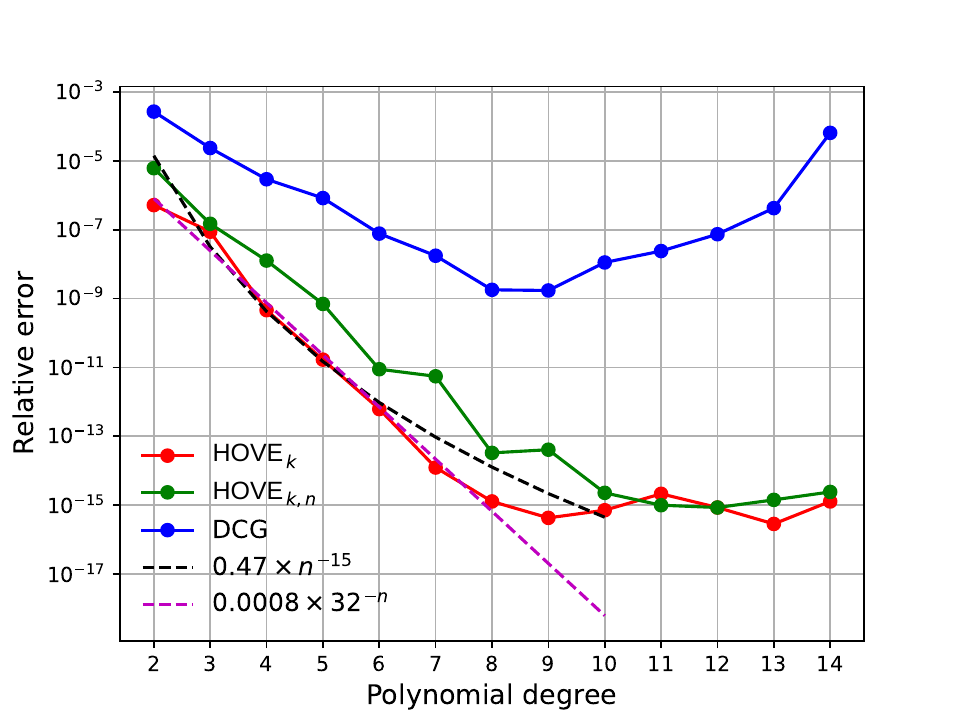}
  \caption{Biconcave disc,  $c=-0.934$, $d=0.8$}
  \label{fig:error_low_curvature_biconcave_disc}
\end{subfigure}%
\begin{subfigure}{.495\textwidth}
  \includegraphics[clip,width=1.0\textwidth]{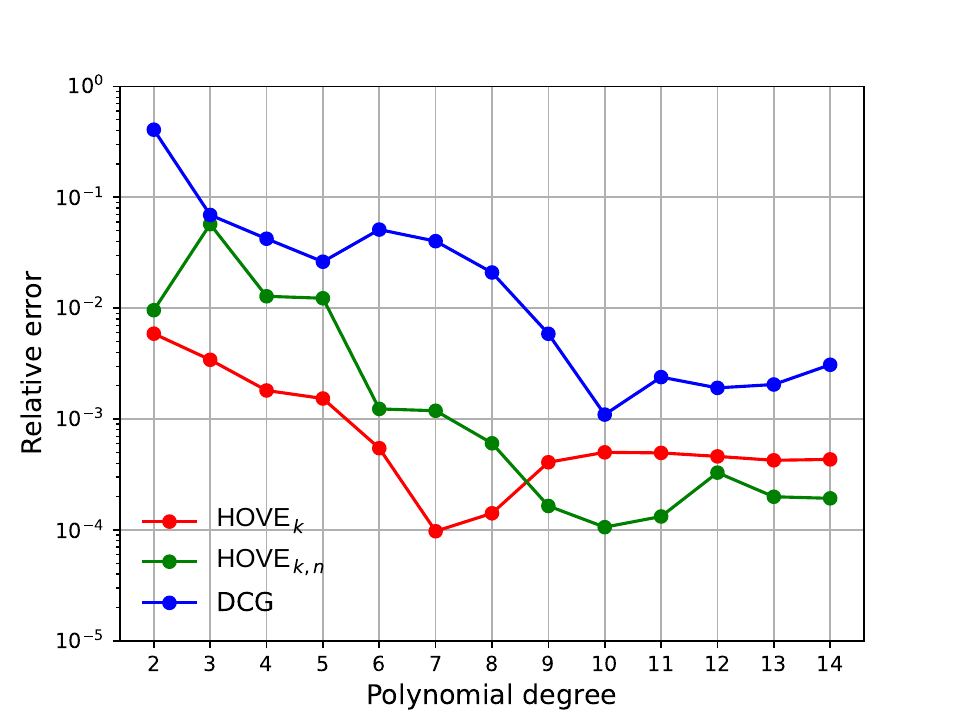}
  \caption{Biconcave disc, $c=0.375$, $d=0.5$}
  \label{fig:error_high_curvature_biconcave_disc}
\end{subfigure}
\caption{Gauss--Bonnet validation for  HOVE and DCG in case of biconcave discs.}
\label{fig:error_biconcave_discs}
\end{figure}

Fig.~\ref{fig:error_low_curvature_biconcave_disc}
shows the Gauss--Bonnet results for the low-curvature case of Fig.~\ref{gc.a}, with $\text{HOVE}_{k}$, $\text{HOVE}_{k,n}$ as in Section~\ref{sec:SH}.
Both HOVE and DCG converge exponentially up to $\deg =9$, but DCG has a slower rate, resulting in  5~orders of magnitude higher precision
for HOVE. For higher orders, the HOVE
error tends to plateau close to a machine precision level.
As in the earlier experiments, DCG becomes unstable in this high-order range.

In the high-curvature case of Fig.~\ref{B1}, none of the approaches reaches machine precision accuracy. 
One may hope that the integration error for such
a near-singular integrand and geometry reduces when applying a mesh $h$-refinement strategy. To test this,
we recompute the integral for the high-curvature case of Fig.~\ref{B1},
on a finer mesh with $50\,304$ triangles.
Fig.~\ref{B2} shows that this leads
to an improvement for HOVE and DCG that, however, still does not reach machine precision accuracy. Notably, HOVE performs up to three orders of magnitude better than DCG and exhibits consistent stability
even for high interpolation degrees.

\begin{figure}[!htp]
\begin{subfigure}{.495\textwidth}
  \centering
 \includegraphics[clip,width=1.0\columnwidth]{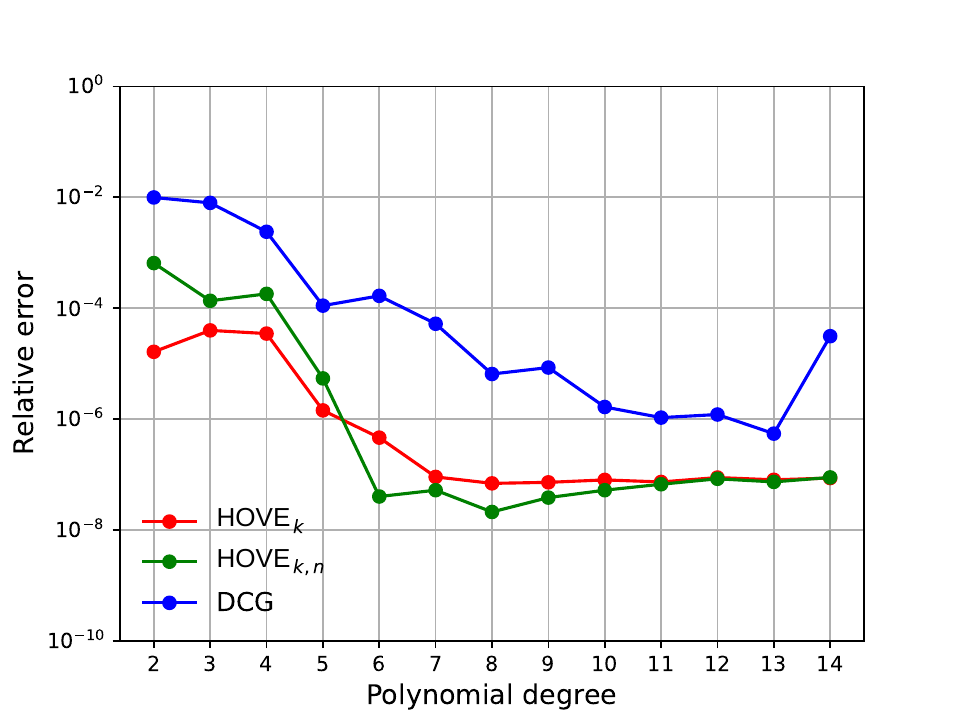}
  \caption{$h-$\text{refinement}}
  \label{B2}
\end{subfigure}
\begin{subfigure}{.495\textwidth}
  \includegraphics[width=1.0\textwidth]{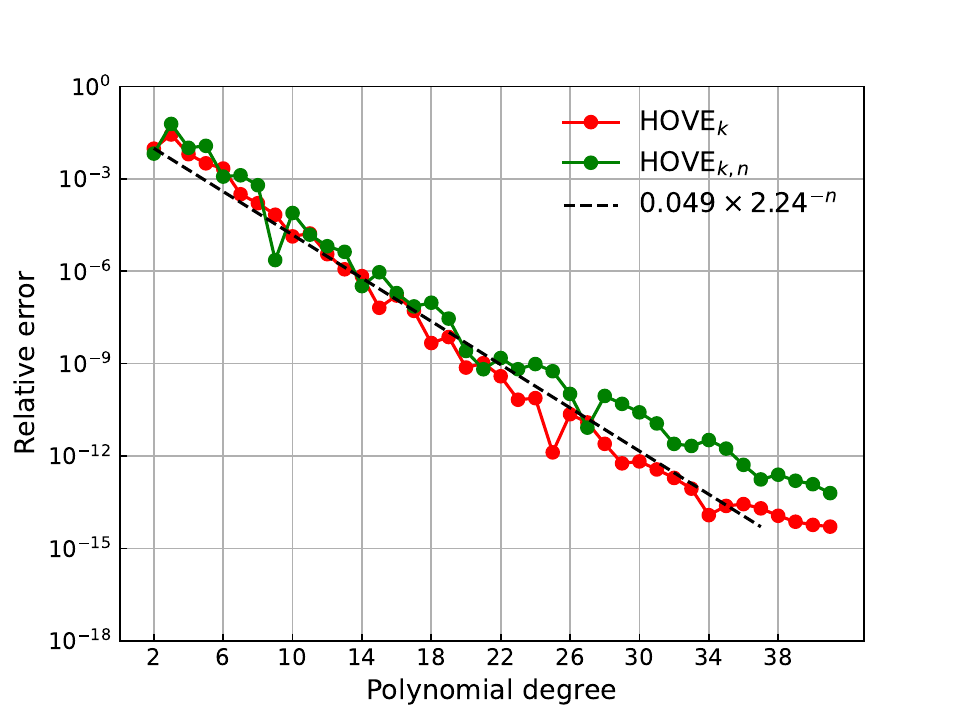}
  \caption{$p-$\text{refinement}}
  \label{B3}
\end{subfigure}
\caption{Gauss--Bonnet validation for the biconcave disc of Fig.~\ref{B1} following a h-refinement strategy with $50\,304$ triangles (\ref{B3}) and a  p-refinement strategy for  $3144$ triangles (\ref{B3}). }
\label{fig:R_bioc}
\end{figure}
As HOVE imposes no restrictions on the polynomial degree, we revisit the initial mesh, consisting of $3144$ triangles, and increase the geometry approximation degree up to $k =1 ,\dots,40$. To simplify the integration process, we employ a tensorial Gauss-Legendre quadrature rule of order $k$ equally to  the  interpolation  degrees $k=n$. 

Fig.~\ref{B3} shows an exponential approximation rate of HOVE until reaching machine precision, conducting the p-refinement. As initially announced in Section~\ref{sec:contr}, this validates HOVE's effectiveness in addressing high variance integration tasks, approaching weak vertex singularities that cannot be resolved by h-refinements.

\begin{figure}[!htp]
\centering
\begin{subfigure}{.5\textwidth}
\centering
\includegraphics[clip,width=0.74\columnwidth]{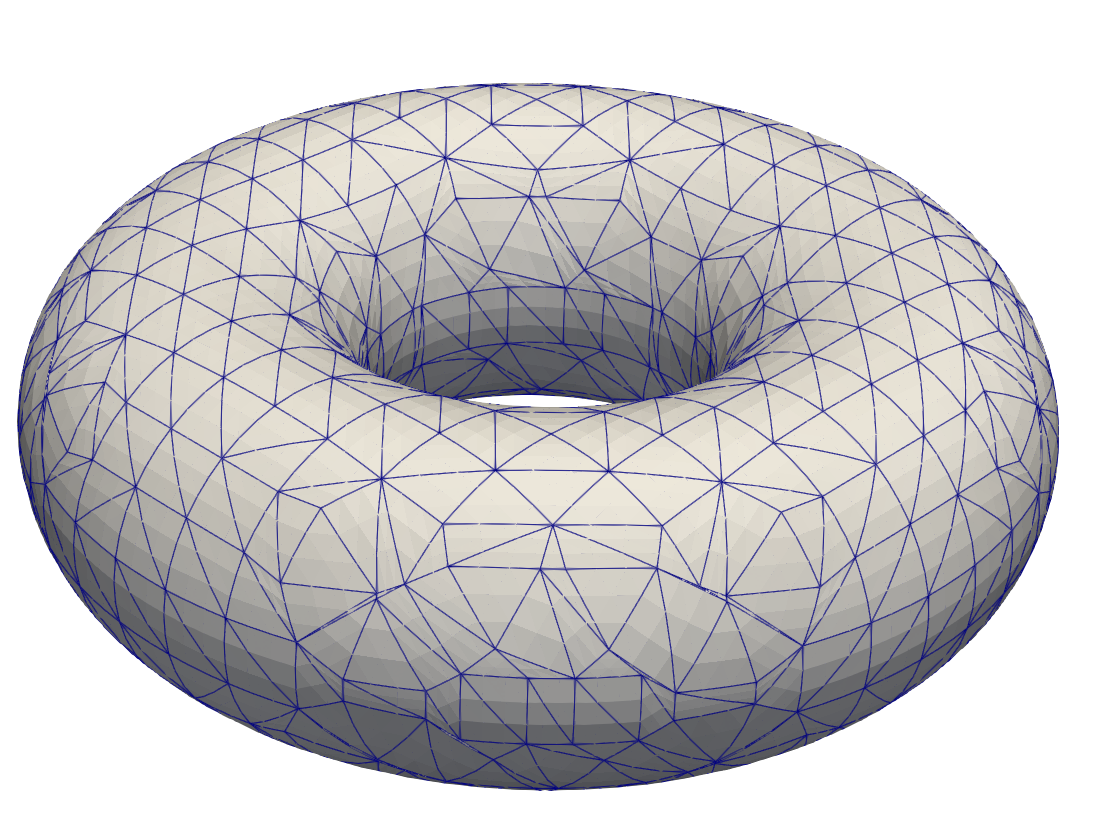}
  \hfill
  \caption{Low-quality mesh with folded triangles}
  \label{f.tri}
\end{subfigure}%
\hfill
\begin{subfigure}{.5\textwidth}
\centering
 \includegraphics[clip,width=0.77\textwidth]{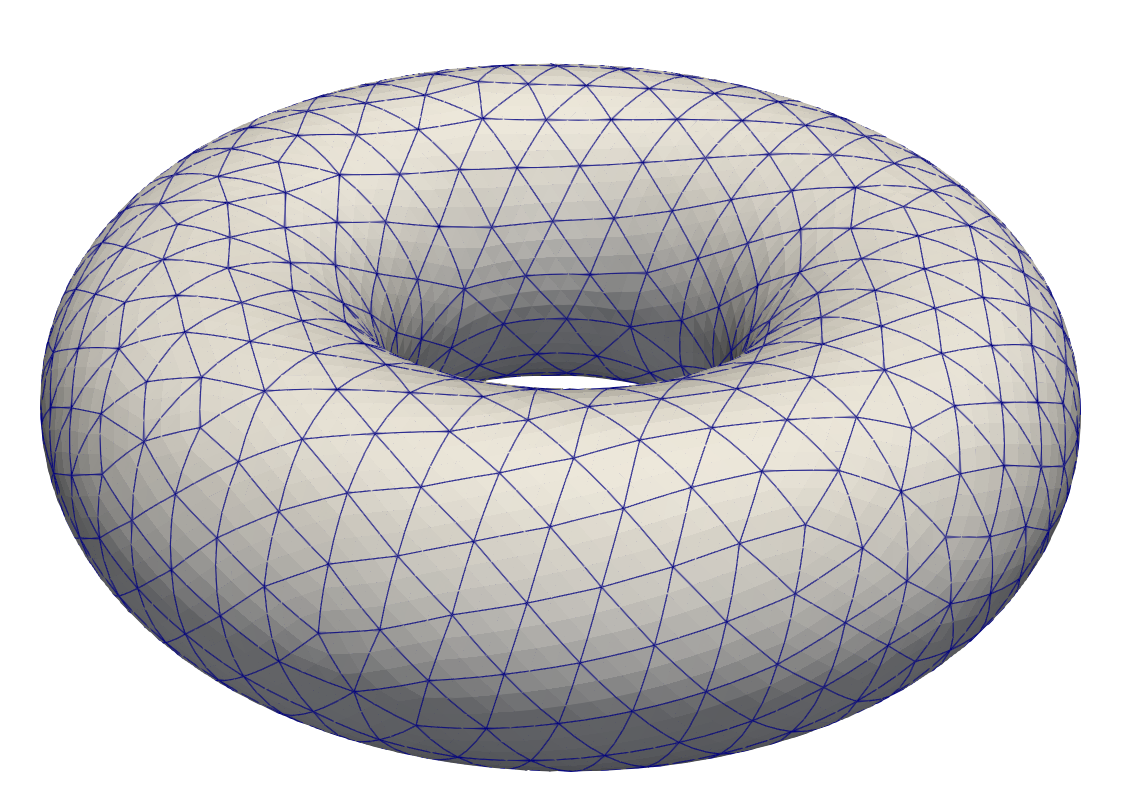}
 \hfill
  \caption{High-quality mesh}
  \label{op.triangles}
\end{subfigure}
\hfill
\begin{subfigure}{.5\textwidth}
\centering
  \includegraphics[clip,width=1\columnwidth]{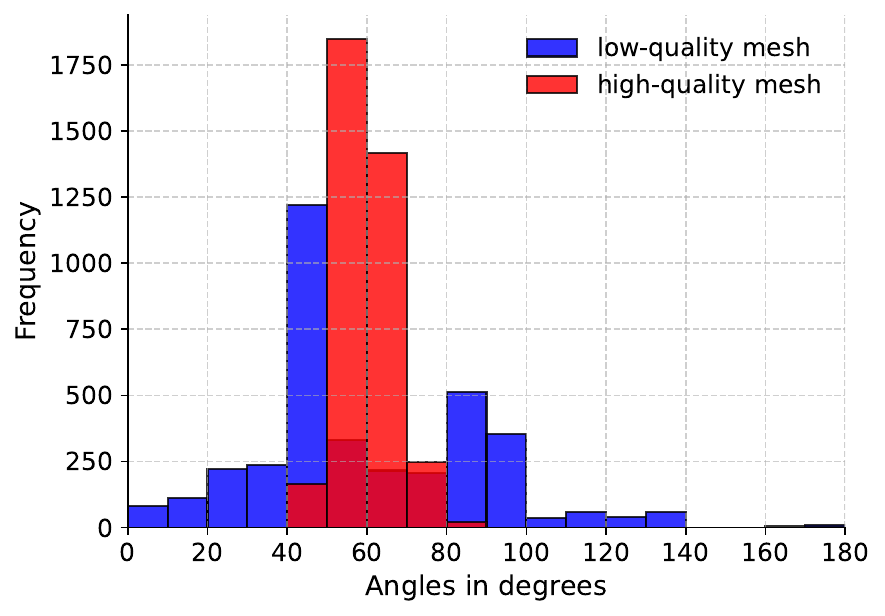}
  \hfill
  \caption{Angles of the two meshes}
  \label{dis.tri}
\end{subfigure}%
\begin{subfigure}{.5\textwidth}
\centering
 \includegraphics[clip,width=1\columnwidth]{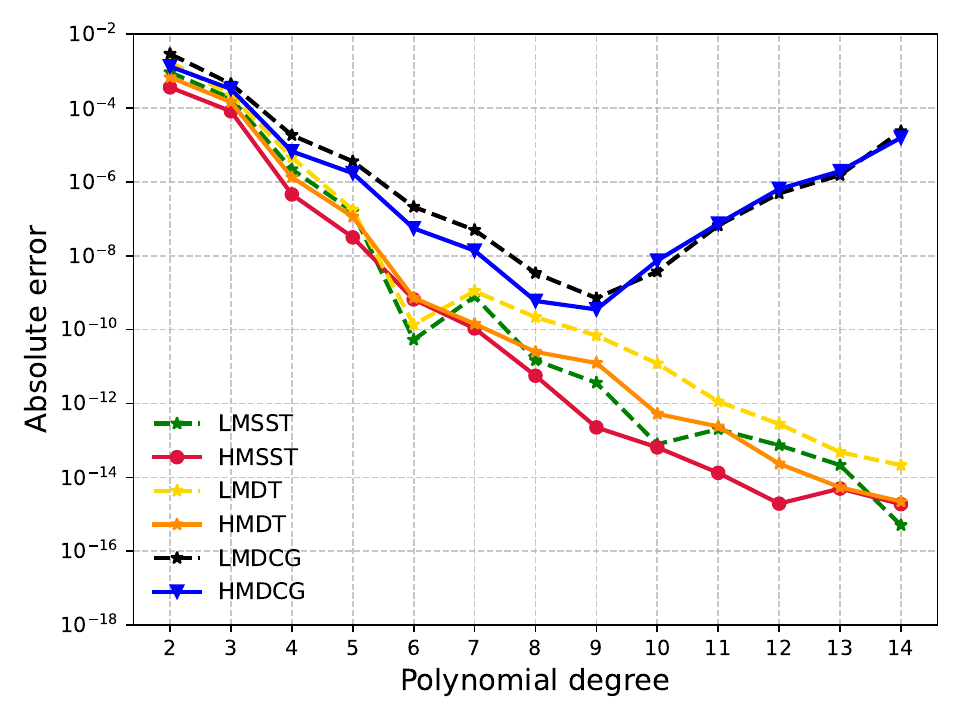}
 \hfill
  \caption{Integration errors}
  \label{r.error}
\end{subfigure}
\hfill
\caption{Integrating the Gauss curvature over
a torus with radii $r=1$, $R=2$ for low-quality and  high-quality meshes consisting of $1232$ triangles. Abbreviations:  LMSST/HMSST -- square-squeezing transform on low/high-quality mesh, LMDT/HMDT --  Duffy's transform on low/high-quality mesh, LMDCG/HMDCG -- DCG on low/high-quality mesh}

\label{fig:compare_meshes}
\end{figure}

\subsection{Mesh quality}

Since the integration error measured here involves in particular the error of approximating the geometry by polynomials it is reasonable to ask whether the integration error depends on the quality of the triangulation of $S$.
To investigate this, we repeat the Gauss--Bonnet validation one final time for the torus of Fig.~\ref{fig:US4}.

We generate two meshes for the torus geometry,
both with $1232$ triangles, shown in
Fig.~\ref{fig:compare_meshes} together with
a plot giving the distributions of the interior
angles. One of the grids is of high quality,
with all angles near $60^\circ$. The second mesh
was deliberately constructed to be of low quality,
featuring a wide range of angles, and even
triangles with inverted orientation.

We evaluate the performance of DCG and HOVE based on square-squeezing and on Duffy's transformation, when exploiting (pull-back) quadrature rules of order $14$.

The results are given in Fig.~\ref{r.error}.
Neither HOVE nor DCG seem to
seriously depend on the mesh quality.
As usual, HOVE converges faster than DCG, and it
converges all the way to the machine precision
limit. DCG shows the same behavior as in all prior experiments, 
becoming unstable for geometry approximation orders
beyond $8$.

\section{Outlook}\label{sec:CON}
For extending the HOVE to integration tasks on non-parameterized surfaces, we aim to use the \emph{global polynomial level set} method (GPLS) \cite{GPLS}, developed by ourselves. GPLS delivers the required  machine--precision--close implicit parameterization $S = l^{-1}(0) $ for a broad class surfaces $S$, being only known in a set of sample nodes.
In combination with the regression techniques in \cite{REG_arxiv} this will enable the computation of surface integrals if in addition, the integrand is only known at a priori given data points.


Our quadrilateral re-parameterizarion due to square-squeezing suggests that the proposed method has the potential to substantially contribute  to triangular spectral element methods (TSEM) \cite{karniadakis2005spectral,heinrichs2001spectral}, realizing fast spectral PDE solvers on surfaces \cite{fortunato2022high}.

\section*{Acknowledgement}
This research
was partially funded by the Center of Advanced Systems Understanding (CASUS), which is financed by Germany’s Federal Ministry of Education and Research (BMBF) and by the Saxon Ministry for Science, Culture, and Tourism (SMWK) with tax funds on the basis of the budget approved by the Saxon State Parliament. The authors gratefully acknowledge the associated editor and the anonymous referees for their valuable comments and constructive suggestions, which helped improve this paper.



\bibliographystyle{siamplain}
\bibliography{Ref.bib}
\end{document}